\newtheorem{thm}{Theorem}[section]
\newtheorem{prop}[thm]{Proposition}
\newtheorem{lem}[thm]{Lemma}
\newtheorem{cor}[thm]{Corollary}
\theoremstyle{definition}
\newtheorem{defn}[thm]{Definition}
\theoremstyle{remark}
\newtheorem{remk}[thm]{Remark}
\newtheorem{remks}[thm]{Remarks}
\newtheorem{exm}[thm]{Example}
\newtheorem{exms}[thm]{Examples}
\newtheorem{notat}[thm]{Notation}
\numberwithin{equation}{section}
\newcommand{\thmref}{Theorem~\ref}
\newcommand{\Q}{{\mathbb Q}}
\newcommand{\Z}{{\mathbb Z}}
\newcommand{\surj}{\twoheadrightarrow}
\newcommand{\inj}{\hookrightarrow}
\newcommand{\ab}{{\rm ab}}
\newcommand{\tor}{{\rm tor}}
\newcommand{\geom}{{\rm geom}}
\newcommand{\Ker}{{\rm Ker}}
\newcommand{\Coker}{{\rm Coker}}
\newcommand{\Spec}{{\rm Spec}}
\newcommand{\Tr}{{\rm Tr}}
\title[Finiteness of torsion subgroup]{Torsion in abelian fundamental group and its application}
\author{Rahul Gupta}
\address{Institute of Mathematical Sciences, A CI of Homi Bhabha National Institute, 4th Cross St., CIT Campus, Tharamani, Chennai,
  600113, India.} 
  \email{rahulgupta@imsc.res.in}
\author{Jitendra Rathore}
\address{Department of Mathematics, University of California, Santa Barbara, CA 93106, USA.}
\email{jitendra@ucsb.edu}
\date{\today}
\keywords{Abelian \'etale fundamental group, class field theory, local fields}
\subjclass[2020]{Primary 14G20; Secondary 14H30, 14C35.}
\begin{document}

\begin{abstract}
    We prove that the torsion subgroup of the abelian fundamental group is finite for a regular geometrically integral projective variety over a local field. We also study the structure of $SK_1(X)$ for a regular projective variety $X$ over a local field. As an application, we get class field theory for regular projective curves over local fields. 
\end{abstract}

\maketitle

\setcounter{tocdepth}{1}
%\maketitle
\tableofcontents

\section{Introduction} \label{sec:Intro}

	 The {\'e}tale fundamental group of an integral normal variety plays an important role in studying various problems in arithmetic geometry.  The maximal abelian profinite quotient of this group, known as the abelianized {\'e}tale fundamental group, is one of the main objects of study in geometric class field theory. This group
	has been extensively studied for smooth projective varieties 
       over finite and local fields throughout the decades (see \cite{KL81}, \cite{Bloch-Annals}, \cite{KS-UCFT}, \cite{Saito85}, \cite{Jannsen-Saito}, \cite{Yos03}, \cite{Forre-Crelle}, \cite{Hira-I}, and  \cite{TCFT-arXiv}).
       In this paper, we study this group for a regular, not necessarily smooth, projective variety over a local field of positive characteristic.
	
	Let $X$ be an integral normal scheme over a field $k$. We denote by $\pi^{\ab}_{1}(X)$ the abelianized {\'e}tale fundamental group of $X$. Recall that $\pi^{\ab}_{1}(X)$  classifies all finite abelian {\'e}tale coverings over $X$.  The objective of this paper is to study the torsion subgroup of this group for a regular geometrically integral quasi-projective variety over a local field. 
	We let $ \pi^{\ab}_{1}(X)^{\geom} : = \Ker ( \pi^{\ab}_{1}(X) \rightarrow \pi^{\ab}_{1}(\Spec(k)) )$, which is called the geometric part of the group $\pi^{\ab}_{1}(X)$. 
	In the case where $k$ is finite or local field, the local class field theory provides an explicit structure for the group $\pi^{\ab}_{1}(\Spec(k))$ and therefore the study of  $\pi^{\ab}_{1}(X)$ in these cases is reduced to the study of the group  $ \pi^{\ab}_{1}(X)^{\geom}$.
	
	Let $k$ be a finite field of characteristic $p$ and let $X$ be a smooth geometrically connected projective scheme over $k$. Then the group $ \pi^{\ab}_{1}(X)^{\geom}$ is known to be finite by Katz-Lang \cite{KL81}. More generally, if $X$ is smooth geometrically connected finite type scheme over $k$, then $ \pi^{\ab}_{1}(X)^{\geom}$ is finite, except for its pro-$p$ subgroup.

	Let us now assume that $k$ is a local field, i.e., a complete, discrete-valued field with finite residue field. Let $X$ be a smooth geometrically connected projective scheme over $k$. Then the group $ \pi^{\ab}_{1}(X)^{\geom}$ is not necessarily finite but it is known to be a topologically finitely generated profinite group. More precisely, we have 
	$ \pi^{\ab}_{1}(X)^{\geom} \cong F \oplus \widehat{\mathbb{Z}}^{r}$, where $F$ is a finite group and $r\geq 0$. This structure theorem was proven by Grothendieck \cite{SGA1} and Yoshida \cite{Yos03}. This result was recently extended to 
	smooth quasi-projective varieties having smooth compactifications (except for the pro-$p$ subgroup) \cite [Theorem 1.1]{TCFT-arXiv}. 
	These results, in particular, imply that the torsion subgroup $\pi^{\ab}_{1}(X)_{{\rm tor}}$ of the abelian fundamental group is finite for smooth projective varieties and the torsion subgroup except the $p$-primary torsion is finite in the case of smooth quasi-projective varieties.

	The main objective of this paper is to extend these finiteness results to the case of regular (not necessarily smooth), projective (possibly quasi-projective), geometrically connected varieties over local fields of positive characteristic. Such questions do not arise over finite fields, as regular quasi-projective schemes over finite fields are smooth. One of the consequences of the finiteness of the group  $\pi^{\ab}_{1}(X)_{{\rm tor}}$ is that it plays a key role in the study of class field theory for $X$ (see \cite{Jannsen-Saito}, \cite{Forre-Crelle}, \cite{Hira-I}, and  \cite{TCFT-arXiv}). As an application of such a finiteness result for  regular varieties, we extend the results of class field theory for smooth projective curves to regular projective %(possibly quasi-projective)
	curves over local fields.
	
	We now state our main results.
	
	\begin{thm} \label{thm:A}
		Let $k$ be a local field of positive characteristic $p >0$ and let $X$ be a geometrically integral scheme over $k$.
		\begin{enumerate}
			\item If $X$ is regular and projective, then  $\pi^{\ab}_{1}(X)_{{\rm tor}}$ is finite.
			
			\item  If $X$ is regular and quasi-projective, then  $\pi^{\ab}_{1}(X)_{{\rm tor}}$ (except for  $p$-torsion) is finite.
		\end{enumerate}
		\end{thm}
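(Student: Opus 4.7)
The plan is to reduce the regular projective case (part~1) to the smooth projective case handled by the Grothendieck--Yoshida structure theorem recalled in the introduction, via de Jong's alteration theorem; for part~(2) one passes to a projective compactification and controls the boundary.

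Concretely, I would first invoke de Jong's theorem to produce a finite separable extension $k'/k$, a smooth projective geometrically integral variety $Y$ over $k'$, and a proper surjective generically \'etale morphism $f \colon Y \to X_{k'} := X \otimes_k k'$ of generic degree $d$. By Yoshida's theorem, $\pi_1^{\ab}(Y)^{\geom} \cong F \oplus \widehat{\Z}^r$, so $\pi_1^{\ab}(Y)_{\tor}$ is finite and $\pi_1^{\ab}(Y)$ is topologically finitely generated. Choosing a nonempty open $U \subseteq X_{k'}$ on which $f$ restricts to a finite \'etale cover of degree $d$, the \'etale transfer gives $d \cdot \pi_1^{\ab}(U) \subseteq f_* \pi_1^{\ab}(f^{-1}(U))$; by regularity these groups surject onto $\pi_1^{\ab}(X_{k'})$ and $\pi_1^{\ab}(Y)$ respectively, so $\Coker\bigl(f_* \colon \pi_1^{\ab}(Y) \to \pi_1^{\ab}(X_{k'})\bigr)$ is annihilated by $d$. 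Granting that $\pi_1^{\ab}(X_{k'})^{\geom}$ is topologically finitely generated (a standard property for proper schemes over a field), this $d$-torsion cokernel is finite; the image $f_*\pi_1^{\ab}(Y)$ is a topological quotient of $\pi_1^{\ab}(Y)$ and so also has finite torsion. The resulting exact sequence
\[
0 \to (f_*\pi_1^{\ab}(Y))_{\tor} \to \pi_1^{\ab}(X_{k'})_{\tor} \to \Coker(f_*)
\]
then forces $\pi_1^{\ab}(X_{k'})_{\tor}$ to be finite, and a parallel transfer argument applied to $X_{k'} \to X$ descends along the finite separable $k'/k$ to yield part~(1).

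For part~(2), take a projective compactification $\bar X \supseteq X$ (arranged to be regular, perhaps after a further refinement), apply (1) to get $\pi_1^{\ab}(\bar X)_{\tor}$ finite, and control the prime-to-$p$ torsion in $\pi_1^{\ab}(X)$ via tame ramification along $\bar X \setminus X$. Wild ramification along the boundary is the source of potentially infinite $p$-torsion, explaining the $p$-exclusion. The main obstacle is verifying topological finite generation of $\pi_1^{\ab}(X_{k'})^{\geom}$, equivalently the finiteness of $\Coker(f_*)$: routine for smooth proper schemes via Grothendieck--Katz type results but delicate in the regular non-smooth setting, where the usual smoothing arguments do not apply. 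A secondary concern is arranging de Jong's theorem to produce $Y$ over a finite separable extension and with the needed generic \'etale property, so that the descent step is finite and the transfer argument applies cleanly.
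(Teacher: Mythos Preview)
Your proposal has a genuine gap at the step ``the image $f_*\pi_1^{\ab}(Y)$ is a topological quotient of $\pi_1^{\ab}(Y)$ and so also has finite torsion.'' Finiteness of torsion is \emph{not} inherited by topological quotients of profinite abelian groups: $\widehat{\Z}$ is torsion-free, yet the surjection $\widehat{\Z} = \prod_\ell \Z_\ell \surj \prod_\ell \Z/\ell$ has target with infinite torsion subgroup $\bigoplus_\ell \Z/\ell$. What \emph{is} inherited (for topologically finitely generated profinite abelian groups) is finiteness of the $\ell$-primary torsion for each fixed prime $\ell$; this suffices for the $p$-part, since $G_k^{\ab}\{p\}=0$, but not for the prime-to-$p$ part all at once. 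The paper separates the two cases for precisely this reason: the $p$-primary torsion is handled via a structure theorem for $\pi_1^{\ab}(X)^{\geom}$ (\propref{prop:Struc-geom-*1}), while the prime-to-$p$ torsion requires a different, cohomological argument (\thmref{thm:A.1}) based on Gabber's purity, showing that $H^2_Z(X,(\Q/\Z)')$ is finite and hence that $\pi_1^{\ab}(X)\{p'\}$ and $\pi_1^{\ab}(U)\{p'\}$ differ by a finite group for any dense open $U\subset X$ (\lemref{lem:AEFG-X-U}).

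Two further issues. The obstacle you flag---topological finite generation of $\pi_1^{\ab}(X_{k'})^{\geom}$, needed to turn your $d$-torsion cokernel into a finite one---is not assumed but bypassed: the paper proves directly, via elementary Galois theory (\lemref{lem:coker-Galois}, \lemref{lem:coker-pi-ab}), that for any generically finite dominant morphism between integral normal schemes the induced map on $\pi_1^{\ab}$ already has \emph{finite} cokernel, with no finite-generation input. And for part~(2), ``arranging $\bar X$ regular'' is unavailable in positive characteristic for lack of resolution of singularities. The paper instead takes a $p$-power-degree alteration (de~Jong--Gabber, via \cite{Tem17}) of a possibly singular compactification, so that push--pull on $(\Q/\Z)'$-cohomology is multiplication by $p^r$, hence an isomorphism; combined with \lemref{lem:AEFG-X-U} this reduces to the smooth projective case without ever producing a regular compactification.
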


	\subsection{Application}
	As mentioned earlier, the finiteness of the group  $\pi^{\ab}_{1}(X)_{{\rm tor}}$ plays
    a key role in the study of class field theory of $X$. In this light, 
	we apply Theorem~\ref{thm:A} to prove results in unramified class field theory for regular projective curves over local fields. To do this, we first study the group $SK_{1}(X)$ for an integral projective scheme $X$ over a local field $k$, which is defined as 
	\begin{center}
		$SK_{1}(X): = {{\rm coker }}   \Big(\oplus_{x \in X_{(1)}} K_{2}(k(x)) \rightarrow \oplus_{x \in X_{(0)}} K_{1}(k(x)) \Big)$.
	\end{center}
	We let	$V(X/k): = {{\rm Ker }}  \Big ( SK_{1}(X)  \xrightarrow{{{\rm Norm}}} k^{\times} \Big)$. These groups play a key role in studying the class field theory for smooth projective varieties over local fields via the reciprocity map $\rho_X \colon SK_1(X) \to \pi_1^{\ab}(X)$ and the induced map  $\rho_{X,0} \colon V(X/k) \to \pi^{\ab}_1(X)^{{\rm geom}}$.
	
	These groups were introduced by Bloch \cite{Bloch-Annals} to study the unramified class field theory for a certain class of regular schemes.
	It is shown in loc. cit. that when $X$ is an elliptic curve over a local field, we have $V(X/k) \cong T \oplus D$, where $T$ is a torsion group and $D$ is a divisible group  (\cite[Corollary 2.38]{Bloch-Annals}). This was later generalized to smooth projective curves and further extended to smooth projective varieties over local fields (see \cite{Saito85}, \cite{Yos03}, \cite{Jannsen-Saito}, and \cite{Forre-Crelle}).
 We extend this result to geometrically irreducible schemes over local fields of characteristic $p >0$ and prove the following.
	\begin{thm} \label{thm:B'}
		Let $X$ be a geometrically irreducible projective integral scheme of dimension $d \geq 0$ over a local field $k$ of positive characteristic $p >  0$. Then 
		\begin{equation} \label{eqn:thm-B'*-1}
			V(X/k) = T \oplus D,
		\end{equation}
		where $T$ is a torsion group and $D$ is a divisible group. 
	\end{thm}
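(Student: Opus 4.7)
The plan is to follow the framework of Bloch, Saito, Jannsen--Saito, and Forr\'e from the smooth projective case, with Theorem~\ref{thm:A} serving as the new ingredient that allows those arguments to extend to the regular (possibly non-smooth) setting. Since every divisible subgroup of an abelian group splits off as a direct summand (divisible abelian groups are injective in the category of abelian groups), the decomposition $V(X/k) = T \oplus D$ is equivalent to the quotient $V(X/k)/D_{\max}$ being a torsion group, where $D_{\max}$ denotes the maximal divisible subgroup of $V(X/k)$. My goal therefore reduces to showing that $V(X/k)$ is torsion modulo divisible.

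I would study the reciprocity map $\rho_{X,0} \colon V(X/k) \to \pi_1^{\ab}(X)^{\geom}$ and establish two things: (a) $\Ker(\rho_{X,0})$ is divisible, and (b) the image $V(X/k)/\Ker(\rho_{X,0})$ is torsion modulo its own maximal divisible subgroup. For (a), the smooth projective case is known, so I would try to reduce the regular case to the smooth case via a de Jong alteration $f \colon \tilde{X} \to X$ with $\tilde{X}$ smooth projective over a suitable finite extension of $k$. Functoriality of both $SK_1$ and of $\pi_1^{\ab}$ under norm and pullback, together with the identity $f_* f^* = \deg(f) \cdot \mathrm{id}$, lets one transfer the divisibility of $\Ker(\rho_{\tilde{X},0})$ to $\Ker(\rho_{X,0})$ up to $\deg(f)$-torsion; controlling this torsion is where Theorem~\ref{thm:A} enters, applied both to $X$ and to $\tilde{X}$. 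For (b), Theorem~\ref{thm:A} says $\pi_1^{\ab}(X)^{\geom}_{\tor}$ is finite, and combining this with the expected finitely generated profinite structure of $\pi_1^{\ab}(X)^{\geom}$ forces the image of $\rho_{X,0}$ to have the required torsion-plus-divisible structure after modding out by its maximal divisible subgroup.

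The main obstacle I anticipate is step (a) in the regular non-smooth setting: the alteration approach requires a careful analysis of the $\deg(f)$-torsion errors that arise when transferring divisibility from $\tilde{X}$ back to $X$, and this analysis relies essentially on a sharpened use of Theorem~\ref{thm:A}. An alternative route I would keep in mind is to adapt the Bloch--Kato--Merkurjev--Suslin type $K$-cohomological computations from the smooth proof directly to regular schemes, verifying that the relevant \'etale-cohomological inputs still hold in our setting. Once both (a) and (b) are in place, combining them shows that $V(X/k)/D_{\max}$ is a quotient of a torsion group, and the injectivity of divisible abelian groups yields the splitting $V(X/k) = T \oplus D$ stated in \eqref{eqn:thm-B'*-1}.
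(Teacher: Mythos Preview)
Your approach has two structural problems. First, Theorem~\ref{thm:B'} concerns a geometrically irreducible projective integral scheme $X$ with \emph{no regularity hypothesis}, so Theorem~\ref{thm:A} does not apply to $X$ and the reciprocity map need not have the properties you are relying on. Second, even restricting to regular $X$, your step (b) is circular in the paper's logic: you need ${\rm Image}(\rho_{X,0}) \subset \pi_1^{\ab}(X)^{\geom}$ to be torsion (it has no divisible part, being a subgroup of a profinite group), but knowing that $\pi_1^{\ab}(X)^{\geom}$ is topologically finitely generated with finite torsion says nothing about whether a given subgroup is torsion---consider $\Z \subset \widehat{\Z}$. In the paper the finiteness of ${\rm Image}(\rho_{X,0})$ (Theorem~\ref{thm:CF-Reg}(2)) is deduced \emph{from} Theorem~\ref{thm:B'}, via the fact that any homomorphism from a divisible group to a profinite group is zero, not the other way around.

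The paper's proof avoids both $\rho_X$ and Theorem~\ref{thm:A} entirely. It first reduces to curves by a Bertini argument: any finite set of closed points of $X$ lies on a geometrically irreducible integral curve $C \subset X$, so $V(X/k)$ is covered by the images of the various $V(C/k)$. For a curve $C$ one passes through the normalization $C^N$ to a smooth alteration $C'$ over a finite purely inseparable extension $k'/k$, and elementary norm computations show that each step has cokernel on $V(-/-)$ which is torsion of bounded exponent. Yoshida's structure theorem for the smooth curve $C'$ then gives the decomposition for $V(C/k)$, and hence for $V(X/k)$.
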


 The class field theory for smooth projective curve $X$ over a local field was first studied by Bloch \cite{Bloch-Annals} when $X$ has good reduction and this was later generalized to smooth projective geometrically connected curves by Saito \cite{Saito85} (prime to $p$-part) and by Yoshida \cite{Yos03} ($p$-part). Here, we extend these results to integral regular projective curves.  
	
	We let $k$ denote a local field of positive characteristic $p>0$ and let $X$ be a geometrically integral regular projective curve over $k$. We then have a reciprocity map (see \cite{Saito85}, 
	\cite[Section 1.5]{KS83} or \cite[Proof of Lemma~7.2]{TCFT-arXiv})
	\begin{equation} \label{eqn:Rec-map*-1}
		\rho_X \colon SK_1(X) \to \pi_1^{\ab}(X). 
	\end{equation}
	We let $\rho_{X,0} \colon V(X/k) \to \pi^{\ab}_1(X)^{{\rm geom}}$ denote the induced map. We then have the following result. 
	
	\begin{thm}\label{thm:CF-Reg}
		Let $X$ be as above. Then 
		\begin{enumerate}
			\item $\Ker(\rho_X)$ is divisible by every integer prime to $p$. Moreover, if we assume further that $X$ is generically smooth, then $\Ker(\rho_X)$ is the maximal divisible subgroup of $SK_{1}(X)$.
			\item ${\rm Image}(\rho_{X,0})$ is a finite group. 
            \item  $\Coker(\rho_X)_{\tor}$ is a finite group.
		\end{enumerate}
	\end{thm}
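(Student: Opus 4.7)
My plan is to treat the three parts separately, relying on Theorems~\ref{thm:A} and~\ref{thm:B'} as the main inputs together with the class field theorems of Saito and Yoshida for smooth projective curves. For part~(2), apply Theorem~\ref{thm:B'} to decompose $V(X/k) = T \oplus D$ with $T$ torsion and $D$ divisible. The key observation is that any divisible subgroup of a Hausdorff profinite abelian group is trivial: each of its elements lies in every open subgroup of finite index, and open subgroups intersect to zero. Applied to the image of $D$ inside the profinite group $\pi^{\ab}_{1}(X)^{\geom}$, this forces $\rho_{X,0}(D) = 0$. Hence $\mathrm{Image}(\rho_{X,0}) = \rho_{X,0}(T)$ is torsion and lies in $\pi^{\ab}_{1}(X)^{\geom}_{\tor} \subseteq \pi^{\ab}_{1}(X)_{\tor}$, finite by Theorem~\ref{thm:A}(1).

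For part~(1), I would show that for every $n$ coprime to $p$ the induced map $SK_{1}(X)/n \to \pi^{\ab}_{1}(X)/n$ is injective, which is equivalent to $n$-divisibility of $\Ker(\rho_X)$. Since $X$ is geometrically integral, $k(X)/k$ is separable, so $X$ is generically smooth and the smooth locus $U$ is open dense with complement a finite set of closed points having inseparable residue fields. On $U$, Saito's theorem for smooth curves gives the mod-$n$ injectivity, and I would transfer it to $X$ by comparing the reciprocity maps for $X$ and $U$ through a localization-type sequence controlled at the non-smooth points. For the moreover claim, first use the injectivity of $\rho_k \colon k^\times \to \pi^{\ab}_{1}(\Spec k)$ (LCFT) to conclude $\Ker(\rho_X) = \Ker(\rho_{X,0})$, then combine Theorem~\ref{thm:B'} with the prime-to-$p$ divisibility just established and Yoshida's $p$-primary analysis for the smooth locus to upgrade $\Ker(\rho_X)$ from divisible prime to $p$ to fully divisible, and hence to the maximal divisible subgroup of $SK_{1}(X)$.

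For part~(3), I would apply the snake lemma to the commutative diagram
\[
\begin{CD}
V(X/k) @>>> SK_{1}(X) @>>> N_X \\
@VV\rho_{X,0}V @VV\rho_XV @VV\rho_k|_{N_X}V \\
\pi^{\ab}_{1}(X)^{\geom} @>>> \pi^{\ab}_{1}(X) @>>> \pi^{\ab}_{1}(\Spec k)
\end{CD}
\]
whose rows are the defining short exact sequences, with $N_X \subseteq k^\times$ the image of the norm map (of finite index, since it contains $k^{\times I(X)}$ for the index $I(X)$ of $X$). Since $\rho_k$ is injective by LCFT, $\Ker(\rho_k|_{N_X}) = 0$, and the snake lemma yields a short exact sequence
\[
0 \to \pi^{\ab}_{1}(X)^{\geom}/\rho_{X,0}(V(X/k)) \to \Coker(\rho_X) \to \pi^{\ab}_{1}(\Spec k)/\rho_k(N_X) \to 0.
\]
The rightmost term is an extension of $\pi^{\ab}_{1}(\Spec k)/\rho_k(k^\times) \cong \widehat{\Z}/\Z$ (torsion-free and divisible) by the finite group $\rho_k(k^\times)/\rho_k(N_X)$, so its torsion is finite. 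By part~(2), the left term equals $\pi^{\ab}_{1}(X)^{\geom}$ modulo a finite subgroup; together with finiteness of $\pi^{\ab}_{1}(X)^{\geom}_{\tor}$ from Theorem~\ref{thm:A}(1), this makes its torsion finite (any torsion class lifts to $\pi^{\ab}_{1}(X)^{\geom}_{\tor}$). Hence $\Coker(\rho_X)_{\tor}$ is finite.

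The principal obstacle lies in the moreover part of~(1): transferring $p$-primary injectivity from $U$ to $X$ requires controlling the contribution of the inseparable closed points $Z = X \setminus U$ to both $SK_{1}$ and $\pi^{\ab}_{1}$, and this is where the full strength of Theorem~\ref{thm:B'} (beyond what already suffices for parts~(2) and~(3)) becomes indispensable.
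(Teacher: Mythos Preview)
Your arguments for parts~(2) and~(3) are correct and essentially match the paper. Your observation that a geometrically integral curve is automatically generically smooth (since $k(X)/k$ is then separable) is also correct, so the extra hypothesis in the ``moreover'' clause is in fact redundant.

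The genuine gap is in part~(1). Your plan to obtain injectivity of $SK_1(X)/n \to \pi_1^{\ab}(X)/n$ by passing to the smooth locus $U$ and invoking Saito (resp.\ Yoshida) does not work: those theorems are for smooth \emph{projective} curves, while $U$ is open, and neither $SK_1(U)$ nor the corresponding reciprocity statement is available in the form you need; there is no evident localization mechanism transferring mod-$n$ injectivity from $U$ back to $X$. The paper proceeds entirely differently: it proves a Poincar\'e--Tate duality directly for the \emph{regular} projective curve (Theorem~\ref{thm:PP-reg}) by comparing with a $p^r$-alteration $W\to C$ with $W$ smooth projective and using that $\pi_*\pi^*=p^r$ is invertible on $\Z/n$-coefficients; this yields Theorem~\ref{thm:Res-inj-mod-n}. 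For $n=p^m$ it invokes the logarithmic Hodge--Witt duality of \cite{KRS23} on regular curves (Theorem~\ref{thm:Res-inj-mod-p}). A second, smaller gap: injectivity of $\rho_{X,n}$ alone gives only $\Ker(\rho_X)\subset nSK_1(X)$, not $\Ker(\rho_X)\subset n\Ker(\rho_X)$, so it is not literally ``equivalent'' to $n$-divisibility of $\Ker(\rho_X)$. The paper bridges this via \cite[Lemma~7.7]{Jannsen-Saito} together with the finiteness of $\pi_1^{\ab}(X)_{\tor}$ from Theorem~\ref{thm:A}; you do not use Theorem~\ref{thm:A} in part~(1) at all.
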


\subsection{Idea of the proofs}

We prove \thmref{thm:A} in two steps. We first prove that the prime to $p$-torsion subgroup $\pi^{\ab}_1(X)\{p'\}$ is finite. The main ingredient to prove this is to show that for a dense open subset $U \subset X$,  
$\pi^{\ab}_1(X)\{p'\}$ is finite if and only if $\pi^{\ab}_1(U)\{p'\}$ is finite. We then use an alteration of degree $p^r$ of $X$ and the smooth case to get hold of a dense open subset $U$ such that $\pi^{\ab}_1(U)\{p'\}$ is finite. 
The second and final step is to show that the $p$-torsion subgroup 
$\pi^{\ab}_1(X)\{p\}$ is finite (see Theorem~\ref{thm:Main-thm-2}). 
 Since the abelian absolute Galois group $G^{\ab}_k$ has no $p$-torsion for local fields, Theorem~\ref{thm:Main-thm-2} follows if we show that the $p$-primary torsion subgroup of $\pi^{\ab}_1(X)^{\geom}$ is finite. This is achieved in Proposition~\ref{prop:Struc-geom-*1}.

We first reduce Theorem \ref{thm:B'} to the case when $X$ is a geometrically irreducible projective integral curve in Section~\ref{sec:Red-to-curve}. We then use the fact that for such a curve, there is a nice alteration map to get a stronger result where we can assume that torsion summand $T$ is of finite exponent (see Theorem~\ref{thm:B-curve}). 

In Section~\ref{sec:CFT}, we prove duality results for regular projective curves and use them to show that the reciprocity map modulo $n$ is injective for all $n \in \mathbb{Z}$, see Theorems~\ref{thm:Res-inj-mod-n} and~\ref{thm:Res-inj-mod-p}. As an application of Theorem~\ref{thm:A}, we then obtain Theorem~\ref{thm:CF-Reg} in Section~\ref{sec:Proof-CFT}.

\subsection{Notations} \label{sec:Not}

For an abelian group $A$ and $n\geq 1$, we let $A[n]$ denote the kernel of the multiplication map $A \xrightarrow{n} A$. For a prime $p$, we let the subgroup of prime to $p$-primary torsion $A\{p'\}:=  \cup_{{\rm gcd}(p, n)=1} A[n]$ and let 
$A\{p\}$ denote the $p$-primary torsion subgroup $\cup_{i}A[p^i]$. Similarly, for a fixed prime $p$, we say $A$ is $p'$-divisible (resp. $p$-divisible) if $A$ is divisible by all $n$ such that ${\rm gcd}(p, n)=1$ (resp. divisible by $p^i$ for $i\geq 1$). 
For a torsion group $A$ of finite exponent, we abuse the notation and say that it is of exponent $m$ if $mA =0$.  

For a field $k$, we let $G_k$ denote its absolute Galois group ${{\rm Gal}}(k^{{\rm sep}}/k)$, where $k^{{\rm sep}}$ is a separable closure of $k$. 
For a finite type scheme $X$ over a field and $i \geq 0$, we denote the set of $i$-dimension points of $X$ by $X_{(i)}$. For $y \in X_{(i)}$, we consider the closed subset $\overline{\{y\}} \subset X$ with its reduced induced structure and denote the resulting $i$-dimension integral closed subscheme by $C_y$. If $X$ is integral, we let $X^N \to X$ denote the normalization of $X$. 
For a scheme $X \to \Spec(k)$ and a field extension $l/k$, we let $X\times_k l $ denote the base change scheme $X \times_{\Spec(k)} \Spec(l)\to \Spec(l)$. Similarly, we use the notation $\phi \times_k l \colon Y\times_k l \to X\times_k l$ for the base change of morphism $\phi \colon Y \to X$ between schemes $X$ and $Y$ over $k$.

All cohomology groups considered in the paper are \'etale cohomology groups. 

\section{{\'e}tale cohomology groups} \label{sec:et-coho}

Let $k$ be a field of exponential characteristic $p \geq 1$ and let $X$ be a quasi-projective scheme over $k$. Recall that for $m \geq 1$ such that 
${{\rm gcd}}(p,m)=1$, the presheaf $\mu_{m, X}$ of $m$-th roots of unity is an \'etale sheaf on $X$. For $j \geq 0$, we let 
$\Z/m(j)_X = \mu_{m,X}^{\otimes j}$ and for $j<0$, we let
$\Z/m(j)= {\mathcal{H}om}(\Z/m(-j)_X, \Z/m)$. For a prime $l\neq p$ and $i, j \in \Z$, we let $H^i(X, \Q_l/\Z_l(j)) = \varinjlim_{n} H^i(X, \Z/l^n(j))$ and let 
\begin{equation} \label{eqn:def-et-coho}
    H^i(X, (\Q/\Z)'(j)) := \oplus_{l \neq p} H^i(X, \Q_l/\Z_l(j)),
\end{equation}
where $l$ runs over all prime integers different from $p$. 
For a closed subset $Z\subset X$, we similarly define support \'etale cohomology groups 
$ H^i_Z(X, (\Q/\Z)'(j))$. 
We begin by recalling the following result from \cite[\S~8]{Fuji02}.

\begin{lem} \label{lem:Semi-Purity}
    Let $Y$ be a regular finite type scheme over a field $k$ of dimension $d\geq 1$ and let $W\subset Y$ be a closed subset of codimension $c\geq 1$. Let $m$ be an integer that is invertible in $k$ and let $j\in \Z$.  We then have 
    \begin{equation} \label{eqn:SP-*0}
        H^i_W(Y, \Z/m(j) )= 0  \textnormal{ for all $i< 2c$}. 
    \end{equation}
\end{lem}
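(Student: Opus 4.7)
The plan is to prove the vanishing by Noetherian induction on $\dim W$, combined with Gabber's absolute cohomological purity (the main input of \cite{Fuji02}), which handles the case when the closed subset is itself regular. The starting observation is that although $W$ may be singular, over a field its regular locus is always dense, so a devissage onto this stratum and its complement will close the induction.

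First I would replace $W$ by its reduced scheme structure and let $V \subset W$ be its regular locus, which is a dense open regular subscheme since $W$ is of finite type over a field. Set $W' := W \setminus V$ equipped with the reduced induced structure. Because $V$ meets every generic point of $W$, one has $\dim W' < \dim W$; moreover every irreducible component of $W'$ lies in some component of $W$, so $\mathrm{codim}_Y(W') \geq \mathrm{codim}_Y(W) = c$.

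Next I would exploit the localization long exact sequence for the decomposition $V = W \setminus W'$ inside $Y$:
\begin{equation*}
\cdots \to H^i_{W'}(Y, \Z/m(j)) \to H^i_W(Y, \Z/m(j)) \to H^i_V(Y \setminus W', \Z/m(j)) \to \cdots.
\end{equation*}
In the right-hand term, $Y \setminus W'$ is a regular open subscheme of $Y$ and $V$ sits inside it as a regular closed subscheme whose irreducible components each have codimension $\geq c$. Gabber's absolute cohomological purity (applied component by component, which is legitimate because the connected components of a regular scheme are irreducible) then shows that $H^i_V(Y \setminus W', \Z/m(j))$ vanishes for $i < 2c$. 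For the left-hand term, the Noetherian induction hypothesis applied to $W'$ — which has strictly smaller dimension but still codimension at least $c$ in $Y$ — gives the same vanishing. The long exact sequence then forces $H^i_W(Y, \Z/m(j)) = 0$ for $i < 2c$, closing the induction; the base case $W = \emptyset$ is trivial.

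The only serious technical input is Gabber's absolute purity theorem from \cite{Fuji02}, which I take as a black box; the rest is a standard devissage. The main subtlety to watch is purely bookkeeping: $W$ need not be equidimensional, so codimensions must be tracked per component, but since $V$ is dense in $W$ each component of $V$ inherits the same codimension as the ambient component of $W$, and the minimum such codimension remains $c$, so the purity vanishing kicks in uniformly in the required range.
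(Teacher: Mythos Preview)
Your argument is correct: the Noetherian induction on $\dim W$, stratifying by the regular locus and invoking Gabber's absolute purity on the regular stratum, is the standard devissage proof of semi-purity. The paper does not give its own proof of this lemma at all --- it simply recalls the statement from \cite[\S~8]{Fuji02} --- and what you have written is essentially the argument found in that reference, so there is nothing to compare beyond noting that you have supplied the details the paper chose to cite.
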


Using Lemma \ref{lem:Semi-Purity} and the support-cohomology long exact sequence, we immediately get the following result. 

\begin{cor}\label{cor:Semi-purity}
    Let $Y$ be a regular finite type scheme over a field $k$ of dimension $d\geq 1$ and let $U\subset Y$ be an open subset such that its complement $Y \setminus U$ has codimension $c\geq 1$. Then the pull-back 
    $H^i(Y, (\Q/\Z)'(j)) \to H^i(U, (\Q/\Z)'(j)) $ is an isomorphism for all $i \leq 2c-2$ and $j \in \Z$. 
\end{cor}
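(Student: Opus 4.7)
The plan is to deduce the corollary by an essentially formal argument from Lemma~\ref{lem:Semi-Purity}, using the long exact sequence of étale cohomology with supports and then passing to a limit. The only real care is bookkeeping around the vanishing range.

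First, set $W := Y\setminus U$ with its reduced induced structure; by hypothesis, $W$ is a closed subset of $Y$ of codimension at least $c$. For a prime $l\neq p$ and any $n\geq 1$, the coefficient sheaf $\Z/l^n(j)$ is invertible on $Y$, so Lemma~\ref{lem:Semi-Purity} applies and yields
\[
H^i_W\bigl(Y, \Z/l^n(j)\bigr) \;=\; 0 \qquad \text{for all } i < 2c.
\]
I would then write the localization long exact sequence
\[
\cdots \to H^i_W\bigl(Y, \Z/l^n(j)\bigr) \to H^i\bigl(Y, \Z/l^n(j)\bigr) \to H^i\bigl(U, \Z/l^n(j)\bigr) \to H^{i+1}_W\bigl(Y, \Z/l^n(j)\bigr) \to \cdots
\]
For $i \leq 2c-2$, we have both $i < 2c$ and $i+1 \leq 2c-1 < 2c$, so the flanking support terms vanish. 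Hence the pull-back $H^i(Y, \Z/l^n(j)) \to H^i(U, \Z/l^n(j))$ is an isomorphism in that range.

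Finally, I would pass to the colimit over $n$, which is exact, to obtain an isomorphism on $\Q_l/\Z_l(j)$-cohomology for each prime $l \neq p$, and then take the direct sum over all such primes. Using the definition
\[
H^i\bigl(Y, (\Q/\Z)'(j)\bigr) \;=\; \bigoplus_{l\neq p} H^i\bigl(Y, \Q_l/\Z_l(j)\bigr)
\]
from \eqref{eqn:def-et-coho}, and likewise for $U$, this gives the desired isomorphism for $i \leq 2c-2$.

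There is no real obstacle here; the argument is a direct reduction to Lemma~\ref{lem:Semi-Purity}. The only minor subtlety is ensuring that semi-purity applies with the correct index on both sides of the long exact sequence, which forces the range $i\leq 2c-2$ rather than $i\leq 2c-1$, and verifying that $W$—a priori only the underlying closed set of the complement—can be used with its reduced structure without affecting the support cohomology, which is standard.
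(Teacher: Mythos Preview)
Your proposal is correct and is exactly the argument the paper has in mind: the corollary is stated as an immediate consequence of Lemma~\ref{lem:Semi-Purity} together with the support--cohomology long exact sequence, and your write-up simply spells this out with the appropriate passage to the limit and direct sum over primes $l\neq p$.
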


We use the above corollary to prove the finiteness of certain \'etale cohomology groups.

\begin{lem} \label{lem:et-como-finite*-1}
 Let $k$ be a local field and let $Z$ be a regular quasi-projective variety over $k$. Then the group $H^0(Z, (\Q/\Z)'(-1))$ is finite. 
\end{lem}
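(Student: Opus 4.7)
The plan is to reduce the computation to a Galois-invariants calculation over the field of constants of $Z$ and then compute directly. Since \'etale cohomology is compatible with finite disjoint unions, I may assume $Z$ is connected; because $Z$ is regular (hence normal) and connected, it is integral. Let $k' \subseteq k(Z)$ denote the algebraic closure of $k$ in the function field of $Z$. Since $k(Z)/k$ is finitely generated, $k'/k$ is finite, and therefore $k'$ is itself a local field. The structure morphism factors as $Z \to \Spec(k') \to \Spec(k)$, with $Z$ geometrically integral over $k'$.

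Geometric connectedness of $Z$ over $k'$ gives a surjection $\pi_1(Z) \twoheadrightarrow G_{k'}$. Since each sheaf $\Z/l^n(-1)$ on $Z$ is pulled back from $\Spec(k')$, this identifies
\[
H^0(Z, \Z/l^n(-1)) = (\Z/l^n(-1))^{G_{k'}}.
\]
Passing to the filtered colimit over $n$, and using that \'etale cohomology commutes with such colimits on quasi-compact quasi-separated schemes, I obtain
\[
H^0(Z, \Q_l/\Z_l(-1)) = (\Q_l/\Z_l(-1))^{G_{k'}}
\]
for each prime $l \neq p$.

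For such $l$, the cyclotomic character $\chi_l \colon G_{k'} \to \Z_l^{\times}$ is unramified (as $\mu_{l^\infty} \subset (k')^{\mathrm{unr}}$), with Frobenius acting on $\mu_{l^n}$ as multiplication by $q := \#\kappa(k')$. Hence $G_{k'}$ acts on $\Q_l/\Z_l(-1) \cong \Q_l/\Z_l$ via $\chi_l^{-1}$, and the invariants coincide with $\ker\bigl((q-1) \colon \Q_l/\Z_l \to \Q_l/\Z_l\bigr)$, a finite cyclic group of order $l^{v_l(q-1)}$. Since $q-1$ is a fixed nonzero integer, this vanishes for all but finitely many $l$, so
\[
H^0(Z, (\Q/\Z)'(-1)) \;=\; \bigoplus_{l\neq p} (\Q_l/\Z_l(-1))^{G_{k'}}
\]
is a finite group, as required.

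There is no serious obstacle in this argument; the entire proof is a routine computation, and the only items deserving care are the field-of-constants factorization (used to reduce the sheaf cohomology to Galois invariants of $G_{k'}$) and the standard verification that in a local field of residue characteristic $p$, the $l$-adic cyclotomic character is unramified for every $l \neq p$, so that the Frobenius eigenvalue $q$ controls the entire computation.
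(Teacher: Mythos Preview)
Your approach is correct and considerably more elementary than the paper's. The paper reduces to the smooth projective case by compactifying $Z$, taking a de Jong--Gabber alteration $W\to\overline{Z}$ of $p$-power degree, invoking semi-purity (Corollary~\ref{cor:Semi-purity}) to pass between $Z$, $W$, and suitable dense opens, and then using the trace identity $\pi_*\pi^*=p^r$ (an isomorphism on prime-to-$p$ coefficients) to embed $H^0(Z,(\Q/\Z)'(-1))$ into the finite group $H^0(W,(\Q/\Z)'(-1))$. You instead compute $H^0$ directly as $G_{k'}$-invariants via the field of constants, avoiding alterations and semi-purity entirely and even producing the explicit order $q-1$. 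The paper's machinery would of course be needed for higher $H^i$, but for $H^0$ your route is cleaner.

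One correction is needed in characteristic zero. The lemma as stated allows ${\rm Char}(k)=0$, in which case the exponential characteristic is $p=1$ and $(\Q/\Z)'$ runs over \emph{all} primes $l$. Your claim that $\chi_l$ is unramified then fails for $l=\ell_0$, the residue characteristic of $k'$: adjoining $\ell_0$-power roots of unity to an $\ell_0$-adic field is a ramified extension. The patch is painless---for that single prime the image of $\chi_{\ell_0}$ in $\Z_{\ell_0}^\times$ is open and in particular nontrivial, so the invariants still lie in a finite kernel; alternatively just observe that $(\Q_l/\Z_l(-1))^{G_{k'}}\cong\mu_{l^\infty}(k')$, which is finite for every prime $l$ and every local field $k'$.

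Minor quibble: the assertion that $Z$ is geometrically \emph{integral} over $k'$ is stronger than what you prove or need. Algebraic closedness of $k'$ in $k(Z)$ gives geometric irreducibility, hence geometric connectedness, and that is all the surjection $\pi_1(Z)\twoheadrightarrow G_{k'}$ requires.
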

\begin{proof}
    If $Z$ is regular projective variety over a local field, then the lemma follows from \cite[Lemma~10.9]{TCFT-arXiv}. Indeed, in the loc. cit., they proved that for a projective scheme $W$ of dimension $d \geq0$ over a local field and $i \geq 0$, the group $H^i(W, (\Q/\Z)'(j))$ is finite for all  $j \leq 0$ and for all $j \geq d+2$. 

    We now let $Z$ be a regular quasi-projective integral variety over $k$ and let 
    $Z \inj \overline{Z}$ be a compactification of $Z$.  Let $\pi\colon W \to \overline{Z}$ be an alteration morphism of de Jong and Gabber \cite{Tem17}. In particular, the morphism $\pi$ is generically finite of degree $e$, where $e =1$ if ${{\rm Char}}(k)=0$ and $e=p^r$ if ${{\rm Char}}(k)=p>0$. (Note that by a generically finite map of degree $1$, we mean a birational map.) 
    Let $V \inj \overline{Z}$ be a dense open subset of $\overline{Z}$ such that the induced map $\pi_{V'}\colon V' = \pi^{-1}(V) \to V$ is finite and flat of degree $e$. Since $Z$ is integral, the intersection $U = V \cap Z$ is a dense open subset of $Z$. We let $\pi_{U'} \colon U'=\pi^{-1}(U)\to U$ denote the induced map. Since $\pi_{V'}$ is finite and flat of degree $e$, it follows that the morphism $\pi_{U'}$ is also finite and flat of degree $e$. We consider the following diagram. 
\begin{equation}\label{eqn:finiteness*-1.1}
    \xymatrix@C.8pc{
          H^0(Z, (\Q/\Z)'(-1)) \ar[r]^-{\cong} & H^0(U, (\Q/\Z)'(-1)) \ar[d]^-{\pi_{U'}^*} \\
         H^0(W, (\Q/\Z)'(-1)) \ar[r]^-{\cong}& H^0(U', (\Q/\Z)'(-1)).}
\end{equation}    
Note that the horizontal arrows are isomorphisms by Corollary \ref{cor:Semi-purity} (with $c \geq 1$) because $U$ (resp. $U'$) is a dense open subset of $Z$ (resp. of $W$), and both $Z$ and $W$ are regular finite type schemes over $k$. Since $W$ is a regular projective scheme over a local field, it follows from the first paragraph that the bottom left group $H^0(W, (\Q/\Z)'(-1))$ is a finite group. It therefore suffices to show that the right vertical arrow $\pi_{U'}^* \colon  H^0(U, (\Q/\Z)'(-1)) \to  H^0(U', (\Q/\Z)'(-1))$ is injective. But this follows from \cite[XVII, Proposition~6.2.5]{SGA4}. Indeed, by the loc. cit., we have a map 
$(\pi_{U'})_*\colon H^0(U', (\Q/\Z)'(-1)) \to H^0(U, (\Q/\Z)'(-1))$ such that 
$(\pi_{U'})_* \circ \pi_{U'}^* = {{\rm deg}}(\pi_{U'}) {{\rm Id}} = p^r 
{{\rm Id}}$. Since multiplication by $p$ is an isomorphism on  \'etale cohomology with $(\Q/\Z)'(j)$-coefficients, it follows that the composition map $(\pi_{U'})_* \circ \pi_{U'}^*$ is an isomorphism. In particular, the right vertical arrow $\pi_{U'}^*$ in \eqref{eqn:finiteness*-1.1} is injective. This completes the proof. 
\end{proof}

We now prove the main result of the section. We use this result later to compare the abelian \'etale fundamental group of regular schemes over local fields with that of its dense open subsets. 
%Note that the following lemma improves Lemma \ref{lem:Semi-Purity}. 

\begin{lem} \label{lem:et-coho-supp-fin-*2}
    Let $X$ be a regular quasi-projective scheme of dimension $d\geq 1$ over a local field $k$ and let $Z \subset X$ be a closed subset of codimension $c=1$. Then the cohomology group $H^2_Z(X, (\Q/\Z)')$ is finite. 
\end{lem}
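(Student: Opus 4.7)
The plan is to reduce the support cohomology $H^2_Z(X,(\Q/\Z)')$ to $H^0(Z^{{\rm reg}},(\Q/\Z)'(-1))$ for an appropriate open dense regular part $Z^{{\rm reg}} \subset Z$, then invoke \lemref{lem:et-como-finite*-1}. Since étale cohomology with support depends only on the underlying closed subset, I would first replace $Z$ by its reduced structure. Let $Z^{{\rm reg}}$ denote the regular locus of $Z_{{\rm red}}$, which is open because $Z_{{\rm red}}$ is excellent (as a finite type $k$-scheme), and dense because every generic point of a reduced noetherian scheme has a field as its local ring. Set $Z^{{\rm sing}} := Z \setminus Z^{{\rm reg}}$. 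Then $Z^{{\rm sing}}$ has codimension $\geq 1$ in $Z$, and since each irreducible component of $Z$ has codimension $1$ in $X$, we get ${{\rm codim}}_X(Z^{{\rm sing}}) \geq 2$. Moreover, $Z^{{\rm reg}}$ is a disjoint union of finitely many regular integral quasi-projective varieties over $k$.

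Next, I would use the localization long exact sequence for the decomposition $Z = Z^{{\rm reg}} \sqcup Z^{{\rm sing}}$, with $U' := X \setminus Z^{{\rm sing}}$:
\[
H^i_{Z^{{\rm sing}}}(X,(\Q/\Z)') \to H^i_Z(X,(\Q/\Z)') \to H^i_{Z^{{\rm reg}}}(U',(\Q/\Z)') \to H^{i+1}_{Z^{{\rm sing}}}(X,(\Q/\Z)').
\]
By \lemref{lem:Semi-Purity} applied to the closed subset $Z^{{\rm sing}}$ of codimension $\geq 2$ in the regular scheme $X$ and to every $m$ prime to $p$, we obtain $H^i_{Z^{{\rm sing}}}(X,(\Q/\Z)') = 0$ for $i < 4$. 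Taking $i = 2, 3$ in the sequence above yields an isomorphism
\[
H^2_Z(X,(\Q/\Z)') \; \cong \; H^2_{Z^{{\rm reg}}}(U',(\Q/\Z)').
\]

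Finally, $Z^{{\rm reg}} \hookrightarrow U'$ is a regular closed immersion of pure codimension $1$ between regular noetherian schemes, so Gabber's absolute cohomological purity theorem (proved in \cite{Fuji02}) gives, for every $m$ invertible in $k$, an isomorphism $H^2_{Z^{{\rm reg}}}(U', \Z/m) \cong H^0(Z^{{\rm reg}}, \Z/m(-1))$. Passing to the colimit in $m$ and summing over primes $l \neq p$ yields
\[
H^2_Z(X,(\Q/\Z)') \; \cong \; H^0(Z^{{\rm reg}},(\Q/\Z)'(-1)).
\]
Applying \lemref{lem:et-como-finite*-1} to each of the finitely many components of $Z^{{\rm reg}}$ (each a regular quasi-projective variety over the local field $k$) shows that the right-hand side is finite, completing the proof.

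The main point to handle carefully is the bound ${{\rm codim}}_X(Z^{{\rm sing}}) \geq 2$; this is the hinge that makes the semi-purity vanishing from \lemref{lem:Semi-Purity} strong enough to collapse the localization sequence into a single isomorphism. Everything else is a straightforward combination of absolute purity with the finiteness input from \lemref{lem:et-como-finite*-1}.
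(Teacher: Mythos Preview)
Your proposal is correct and follows essentially the same route as the paper's proof: strip off the singular locus $Z^{\rm sing}$ (which has codimension $\geq 2$ in $X$, so semi-purity \lemref{lem:Semi-Purity} kills its contribution), then apply Gabber's absolute purity to the regular pair $(X\setminus Z^{\rm sing},\, Z^{\rm reg})$ and invoke \lemref{lem:et-como-finite*-1}. The only cosmetic difference is that you use the vanishing of $H^3_{Z^{\rm sing}}$ as well to obtain an \emph{isomorphism} $H^2_Z\cong H^2_{Z^{\rm reg}}$, whereas the paper uses only the vanishing of $H^2_{Z^{\rm sing}}$ to obtain an \emph{injection}, which already suffices for finiteness.
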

\begin{proof}
   % By Lemma \ref{lem:Semi-Purity}, we have  $H^2_Z(X, (\Q/\Z)') =0$ if $c\geq 2$. We can therefore assume that $c=1$. We need to show $H^2_Z(X, (\Q/\Z)')$ is a finite group. 
    We first assume that $Z$ is also regular. Then by Gabber's purity theorem \cite{Fuji02}, we have an isomorphism $H^2_Z(X, (\Q/\Z)') \cong H^0(Z, (\Q/\Z)'(-1))$. But then this group is finite by Lemma \ref{lem:et-como-finite*-1} because $Z$ is regular. 

    We now prove the lemma for general closed subset $Z \subset X$. Let $W = Z_{{\rm sing}}$. Then ${{\rm codim}}(W, Z) \geq 1$ and hence ${{\rm codim}}(W, X)\geq 2$. By Lemma \ref{lem:Semi-Purity}, we therefore have  $H^2_W(X, (\Q/\Z)')=0$. We now use the long exact sequence  
    \begin{equation}\label{eqn:et-supp-*2.1}
         H^2_{W}(X,  (\Q/\Z)' ) \to  H^2_Z(X, (\Q/\Z)' ) \to  
         H^2_{Z\setminus W}(X\setminus W, (\Q/\Z)'  ) \to \cdots
    \end{equation}
    for the triple $(X, Z, W)$ to conclude that  
    $H^2_Z(X, (\Q/\Z)' ) \inj 
         H^2_{Z\setminus W}(X\setminus W, (\Q/\Z)'  )$. It therefore suffices to show that the later group $H^2_{Z\setminus W}(X\setminus W, (\Q/\Z)' )$ is finite. But this follows from the above paragraph because $X \setminus W$ and $Z \setminus W$  are regular. This completes the proof. 
\end{proof}

\section{Prime to $p$-primary torsion} \label{sec:Prime-to-p}

In this section, we let $k$ denote a local field of exponential characteristic $p \geq 1$ and let $X$ be a regular quasi-projective scheme over $k$. We study the prime to $p$-primary torsion subgroup
$\pi^{\ab}_1(X)\{p'\}$. Note that if $k$ is a local field of characteristic zero, then $p=1$ and $\pi^{\ab}_1(X)\{p'\} = \pi^{\ab}_1(X)_{{\rm tor}}$. For a set $\mathbb{M}$ of prime integers, we let 
$\pi^{\ab}_1(X)_{\mathbb{M}} = \varprojlim_{n} \pi^{\ab}_1(X)/n$, where the limit is taken over all integers $n$ whose prime divisors lie in $\mathbb{M}$. Recall from \cite[Lemma 3.7]{TCFT-arXiv} that 
\begin{equation} \label{eqn:prime-to-p-*1}
    \pi^{\ab}_1(X) = \pi^{\ab}_1(X)_{\mathbb{P}} \oplus \pi^{\ab}_1(X)_{p},
\end{equation}
where $\mathbb{P}$ is the set of all primes different from $p$. Moreover, the Pontryagin dual $(\pi^{\ab}_1(X)_{\mathbb{P}})^{\vee}$ 
%of ${\pi^{\ab}_1(X)}_{\mathbb{P}}$ 
is isomorphic to 
$H^1(X, (\Q/\Z)')$ and 
\begin{equation} \label{eqn:prime-to-p-*2}
     \pi^{\ab}_1(X)\{p'\} =\pi^{\ab}_1(X)_{\mathbb{P}}\{p'\} =(\pi^{\ab}_1(X)_{\mathbb{P}})_{{\rm tor}}. 
\end{equation}

We now prove the main result that helps us to study the prime to $p$-primary torsion
subgroup of $\pi^{\ab}_1(X)$. 

\begin{lem}\label{lem:AEFG-X-U}
    Let $X$ be a regular quasi-projective scheme over a local field $k$. Let $U \inj X$ be a dense open subset of $X$. Then 
    we have the following short exact sequence. 
    \begin{equation}\label{eqn:AEFG-X-U*4}
    0 \to F \to \pi^{\ab}_1(U)\{p'\}\to \pi^{\ab}_1(X)\{p'\} \to 0,
\end{equation} 
 where $F$ is a finite group. In particular, $\pi^{\ab}_1(X)\{p'\}$ is finite if and only if $\pi^{\ab}_1(U)\{p'\}$ is finite. 
\end{lem}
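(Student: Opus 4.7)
The plan is to translate the statement into \'etale cohomology via Pontryagin duality and then apply the localization sequence together with the finiteness results of Section~\ref{sec:et-coho}. By the paragraph preceding the lemma, $(\pi^{\ab}_1(X)_{\mathbb{P}})^{\vee} \cong H^1(X, (\Q/\Z)')$ and $\pi^{\ab}_1(X)\{p'\} = (\pi^{\ab}_1(X)_{\mathbb{P}})_{\tor}$, with analogous identifications for $U$. It therefore suffices to produce a short exact sequence
\[
0 \to F \to \pi^{\ab}_1(U)_{\mathbb{P}} \to \pi^{\ab}_1(X)_{\mathbb{P}} \to 0
\]
with $F$ finite, and then to pass to torsion.

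Set $Z = X \setminus U$, a closed subset of the regular scheme $X$ of codimension $c \geq 1$. The localization long exact sequence in \'etale cohomology with $(\Q/\Z)'$-coefficients reads
\[
\cdots \to H^1_Z(X, (\Q/\Z)') \to H^1(X, (\Q/\Z)') \to H^1(U, (\Q/\Z)') \to H^2_Z(X, (\Q/\Z)') \to \cdots.
\]
Lemma~\ref{lem:Semi-Purity} gives $H^1_Z(X, (\Q/\Z)') = 0$ since $1 < 2c$, while $H^2_Z(X, (\Q/\Z)')$ is finite by Lemma~\ref{lem:et-coho-supp-fin-*2} when $c = 1$ and vanishes (again by Lemma~\ref{lem:Semi-Purity}) when $c \geq 2$. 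Letting $Q$ be the cokernel of the pull-back $H^1(X, (\Q/\Z)') \to H^1(U, (\Q/\Z)')$, we thus obtain a short exact sequence
\[
0 \to H^1(X, (\Q/\Z)') \to H^1(U, (\Q/\Z)') \to Q \to 0
\]
with $Q$ finite. Pontryagin duality, being exact on locally compact abelian groups, yields the sequence displayed above with $F := Q^{\vee}$ finite.

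Finally, I would take torsion subgroups in this short exact sequence. Left-exactness of $0 \to F \to (\pi^{\ab}_1(U)_{\mathbb{P}})_{\tor} \to (\pi^{\ab}_1(X)_{\mathbb{P}})_{\tor}$ is automatic, since $F$ is already torsion. For right-exactness, given $b \in (\pi^{\ab}_1(X)_{\mathbb{P}})_{\tor}$ with $nb = 0$ and any lift $a$ of $b$, the image $na$ lies in $F$, so $|F| \cdot n \cdot a = 0$ and $a$ is already torsion. In view of \eqref{eqn:prime-to-p-*2}, this gives \eqref{eqn:AEFG-X-U*4}. The ``in particular'' statement is then immediate, since in an extension $0 \to F \to A \to B \to 0$ with $F$ finite, $A$ is finite iff $B$ is. The only step with real content is the codimension-one case of the cohomological input, where the finiteness of $Q$ relies on Lemma~\ref{lem:et-coho-supp-fin-*2} (and hence ultimately on Gabber's purity theorem together with Lemma~\ref{lem:et-como-finite*-1}); the rest is a formal Pontryagin duality argument.
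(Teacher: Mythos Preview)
Your proof is correct and follows essentially the same route as the paper: localization sequence, semi-purity for $H^1_Z$, finiteness of $H^2_Z$ from Lemma~\ref{lem:et-coho-supp-fin-*2}, Pontryagin duality, then pass to torsion via \eqref{eqn:prime-to-p-*2}. If anything you are slightly more careful than the paper, explicitly separating the cases $c=1$ and $c\geq 2$ (the paper's Lemma~\ref{lem:et-coho-supp-fin-*2} is stated only for $c=1$) and spelling out why the torsion sequence is right-exact.
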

\begin{proof}
    The long exact sequence for the \'etale cohomology groups for the pair $(X, U)$ is as follows. 
    \begin{equation} \label{eqn:AEFG-X-U*1}
        H^1_Z(X, (\Q/\Z)') \to  H^1(X, (\Q/\Z)') \to  H^1(U, (\Q/\Z)') \to  H^2_Z(X, (\Q/\Z)') \to \cdots 
    \end{equation}
where $Z = X \setminus U$. Since $U$ is dense in $X$, ${{\rm codim}}(Z, X) \geq 1$. By semi-purity Lemma \ref{lem:Semi-Purity}, we therefore have $H^1_Z(X, (\Q/\Z)')=0$ and by Lemma \ref{lem:et-coho-supp-fin-*2}, the group $H^2_Z(X, (\Q/\Z)') $ is a finite group. The exact sequence \eqref{eqn:AEFG-X-U*1} then yields the following short exact sequence. 
\begin{equation}\label{eqn:AEFG-X-U*2}
    0 \to H^1(X, (\Q/\Z)') \to  H^1(U, (\Q/\Z)') \to F' \to 0,
\end{equation}
where $F'$ is a finite group. 
Taking Pontryagin dual, we get the following short exact sequence. 
\begin{equation}\label{eqn:AEFG-X-U*3}
    0 \to F \to \pi^{\ab}_1(U)_{\mathbb{P}} \to \pi^{\ab}_1(X)_{\mathbb{P}} \to 0,
\end{equation}
where $F$ is a finite group. Taking torsion subgroups therefore yields the short exact sequence 
\begin{equation}\label{eqn:AEFG-X-U*4.5}
    0 \to F \to (\pi^{\ab}_1(U)_{\mathbb{P}})_{{\rm tor}} \to (\pi^{\ab}_1(X)_{\mathbb{P}})_{{\rm tor}} \to 0.
\end{equation}
The lemma follows from \eqref{eqn:AEFG-X-U*4.5} and \eqref{eqn:prime-to-p-*2}.
\end{proof}

We now prove the finiteness of prime to $p$-primary torsion subgroup of the abelian \'etale fundamental group of a regular quasi-projective scheme over a local field. The following result was proven in \cite[Theorem~12.8]{TCFT-arXiv} when $X$ is a smooth quasi-projective scheme over a local field $k$ that admits a smooth compactification.

\begin{thm}\label{thm:A.1}
    Let $X$ be a regular quasi-projective scheme over a local field $k$. Then $\pi^{\ab}_1(X)\{p'\}$ is finite. 
\end{thm}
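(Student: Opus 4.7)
The strategy is to use Lemma~\ref{lem:AEFG-X-U} to pass from $X$ to a well-chosen dense open subset $U$, and then to reduce to the smooth case in \cite[Theorem~12.8]{TCFT-arXiv} via the alteration-and-transfer technique already exploited in the proof of Lemma~\ref{lem:et-como-finite*-1}. After decomposing $X$ into its connected components I may assume $X$ is integral. Choose a projective compactification $X \inj \overline{X}$ and a de Jong--Gabber alteration $\pi\colon W \to \overline{X}$ with $W$ smooth projective over $k$ and $\pi$ generically finite of degree $e$, where $e=1$ in characteristic zero and $e=p^r$ in characteristic $p>0$. Pick a dense open $V \subseteq \overline{X}$ over which $\pi_V\colon V':=\pi^{-1}(V)\to V$ is finite flat of degree $e$, and set $U := V\cap X$ and $U' := \pi^{-1}(U)$. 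Then $\pi_{U'}\colon U'\to U$ is finite flat of degree $e$, and $U'$ is a smooth quasi-projective $k$-scheme admitting the smooth projective compactification $W$; so \cite[Theorem~12.8]{TCFT-arXiv} applies to $U'$ and gives that $\pi^{\ab}_1(U')\{p'\}$ is finite.

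To transfer this finiteness from $U'$ to $U$, I invoke the trace map $(\pi_{U'})_*$ from \cite[XVII, Proposition~6.2.5]{SGA4}, which satisfies $(\pi_{U'})_*\circ \pi_{U'}^{*}=e\cdot \mathrm{Id}$ on $(\Q/\Z)'$-cohomology. Since $e$ is either $1$ or a power of $p$ and multiplication by $p$ is an isomorphism on $(\Q/\Z)'$-coefficients, the pull-back
\[
\pi_{U'}^{*}\colon H^1(U,(\Q/\Z)')\longrightarrow H^1(U',(\Q/\Z)')
\]
is split injective, with splitting $e^{-1}(\pi_{U'})_*$. Pontryagin dualization converts this split injection of discrete torsion groups into a split surjection of profinite abelian groups $\pi^{\ab}_1(U')_{\mathbb{P}}\surj \pi^{\ab}_1(U)_{\mathbb{P}}$. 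Hence $\pi^{\ab}_1(U)_{\mathbb{P}}$ is an abelian direct summand of $\pi^{\ab}_1(U')_{\mathbb{P}}$; since torsion respects direct sums, the torsion subgroup $\pi^{\ab}_1(U)\{p'\}=(\pi^{\ab}_1(U)_{\mathbb{P}})_{\tor}$ sits inside the finite group $\pi^{\ab}_1(U')\{p'\}=(\pi^{\ab}_1(U')_{\mathbb{P}})_{\tor}$ and is therefore finite. A final application of Lemma~\ref{lem:AEFG-X-U} promotes the finiteness from $U$ back to $X$.

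The main technical point that requires care is the existence of an alteration whose source $W$ is \emph{smooth} (and not merely regular) over the possibly non-perfect local field $k$; this is precisely why one must permit $e=p^r$ in positive characteristic. However, this is exactly the alteration input already used in the proof of Lemma~\ref{lem:et-como-finite*-1}, so it is available here. Modulo this, and modulo the routine reduction to the connected case, the proof is a direct combination of Lemma~\ref{lem:AEFG-X-U}, the trace--pull-back identity, and the known smooth case.
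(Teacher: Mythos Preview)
Your proof is correct and follows essentially the same route as the paper's: reduce to the integral case, choose a de Jong--Gabber alteration of a compactification, pass to a dense open $U$ over which the alteration is finite flat of $p$-power degree, use the trace identity $(\pi_{U'})_*\circ\pi_{U'}^*=e\cdot\mathrm{Id}$ on $(\Q/\Z)'$-cohomology to transfer finiteness of the prime-to-$p$ torsion from $U'$ to $U$, and then invoke Lemma~\ref{lem:AEFG-X-U}. The only cosmetic difference is that you apply \cite[Theorem~12.8]{TCFT-arXiv} directly to $U'$ (using $W$ as its smooth compactification), whereas the paper applies it to $W$ and then uses Lemma~\ref{lem:AEFG-X-U} once more for the pair $(W,U')$; both are equally valid.
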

\begin{proof}
     Without loss of generality, we assume that $X$ is a regular quasi-projective integral scheme over $k$ and let 
    $X \inj \overline{X}$ be a compactification of $X$.  Let $\phi \colon W \to \overline{X}$ be an alteration morphism. 
    As in the proof of Lemma~\ref{lem:et-como-finite*-1}, there exists  a dense open subset $U \subset X$ 
    such that the induced morphism $\phi_{U'} \colon U'=\phi^{-1}(U)\to U$ is a finite flat morphism of degree $e$,  where $e =1$ if ${{\rm Char}}(k)=0$ and $e=p^r$ if ${{\rm Char}}(k)=p>0$.
    By \cite[XVII, Proposition~6.2.5]{SGA4}, we have a map 
    $(\phi_{U'})_*\colon H^1(U', (\Q/\Z)') \to H^1(U, (\Q/\Z)')$ such that the composition 
    \begin{equation} \label{eqn:A.1.0}
        H^1(U, (\Q/\Z)') \xrightarrow{\phi_{U'}^*} H^1(U', (\Q/\Z)') \xrightarrow{(\phi_{U'})_*} H^1(U, (\Q/\Z)')
    \end{equation}
    is the multiplication by $e$. Taking Pontryagin dual, we get a group homomorphism $\phi_{U'}^* \colon \pi_1^{\ab}(U)_{\mathbb{P}} \to\pi_1^{\ab}(U')_{\mathbb{P}} $ such that the composition 
    \begin{equation} \label{eqn:A.1.1}
         \pi_1^{\ab}(U)_{\mathbb{P}} \xrightarrow{\phi_{U'}^*}  \pi_1^{\ab}(U')_{\mathbb{P}} \xrightarrow{(\phi_{U'})_*}  \pi_1^{\ab}(U)_{\mathbb{P}}
    \end{equation}
is the multiplication by $e$. Since $e$ is either $1$ or $p^r$, it follows that the composition of the arrows in \eqref{eqn:A.1.1} is an isomorphism. Taking torsion subgroups and using \eqref{eqn:prime-to-p-*2}, we get that the right vertical arrow in the following diagram is injective. 
\begin{equation}\label{eqn:A.1.2}
    \xymatrix@C.8pc{
  0 \ar[r]  &  F_1 \ar[r] & \pi^{\ab}_1(X)\{p'\} \ar[r] & \pi^{\ab}_1(U)\{p'\} \ar[r] \ar@{^{(}->}[d]^{\phi_{U'}^*} &   0\\
0\ar[r] &  F_2 \ar[r] & \pi^{\ab}_1(W)\{p'\} \ar[r] & \pi^{\ab}_1(U')\{p'\} \ar[r] &   0,
    }
\end{equation}    
where $F_i$ are finite groups and the horizontal rows are exact sequences by \eqref{eqn:AEFG-X-U*4}. By a diagram chase, it suffices to prove that $\pi^{\ab}_1(W)\{p'\}$ is finite. But this follows from \cite[Theorem~12.8]{TCFT-arXiv} as $W$ is a smooth projective scheme over a finite extension of $k'$ over $k$. 
\end{proof}

\section{$p$-primary torsion}\label{sec:p-part}

In this section, we let $X$ denote a geometrically integral quasi-projective regular scheme of dimension $d \geq 0$ over a local field $k$ of positive characteristic $p>0$. 
Recall that we let $\pi_1^{\ab}(X)^{{\rm geom}}:= {\Ker}(\pi_1^{\ab}(X) \to \pi_1^{\ab}(k))$.
We prove a (weak) structure theorem for $\pi_1^{\ab}(X)^{{\rm geom}}$ and use it to conclude that $p$-primary torsion $\pi_1^{\ab}(X)\{p\}$ is finite. 
We start with the following general result which is a corollary of the Galois theory. For the sake of completion, we include its proof. 

\begin{lem}\label{lem:coker-Galois}
    Let $L/K$ be a finite extension of fields. Then the induced map $G_L \to G_K$ has a finite cokernel, i.e., the image of $G_L$ has finite index in $G_K$.
\end{lem}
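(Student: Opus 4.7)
The plan is to factor the restriction map through the maximal separable subextension of $L/K$, reducing the statement to a standard fact in Galois theory of finite separable extensions. Let $L_0$ denote the separable closure of $K$ inside $L$, so that $L_0/K$ is finite separable of degree $[L_0:K] \leq [L:K]$, while $L/L_0$ is purely inseparable.

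My first step would be to show that the natural restriction map $G_L \to G_{L_0}$ is an isomorphism. The point is that a purely inseparable extension contributes nothing to the absolute Galois group: every element of $L$ has a $p$-power lying in $L_0$ (in characteristic $p > 0$), so a chosen separable closure $L_0^{{\rm sep}}$ of $L_0$ contains $L$ and is again a separable closure of $L$. Restriction of automorphisms from $L_0^{{\rm sep}}$ to itself then identifies $G_L$ with $G_{L_0}$.

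Next I would fix a separable closure $K^{{\rm sep}}$ of $K$ containing $L_0$, and appeal to the standard Galois correspondence: since $L_0/K$ is a finite separable extension inside $K^{{\rm sep}}$, the restriction map $G_{L_0} \to G_K$ is the inclusion of the closed subgroup ${\rm Gal}(K^{{\rm sep}}/L_0) \hookrightarrow {\rm Gal}(K^{{\rm sep}}/K)$, and this subgroup has finite index equal to $[L_0:K]$. Combining this with the isomorphism of the previous step, the image of $G_L \to G_K$ coincides with ${\rm Gal}(K^{{\rm sep}}/L_0)$ and thus has finite index in $G_K$, yielding the claimed finiteness of the cokernel.

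The only point that requires any care is the reduction via the purely inseparable extension $L/L_0$, and this is only relevant in positive characteristic; in characteristic zero one has $L_0 = L$ and the result is immediate. Beyond this, the argument is purely formal, and I do not anticipate any real obstacle.
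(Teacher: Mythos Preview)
Your overall strategy---factoring through the maximal separable subextension $L_0$ and then invoking the Galois correspondence for the finite separable extension $L_0/K$---is exactly the approach taken in the paper. However, there is a slip in your justification of the isomorphism $G_L \cong G_{L_0}$: it is \emph{not} true that a separable closure $L_0^{{\rm sep}}$ of $L_0$ contains $L$. For instance, if $K = \F_p(t)$ and $L = \F_p(t^{1/p})$, then $L_0 = K$ and $K^{{\rm sep}}$ does not contain $t^{1/p}$, since $K^{{\rm sep}}/K$ is separable while $t^{1/p}$ generates a purely inseparable extension of $K$. The fact that every element of $L$ has a $p$-power in $L_0$ does not force it into $L_0^{{\rm sep}}$.

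The fix is straightforward and is precisely what the paper does: work inside a common algebraic closure, form the compositum $L \cdot L_0^{{\rm sep}}$, and show that this equals a chosen separable closure $L^{{\rm sep}}$ of $L$ (the extension $L^{{\rm sep}}/L_0^{{\rm sep}}$ is purely inseparable while $L^{{\rm sep}}/L \cdot L_0^{{\rm sep}}$ is separable, forcing equality). One also checks $L \cap L_0^{{\rm sep}} = L_0$, so restriction gives the desired isomorphism ${{\rm Gal}}(L \cdot L_0^{{\rm sep}}/L) \xrightarrow{\cong} {{\rm Gal}}(L_0^{{\rm sep}}/L_0)$. With this correction your argument is complete and matches the paper's; the final step---that $G_{L_0}$ sits inside $G_K$ with index $[L_0:K]$---is handled in the paper via the orbit-stabilizer argument for the transitive action of $G_K$ on the $K$-embeddings $L_0 \hookrightarrow K^{{\rm sep}}$, which amounts to the same Galois correspondence you invoke.
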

\begin{proof}
    Let $K \subset F \subset L$ be the separable closure of $K$ in $L$.     
    Let $ F^{{\rm sep}}$ be a separable closure of $F$. Note that the field extension $F^{{\rm sep}} L / L $ is a separable algebraic extension. We let $L^{{\rm sep}}$ be a separable closure of $L$ such that $L F^{{\rm sep}} \subset L^{{\rm sep}}$. Since $L^{{\rm sep}} / F^{{\rm sep}}$ is purely inseparable (as a separable closure of a field is separably closed), $L^{{\rm sep}} / L F^{{\rm sep}}$ is purely inseparable. But since $L^{{\rm sep}}/L$ is separable, it follows that $L^{{\rm sep}}/ L F^{{\rm sep}}$ is also separable. This implies that $L^{{\rm sep}} = L F^{{\rm sep}}$. By a similar argument, we have that $F^{{\rm sep}} \cap L = F$. 
    In particular, the restriction map 
    $G_L = {{\rm Gal}}(L^{{\rm sep}}/ L) = {{\rm Gal}}(L F^{{\rm sep}}/L) \xrightarrow{\cong} {{\rm Gal}}(F^{{\rm sep}}/F) = G_F$ is an isomorphism. Since $F/K$ is finite separable, $F^{{\rm sep}} = K^{{\rm sep}}$ and the restriction map  
    $G_L = {{\rm Gal}}(L^{{\rm sep}}/ L) \to G_K = {{\rm Gal}}(K^{{\rm sep}}/ K)$ therefore factors as 
      $G_L \xrightarrow{\cong} G_F \to G_K$. It now suffices to show that the image of the map $G_F \to G_K$
      has a finite index. But this follows because $G_K$ acts on the finite set ${{\rm End}}_K(F, K^{{\rm sep}})$ of $K$-embeddings of $F$ in $K^{{\rm sep}}$ transversely and the stabilizer of the inclusion $F \subset K^{{\rm sep}}$ is the subgroup $G_F \inj G_K$. Note that we have proven that 
      $|G_K/G_L| \leq |{{\rm End}}_K(F, K^{{\rm sep}})| = [F:K] = [L:K]_{{\rm sep}} \leq [L:K]$. 
\end{proof}

\begin{lem} \label{lem:coker-pi-ab}
    Let $\phi\colon W \to Y$ be a generically finite dominant morphism between integral normal schemes. Then the induced map $\phi_* \colon \pi_1^{\ab}(W) \to \pi_1^{\ab}(Y)$ has finite cokernel. 
\end{lem}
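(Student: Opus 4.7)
The plan is to reduce to the corresponding statement for absolute Galois groups of function fields, where the preceding Lemma~\ref{lem:coker-Galois} already gives what we need. The bridge between the two is the standard fact that for an integral normal scheme $Z$ with function field $K(Z)$, picking a geometric point over the generic point yields a canonical continuous surjection $G_{K(Z)} \twoheadrightarrow \pi_1(Z)$: every connected finite \'etale cover of $Z$ arises as the normalization of $Z$ in some finite unramified separable extension of $K(Z)$.

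First, I would use that $\phi$ is dominant and that $W,Y$ are integral to see that $\phi$ sends the generic point of $W$ to that of $Y$, producing an embedding $K(Y) \hookrightarrow K(W)$; generic finiteness of $\phi$ is equivalent to this extension being finite. Fixing a geometric point of $W$ over its generic point and taking its image in $Y$ as a base point, I would assemble a commutative square whose top row is the induced map $G_{K(W)}^{\ab} \to G_{K(Y)}^{\ab}$, whose bottom row is $\phi_*$, and whose vertical arrows are the surjections described above.

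Second, the top row has finite cokernel by Lemma~\ref{lem:coker-Galois} (the cokernel of the abelianized map is a quotient of the cokernel of the original map of Galois groups, which is finite). A short diagram chase using surjectivity of the right vertical arrow then identifies the image of $\phi_*$ with the image of $G_{K(W)}^{\ab}$ pushed down to $\pi_1^{\ab}(Y)$; since this image contains the image in $\pi_1^{\ab}(Y)$ of a finite-index subgroup of $G_{K(Y)}^{\ab}$, it has finite index, and hence $\phi_*$ has finite cokernel.

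The only real subtlety is pinning down the naturality of the surjection $G_{K(Z)} \twoheadrightarrow \pi_1(Z)$ with respect to $\phi$, so that the commutative square above actually commutes. This is a formal but standard consequence of the equivalence between finite \'etale covers of a normal scheme and finite separable extensions of its function field that are unramified on $Z$, and I would not expect it to present real difficulties.
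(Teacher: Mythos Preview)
Your proposal is correct and follows essentially the same route as the paper: both reduce to Lemma~\ref{lem:coker-Galois} via the commutative square built from the surjections $G_{K(Z)}^{\ab}=\pi_1^{\ab}(K(Z))\twoheadrightarrow \pi_1^{\ab}(Z)$ for $Z=W,Y$ (the paper cites \cite[Expos\'e V, Proposition~8.2]{SGA1} for these surjections), and then use that $K(W)/K(Y)$ is finite. The only cosmetic difference is that the paper works directly with $\pi_1^{\ab}$ of the function fields rather than phrasing it in terms of $G_K^{\ab}$, but these are the same object.
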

\begin{proof}
    Let $L$ and $K$ denote the field of fractions of $W$ and $Y$, respectively. Then since $\phi$ is a generically finite dominant morphism, it induces a finite morphism $\phi \colon \Spec(L) \to \Spec(K)$. We consider the following commutative diagram. 
    \begin{equation} \label{eqn:coker-pi-ab*1}
        \xymatrix@C.8pc{
        \pi_1^{\ab}(L) \ar@{->>}[r] \ar^{\phi_*}[d] & \pi_1^{\ab}(W) \ar^{\phi_*}[d]\\
         \pi_1^{\ab}(K) \ar@{->>}[r] & \pi_1^{\ab}(Y). 
        }
    \end{equation}
    Since $W$ and $Y$ are normal, the horizontal arrows are surjective (see \cite[Expos\'e V, Proposition~8.2]{SGA1}). In particular, it suffices to show that the left vertical arrow in \eqref{eqn:coker-pi-ab*1} has a finite cokernel.  Since $\pi_1^{\ab}(L)$ (resp. $\pi_1^{\ab}(K)$) is the topological abelianization of $G_L$ (resp. $G_K$), it suffices to show that the image of $G_L$ has finite index in $G_K$ (under the restriction map $G_L \to G_K$). But this follows from Lemma \ref{lem:coker-Galois} because by assumption on the morphism $\phi$, the field extension $L/K$ is a finite extension of fields. 
\end{proof}

Since we want to apply Lemma \ref{lem:coker-pi-ab} for the morphism induced by a base change to a separable closure, we need the following lemma. 

\begin{lem} \label{lem:Base-change-Gen-fin}
  Let $W$ and $Y$ be normal geometrically connected schemes over fields $l_1$ and $l_2$, respectively. Assume that $l_2 \subset l_1$ and $l_1/l_2$ is a finite extension. Let $\phi \colon W \to Y$ be a generically finite dominant morphism such that the following diagram commutes. 
  \begin{equation} \label{BC-GF*-1}
      \xymatrix@C.8pc
      {
      W \ar[r]^-{\phi} \ar[d] & Y \ar[d] \\
      \Spec(l_1) \ar[r] & \Spec(l_2).}
  \end{equation}
 For $i=1, 2$, let $l_i^{{\rm sep}}$ denote a separable closure of the field $l_i$ such that 
 $l_2^{{\rm sep}} \subset l_1^{{\rm sep}}$. Then the induced morphism $\phi' \colon W \times_{l_1} l_1^{{\rm sep}} \to Y \times_{l_2} l_2^{{\rm sep}}$ 
  a generically finite dominant morphism between integral normal schemes. 
\end{lem}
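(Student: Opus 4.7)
The plan is to verify three things in sequence: (a) both base changes $\widetilde W := W\times_{l_1} l_1^{{\rm sep}}$ and $\widetilde Y := Y\times_{l_2} l_2^{{\rm sep}}$ are integral normal schemes, (b) $\phi'$ is dominant, and (c) $\phi'$ is generically finite.

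For (a), I would use that normality is preserved under base change by a separable algebraic extension: since $l_i^{{\rm sep}}/l_i$ is a filtered colimit of finite separable subextensions, the morphism $\widetilde W \to W$ (resp.\ $\widetilde Y \to Y$) is a cofiltered limit of \'etale morphisms, and normality is stable under \'etale base change and such limits. Geometric connectedness of $W$ over $l_1$ forces $\widetilde W$ to be connected (any disconnection would persist after further base change to $\overline{l_1}$), and a connected normal scheme is integral. The same reasoning applies to $\widetilde Y$.

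For (b), the composition $\widetilde W \xrightarrow{\phi'} \widetilde Y \to Y$ agrees with $\widetilde W \to W \xrightarrow{\phi} Y$, which has dense image since $\widetilde W \to W$ is surjective and $\phi$ is dominant. Let $Z$ be the closure of $\phi'(\widetilde W)$ in $\widetilde Y$; since $\widetilde Y \to Y$ is integral (hence closed), the image of $Z$ in $Y$ is closed and dense, hence all of $Y$. The fibers of $\widetilde Y \to Y$ are $0$-dimensional (as $l_2^{{\rm sep}}/l_2$ is algebraic), so $\dim\widetilde Y = \dim Y$, and a proper closed subset of the irreducible $\widetilde Y$ cannot surject onto $Y$. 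Hence $Z = \widetilde Y$, so $\phi'$ is dominant, and therefore $\phi'(\eta_{\widetilde W}) = \eta_{\widetilde Y}$.

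For (c), I would identify function fields. The generic point of $\widetilde W$ corresponds to a prime of $k(W)\otimes_{l_1} l_1^{{\rm sep}}$, so its residue field $k(\widetilde W)$ is generated as a field by the images of $k(W)$ and $l_1^{{\rm sep}}$; writing this inside $k(\widetilde W)$ gives $k(\widetilde W) = k(W)\cdot l_1^{{\rm sep}}$, and likewise $k(\widetilde Y) = k(Y)\cdot l_2^{{\rm sep}}$ embedded in $k(\widetilde W)$ via $\phi'^*$. Inserting the intermediate field $k(W)\cdot l_2^{{\rm sep}}$, one obtains
\[ [k(\widetilde W):k(\widetilde Y)] \leq [l_1^{{\rm sep}}:l_2^{{\rm sep}}]\cdot [k(W):k(Y)]. \]
Both factors are finite: $[k(W):k(Y)]<\infty$ by generic finiteness of $\phi$, and $[l_1^{{\rm sep}}:l_2^{{\rm sep}}]<\infty$ by the argument of \lemref{lem:coker-Galois}, which establishes $l_1^{{\rm sep}} = l_1\cdot l_2^{{\rm sep}}$ and hence the bound $[l_1:l_2]_{{\rm sep}}$. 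The main obstacle I expect is the clean identification of these function fields in (c), since $W$ is only assumed geometrically connected (not geometrically integral) and $l_1/l_2$ may be inseparable; but once (a) ensures integrality of $\widetilde W$ and $\widetilde Y$, the compositum descriptions follow directly from tracing the generic points.
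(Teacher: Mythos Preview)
Your argument is correct and reaches the goal, but it is organized differently from the paper's proof. Part~(a) is essentially identical to the paper's first paragraph. For (b) and (c), however, the paper factors $\phi'$ as a composition
\[
W\times_{l_1}l_1^{\rm sep}\xrightarrow{\ \phi_1\times_{l_1}l_1^{\rm sep}\ } Y\times_{l_2}l_1^{\rm sep}\xrightarrow{\ {\rm id}_Y\times\psi\ } Y\times_{l_2}l_2^{\rm sep},
\]
where $\phi_1\colon W\to Y\times_{l_2}l_1$ is the canonical map; it then observes that the second arrow is finite surjective (since $l_1^{\rm sep}/l_2^{\rm sep}$ is finite) and reduces everything to showing $\phi_1$ is generically finite and dominant, the latter via an irreducibility analysis of $Y\times_{l_2}l_1$. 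Your route instead handles dominance topologically (the fiber of $\widetilde Y\to Y$ over $\eta_Y$ is the single point $\eta_{\widetilde Y}$, so a proper closed $Z\subsetneq\widetilde Y$ cannot hit $\eta_Y$) and handles generic finiteness by a direct function-field degree bound. Both approaches rely on the same input from \lemref{lem:coker-Galois} that $l_1^{\rm sep}=l_1\cdot l_2^{\rm sep}$ is finite over $l_2^{\rm sep}$. Your function-field bookkeeping is a bit more hands-on but avoids having to check that $Y\times_{l_2}l_1$ is irreducible; the paper's factorization, on the other hand, makes the two sources of degree (from $\phi$ and from $l_1^{\rm sep}/l_2^{\rm sep}$) geometrically visible.

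One small correction: the degree $[l_1^{\rm sep}:l_2^{\rm sep}]$ equals $[l_1:l_2]_{\rm insep}$, not $[l_1:l_2]_{\rm sep}$ (the extension $l_1^{\rm sep}/l_2^{\rm sep}$ is purely inseparable since $l_2^{\rm sep}$ is separably closed, and $l_1$ is linearly disjoint from $l_2^{\rm sep}$ over the separable closure $F$ of $l_2$ in $l_1$). This does not affect your proof, since only finiteness is used. Also, your dimension sentence in (b) is a little loose as written; the cleanest way to phrase it is exactly the generic-fiber observation above, which shows directly that any proper closed $Z\subsetneq\widetilde Y$ misses $\eta_{\widetilde Y}$ and hence cannot map onto $\eta_Y$.
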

\begin{proof}
    Since $W$ and $Y$ are geometrically connected, it follows that $W \times_{l_1} l_1^{{\rm sep}}$ and $Y \times_{l_2} l_2^{{\rm sep}}$  are connected schemes over $l_1^{{\rm sep}}$ and $l_2^{{\rm sep}}$, respectively. Since $l_i^{{\rm sep}}$ is a separable extension of $l_i$, it follows that 
    $W \times_{l_1} l_1^{{\rm sep}}$ and $Y \times_{l_2} l_2^{{\rm sep}}$ are normal
    (see \cite[Lemma 0C3M]{StackP}). In particular, they are integral as well. We are reduced to show that the induced morphism $\phi' \colon W \times_{l_1} l_1^{{\rm sep}} \to Y \times_{l_2} l_2^{{\rm sep}}$ 
  a generically finite dominant morphism. To see this, we note that the morphism $\phi'$ is the following composition. 
  \begin{equation}\label{BC-GF*-2}
  \xymatrix@C.8pc{
       W \times_{l_1} l_1^{{\rm sep}} \ar[d]^-{\phi_1 \times_{l_1}  l_1^{{\rm sep}}} \\
       (Y \times_{l_2} l_1) \times_{l_1}  l_1^{{\rm sep}} \ar[r]^-{\cong}&
       Y  \times_{l_2}  l_1^{{\rm sep}} \ar[d]^-{{\rm id}_Y \times_{l_2} \psi} \\
       %(Y \times_{l_2} l_2^{{\rm sep}}) \times_{l_2^{{\rm sep}}}  l_1^{{\rm sep}} \ar[d]\\
       &  Y \times_{l_2} l_2^{{\rm sep}},}
  \end{equation}
    where $\psi\colon \Spec(l_2^{{\rm sep}}) \to \Spec(l_1^{{\rm sep}})$ is the induced map between separable closures and $\phi_1 \colon W \to Y \times_{l_2} l_1$ is the induced morphism such that the composition $ W \xrightarrow{\phi_1} Y \times_{l_2} l_1 \to Y$ is the given morphism $\phi$. As we have seen in the proof of Lemma \ref{lem:coker-Galois}, the field extension $l_2^{{\rm sep}}/ l_1^{{\rm sep}}$ is a finite extension. In particular, the morphism 
    ${{\rm Id}}_Y  \times_{l_2} \psi \colon Y  \times_{l_2}  l_1^{{\rm sep}} 
    \to 
       Y \times_{l_2} l_2^{{\rm sep}} $ is a finite surjective morphism. Since  the composition of 
      generically finite dominant morphisms are again generically finite dominant, it suffices to show that $\phi_1 \colon W \to Y \times_{l_2} l_1$ is a generically finite dominant morphism. 
    
    Since the morphism $\phi$ is generically finite morphism, it follows that so is $\phi_1$ (see \cite[Exercise II.4.8]{Hart}). We are now left to show that $\phi_1$ is dominant. To see this, we note that $Y$ is normal geometrically connected over $l_2$, and therefore $Y \times_{l_2} l_1$ is irreducible. Indeed, if $l_2 \subset l' \subset l_1$ is the separable closure of $l_2$ in $l_1$, then $Y \times_{l_2} l'$ is connected and normal and hence irreducible. Since $l_1/l'$ is purely inseparable, it follows that 
    $Y \times_{l_2} l_1 = (Y \times_{l_2} l') \times_{l'} l_1$ is also irreducible.  Since the projection morphism $Y \times_{l_2} l_1 \to Y$ is a finite morphism and $\phi \colon W \to Y$ is dominant, it follows that $\phi_1$ is also a dominant morphism. This completes the proof. 
\end{proof}

We now prove a structure theorem for $\pi_1^{\ab}(X)^{{\rm geom}}$. 
\begin{prop} \label{prop:Struc-geom-*1}
   Let  $X$ denote a geometrically connected and normal projective scheme of dimension $d \geq 0$ over a local field $k$ of exponential characteristic $p\geq 1$. Then we have the following exact sequence. 
   \begin{equation} \label{eqn:Struc-geom-*1.0}
       \widehat{\Z}^r \to \pi_1^{\ab}(X)^{{\rm geom}} \to F \to 0,
   \end{equation}
   where $F$ is a finite group and $r \geq 0$. In particular, $\pi_1^{\ab}(X)^{{\rm geom}}$ is a topologically finitely generated profinite abelian group.  Moreover, if $\pi_1^{\ab}(X)^{{\rm geom}} \surj H$ is a topological quotient of $\pi_1^{\ab}(X)^{{\rm geom}}$ and $l$ is a prime (including $p$), then $l$-primary torsion subgroup $H\{l\}$ is finite. 
\end{prop}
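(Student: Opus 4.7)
The plan is to reduce to the smooth projective case via the de Jong--Gabber alteration and transfer the structural information after passing to separable closures. Specifically, I would apply the alteration theorem to obtain a generically finite dominant morphism $\phi : W \to X$ with $W$ smooth projective and geometrically connected over some finite extension $k'/k$.

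Base-changing to separable closures, set $\bar{X} := X \times_k k^{{\rm sep}}$ and $\bar{W} := W \times_{k'} k'^{{\rm sep}}$. By \lemref{lem:Base-change-Gen-fin}, the induced morphism $\bar{\phi} : \bar{W} \to \bar{X}$ is a generically finite dominant morphism between integral normal schemes, so \lemref{lem:coker-pi-ab} yields that $\bar{\phi}_* : \pi_1^{\ab}(\bar{W}) \to \pi_1^{\ab}(\bar{X})$ has finite cokernel. Since $\bar{W}$ is smooth proper and (geometrically) connected over the separably closed field $k'^{{\rm sep}}$, the Albanese morphism induces an isomorphism $\pi_1^{\ab}(\bar{W}) \cong \prod_{\ell} T_\ell({\rm Alb}(\bar{W}))$, which is a topologically finitely generated profinite abelian group. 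Combining these facts, $\pi_1^{\ab}(\bar{X})$ is itself topologically finitely generated.

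To descend back to $X$, I would use that the image of the composition $\pi_1(\bar X) \to \pi_1(X) \twoheadrightarrow \pi_1^{\ab}(X)$ is exactly $\pi_1^{\ab}(X)^{\geom} = \Ker(\pi_1^{\ab}(X) \twoheadrightarrow G_k^{\ab})$, because the cokernel of $\pi_1(\bar X) \to \pi_1(X)$ is $G_k$ and therefore becomes $G_k^{\ab}$ after abelianizing. This image factors through $\pi_1^{\ab}(\bar{X})$, exhibiting $\pi_1^{\ab}(X)^{\geom}$ as a continuous quotient of $\pi_1^{\ab}(\bar{X})$, hence topologically finitely generated profinite abelian. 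Such a group admits a continuous surjection from $\widehat{\Z}^r$ for some $r \geq 0$, which yields the claimed exact sequence (with $F = 0$ permitted).

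For the last assertion, any topological quotient $H$ of $\pi_1^{\ab}(X)^{\geom}$ is again topologically finitely generated profinite abelian. Decomposing $H = \prod_\ell H_\ell$ into its pro-$\ell$ Sylow subgroups, each $H_\ell$ is topologically finitely generated pro-$\ell$ abelian, hence of the form $\Z_\ell^{s_\ell} \oplus F_\ell$ for a finite $\ell$-group $F_\ell$, and so $H\{\ell\} = F_\ell$ is finite. The main technical input is the identification $\pi_1^{\ab}(\bar{W}) \cong \prod_\ell T_\ell({\rm Alb}(\bar{W}))$ for smooth projective geometrically connected $\bar{W}$ over a separably closed field, including the $p$-primary part in characteristic $p > 0$; this is classical, and \'etale $\pi_1^{\ab}$ is insensitive to the purely inseparable extension to an algebraic closure of $k'^{{\rm sep}}$.
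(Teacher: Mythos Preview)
Your approach is correct and closely parallels the paper's: both take a de Jong--Gabber alteration $\phi \colon W \to X$, apply \lemref{lem:Base-change-Gen-fin} and \lemref{lem:coker-pi-ab} over separable closures to obtain finite cokernel, and then identify $\pi_1^{\ab}(X)^{\rm geom}$ with the image of $\pi_1^{\ab}(\bar{X})$ in $\pi_1^{\ab}(X)$. The difference lies in the input for the smooth case: the paper invokes Yoshida's structure theorem \cite{Yos03} for $\pi_1^{\ab}(W)^{\rm geom} \cong \widehat{\Z}^r \oplus F_1$ and maps this into $\pi_1^{\ab}(X)^{\rm geom}$ with finite cokernel, whereas you work one level higher and use the classical Albanese identification $\pi_1^{\ab}(\bar{W}) \cong \prod_\ell T_\ell({\rm Alb}(\bar{W}))$ directly. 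Your route is slightly more self-contained (it avoids the black-box citation to Yoshida) and in fact produces the exact sequence with $F = 0$; the paper's route stays closer to the existing class-field-theory literature. One small point you elide: the alteration gives $W$ smooth over some finite extension $k'/k$ but not automatically geometrically connected over $k'$; the paper handles this by replacing $W$ with a connected component of $W \otimes_{k'} k''$ for a further finite separable extension $k''/k'$, and you should insert the same reduction before invoking \lemref{lem:Base-change-Gen-fin}.
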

\begin{proof}
   Let us first assume the existence of the exact sequence \eqref{eqn:Struc-geom-*1.0}. Then the profinite abelian group $\pi_1^{\ab}(X)^{{\rm geom}}$, and hence its topological quotient, is a finitely generated profinite abelian group. The last claim of the lemma then follows from \cite[Theorem~4.3.5]{Pro-fin}. Indeed, the topological quotient $H$ is a finitely generated profinite abelian group with $d(H) \leq r$ and the loc. cit. yields that for each prime $l$, the $l$-primary torsion subgroup of $H$ is finite. We now prove the exact sequence \eqref{eqn:Struc-geom-*1.0}.

    Let $\phi \colon W \to X$ be an alteration (resp. a desingularization) of $X$ of de Jong and Gabber \cite{Tem17} when ${\rm Char}(k)=p>0$ (resp. when ${\rm Char}(k)=0$). Since a desingularization map is an alteration of degree $1$, we work with alteration $\phi\colon W \to X$ irrespective of the characteristic of $k$. 
    Note that $\phi$ is a generically finite morphism between integral schemes over $\Spec(k)$. Moreover, there exists a finite field extension $k'/k$ such that $W \to \Spec(k')$ is a smooth morphism.  The connected components of $W \otimes_{k'} \overline{k'}$ are defined over a finite separable extension $k''/k'$ and are geometrically connected. 
    In other words, there exists a finite extension $k''/k'$ such that connected components of $W \otimes_{k'} k''$ are geometrically connected. Let $W_1 \to \Spec(k'')$ be such a component. Since the induced morphism  $W_1 \to W$ is \'etale morphism, the composite morphism $W_1 \to X$ is also an alteration such that $W_1 \to \Spec(k')$ is geometrically connected and smooth. We therefore assume that $W$ is geometrically connected and smooth over $k'$.

    We let $k \subset k^{{\rm sep}}$ (resp. $k' \subset k'^{{\rm sep}}$) be an separable closure of $k$ (resp. of $k'$). Since $W/k'$ and $X/k$ are geometrically connected
    \cite[(1.7)]{KL81} (also see \cite[IX, Theorem~6.1]{SGA1}) yields that the rows in the following diagram are exact sequences. 
    \begin{equation} \label{eqn:Struc-geom-*1.1}
        \xymatrix@C.8pc{
        \pi^{\ab}_1(W\otimes_{k'} k'^{{\rm sep}}) \ar[r] \ar[d]^-{\alpha} & \pi^{\ab}_1(W) \ar[r]^-{\theta'} \ar[d]^-{\beta} & \pi^{\ab}_1(k') \ar[r] \ar[d]^-{\gamma}& 0\\
         \pi^{\ab}_1(X\otimes_{k} k^{{\rm sep}}) \ar[r] & \pi^{\ab}_1(X) \ar[r]^-{\theta} & \pi^{\ab}_1(k) \ar[r]& 0. 
        }
    \end{equation}
    Since all arrows in the above diagram are natural push-forward maps, both the squares in 
    \eqref{eqn:Struc-geom-*1.1} are commutative. Note that in the loc. cit., they base change to the algebraic closures but it does not matter as the \'etale fundamental groups are invariant under topological homeomorphisms. 
    In particular, we get surjections $ \pi^{\ab}_1(W\otimes_{k'} k'^{{\rm sep}}) \surj \Ker(\theta')$ and $ \pi^{\ab}_1(X\otimes_{k} k^{{\rm sep}}) \surj \Ker(\theta)$.  Since $W \to \Spec(k')$ is geometrically connected and smooth, it follows from \cite[Theorem~1.1]{Yos03} that 
    $\Ker(\theta') = \widehat{\Z}^r \oplus F_1$, where $r\geq 0$ and $F_1$ is a finite group.
    If we let $\beta' \colon \Ker(\theta') \to \Ker(\theta) = \pi_1^{\ab}(X)^{{\rm geom}}$ denote the induced map, then we have the following exact sequences.  
    \begin{equation}\label{eqn:Struc-geom-*1.2}
        \widehat{\Z}^r \oplus F_1 \xrightarrow{\beta'}  \pi_1^{\ab}(X)^{{\rm geom}} \to {{\rm coker}}(\beta')
    \end{equation}
    and
    \begin{equation}\label{eqn:Struc-geom-*1.3}
        {{\rm coker}}(\alpha) \to {{\rm coker}}(\beta') \to 0. 
    \end{equation}
    By the above exact sequences, it suffices to show that ${{\rm coker}}(\alpha)$ is finite. But this follows from Lemmas \ref{lem:coker-pi-ab} and \ref{lem:Base-change-Gen-fin}. Indeed, by Lemma \ref{lem:Base-change-Gen-fin}, the induced morphism $W\otimes_{k'} k'^{{\rm sep}}  \to X\otimes_{k} k^{{\rm sep}}$ is a generically finite dominant morphism between integral normal schemes. But then by Lemma \ref{lem:coker-pi-ab}, the induced map $\alpha$ on abelian fundamental groups has finite cokernel. This completes the proof. 
\end{proof}

 \begin{thm} \label{thm:Main-thm-2} 
      Let  $X$ denote a geometrically connected projective regular scheme of dimension $d \geq 0$ over a local field $k$ of positive characteristic $p>0$. Then $p$-primary torsion subgroup  $\pi_1^{\ab}(X)\{p\}$ is finite. 
 \end{thm}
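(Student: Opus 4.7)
The plan is to reduce the theorem to Proposition~\ref{prop:Struc-geom-*1} via the short exact sequence
\[
0 \to \pi_1^{\ab}(X)^{\geom} \to \pi_1^{\ab}(X) \xrightarrow{\theta} \pi_1^{\ab}(k) \to 0,
\]
which comes from the bottom row of~\eqref{eqn:Struc-geom-*1.1} together with the definition $\pi_1^{\ab}(X)^{\geom} := \Ker(\theta)$. Since the functor $A \mapsto A\{p\} = {\rm Hom}(\Q_p/\Z_p, A)$ is left exact on abelian groups, applying it to this sequence yields an exact sequence
\[
0 \to \pi_1^{\ab}(X)^{\geom}\{p\} \to \pi_1^{\ab}(X)\{p\} \to \pi_1^{\ab}(k)\{p\}.
\]
Thus the two ingredients I need are: (i) $\pi_1^{\ab}(k)\{p\} = 0$, and (ii) $\pi_1^{\ab}(X)^{\geom}\{p\}$ is finite.

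For (i), since $k$ has positive characteristic $p$, local class field theory identifies $\pi_1^{\ab}(k)$ with the profinite completion $\widehat{k^{\times}}$. Writing $k \cong \F_q((t))$ with $q$ a power of $p$, one has a decomposition $k^{\times} \cong t^{\Z} \times \F_q^{\times} \times U^{(1)}$, where $U^{(1)}$ is the pro-$p$ group of principal units, so that $\widehat{k^{\times}} \cong \widehat{\Z} \times \F_q^{\times} \times U^{(1)}$. Now $\widehat{\Z}$ is torsion-free, $|\F_q^{\times}| = q-1$ is prime to $p$, and any $u = 1 + f \in U^{(1)}$ with $u^p = 1$ satisfies $1 = (1+f)^p = 1 + f^p$ in characteristic $p$, forcing $f = 0$ in the integral domain $\F_q[[t]]$. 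Hence $\pi_1^{\ab}(k)$ has no $p$-primary torsion.

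For (ii), since $X$ is regular (hence normal), projective, and geometrically connected over $k$, the hypotheses of Proposition~\ref{prop:Struc-geom-*1} are satisfied. Taking $H = \pi_1^{\ab}(X)^{\geom}$ itself as the topological quotient in the last assertion of that proposition (with prime $l=p$) immediately gives that $\pi_1^{\ab}(X)^{\geom}\{p\}$ is finite. Combining (i) and (ii), the displayed exact sequence collapses to an isomorphism $\pi_1^{\ab}(X)\{p\} \cong \pi_1^{\ab}(X)^{\geom}\{p\}$, which is finite. No serious obstacle arises in this assembly: the substantive work is already encapsulated in Proposition~\ref{prop:Struc-geom-*1} (where the alteration of de Jong--Gabber, the smooth case of Yoshida, and the cokernel estimates of Lemmas~\ref{lem:coker-pi-ab} and~\ref{lem:Base-change-Gen-fin} are combined), and the vanishing of $p$-torsion in $\pi_1^{\ab}(k)$ is a standard consequence of local class field theory in characteristic $p$.
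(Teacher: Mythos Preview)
Your proof is correct and follows essentially the same approach as the paper: both reduce to Proposition~\ref{prop:Struc-geom-*1} via the exact sequence $0 \to \pi_1^{\ab}(X)^{\geom}\{p\} \to \pi_1^{\ab}(X)\{p\} \to \pi_1^{\ab}(k)\{p\}$ and the vanishing of $\pi_1^{\ab}(k)\{p\}$ from local class field theory in characteristic $p$. Your treatment of the vanishing step is in fact more careful than the paper's (which writes $G_k^{\ab}\{p\} = k^{\times}\{p\}$ without passing to the profinite completion), but the substance is the same.
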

\begin{proof}
By taking $p$-primary torsion subgroups of the exact sequence 
$0 \to \pi_1^{\ab}(X)^{{\rm geom}} \to \pi^{\ab}_1(X) \to \pi^{\ab}_1(k) \to 0$, we get the following exact sequence. 
\begin{equation}\label{eqn:Main-thm-2*-1}
    0 \to \pi_1^{\ab}(X)^{{\rm geom}}\{p\} \to \pi^{\ab}_1(X)\{p\} \to \pi^{\ab}_1(k)\{p\}. 
\end{equation}
Since $k$ is a local field of positive characteristic $p>0$, it follows $\pi^{\ab}_1(k)\{p\} = G_k^{\ab}\{p\} = k^{\times} \{p\} = 0$. The lemma now follows from
Proposition \ref{prop:Struc-geom-*1}. 
%Indeed, by the proposition, $\pi_1^{\ab}(X)^{{\rm geom}}$ is a finitely generated pro-finite abelian group and hence has finite $p$-primary torsion by \cite[Theorem~4.3.5]{Pro-fin}. This completes the proof. 
\end{proof}

 \subsection{Proof of Theorem \ref{thm:A}} \label{sec:proof-thm:A}
Theorem \ref{thm:A} follows immediately from  Theorems \ref{thm:A.1} and \ref{thm:Main-thm-2}. 
 \qed

\section{Structure of $SK_1(X)$} \label{sec:SK_1(X)}

In this section, we let $f \colon X \to \Spec(k)$ denote an integral projective scheme of dimension $d \geq 0$ over a local field $k$. Recall that $SK_1(X)$ is defined as follows. 
\begin{equation}\label{eqn:def-SK_1*-1}
    SK_1(X) := {{\rm Coker}}(\oplus_{y\in X_{(1)}} K_2(k(y)) \xrightarrow{(\partial^y_x)} \oplus_{x \in X_{(0)}} k(x)^{\times}),
\end{equation}
where $\partial^y_x = 0$ if $x \not\in C_y := \overline{\{y\}}$ and it is the following composition map, otherwise. 
\begin{equation} \label{eqn:def-SK_1*-2}
    K_2(k(y)) \xrightarrow{\partial_A} \oplus_{i=1}^r k(x_i)^{\times} 
    \xrightarrow{\sum_i  {\rm Nm}_{k(x)}^{k(x_i)}} k(x)^{\times},
\end{equation}
where the semi-local ring $A$ is the normalization of $\mathcal{O}_{C_y, x}$, and $x_1 , \dots , x_r$ are its closed points. The first arrow in \eqref{eqn:def-SK_1*-2} is the boundary map for the localization sequence for the regular ring $A$ and the second arrow is the sum of the Norm maps. 
If $X$ is a curve, then $SK_1(X) = {{\rm Im}}(\oplus_{x \in X_{(0)}} k(x)^{\times} \to K_1(X)) \subset K_1(X)$. 

By a reciprocity law for $K$-groups, it follows that the norm maps ${\rm Nm}^{k(x)}_k$ for $x \in X_{(0)}$ induce a push-forward map $f_* \colon SK_1(X) \to k^{\times}$. We also denote this map by ${\rm Nm}_{X/k}$. 
We let $V(X/k)$ denote its kernel, i.e., 
\begin{equation}\label{eqn:def-SK_1*-4}
    V(X/k) := \Ker(f_* \colon SK_1(X) \to k^{\times}). 
\end{equation}
We know the structure of $k^{\times}$ for local fields, so to determine the structure of $SK_1(X)$, it suffices to study $V(X/k)$. Our main theorem about the structure of $V(X/k)$ is Theorem~\ref{thm:B'} which, for convenience,  we restate as follows.

\begin{thm} \label{thm:B}
    Let $X$ be a geometrically irreducible 
    (for example, normal geometrically connected) projective integral scheme of dimension $d \geq 0$ over a local field $k$. Then 
    \begin{equation} \label{eqn:thm-B*-1}
        V(X/k) = T \oplus D,
    \end{equation}
    where $T$ is a torsion group and $D$ is a divisible group. 
\end{thm}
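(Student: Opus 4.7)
The plan is to follow the two-step strategy outlined in the introduction: first reduce the theorem to the case of a geometrically irreducible projective integral curve, and then handle the curve case via an alteration, strengthening the result to bounded-exponent torsion along the way (Theorem~\ref{thm:B-curve}).

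\textbf{Reduction to curves.} By the definition of $SK_1(X)$ as a cokernel of maps indexed by points of dimension $0$ and $1$, every element of $V(X/k)$ is supported on a finite set of closed points satisfying the norm condition. Given any such finite collection, the aim is to produce a single geometrically irreducible projective integral closed curve $C \hookrightarrow X$ containing all the chosen points, via a Bertini-type linear section argument; in positive characteristic one may first regularize via an alteration and then descend. The norm condition transports automatically along the inclusion, so the natural pushforward
\[
\bigoplus_{C \hookrightarrow X} V(C/k) \twoheadrightarrow V(X/k)
\]
is surjective, where the sum runs over geometrically integral projective curves in $X$. Assuming the curve case gives $V(C/k) = T_C \oplus D_C$, the subgroup $D \subseteq V(X/k)$ generated by the images of all $D_C$ is divisible (any sum of divisible subgroups is divisible), and the quotient $V(X/k)/D$ is a quotient of $\bigoplus_C T_C$, hence torsion. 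Since a divisible subgroup of any abelian group splits off as a direct summand, $V(X/k) = T \oplus D$ with $T$ torsion.

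\textbf{The curve case.} Let $X$ be a geometrically irreducible projective integral curve over $k$. By de Jong--Gabber, there is a generically finite dominant morphism $\phi \colon Y \to X$ of degree $e$ (with $e = 1$ in characteristic zero and $e = p^r$ in characteristic $p > 0$), where $Y$ is smooth, projective, and geometrically connected over a finite extension $k'/k$. The Saito--Yoshida structure theorem for smooth projective curves over local fields gives $V(Y/k') = T_Y \oplus D_Y$ with $T_Y$ finite and $D_Y$ divisible. The norm compatibility of $SK_1$ through the finite extension $k'/k$ yields a pushforward $\phi_* \colon V(Y/k') \to V(X/k)$ whose cokernel is annihilated by $e$ via the standard degree formula $\phi_* \circ \phi^* = e \cdot \mathrm{id}$. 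Setting $D := \phi_*(D_Y) \subseteq V(X/k)$, which is divisible as the image of a divisible group, we have
\[
e \cdot V(X/k) \subseteq \mathrm{Im}(\phi_*) = D + \phi_*(T_Y).
\]
Hence every class in $V(X/k)/D$ becomes torsion upon multiplication by $e$ and is therefore torsion itself. Divisibility of $D$ yields the splitting $V(X/k) = T \oplus D$, and the displayed inclusion bounds $\exp(T)$ by $e \cdot \exp(T_Y)$, establishing the bounded-exponent form of Theorem~\ref{thm:B-curve}.

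\textbf{Main obstacle.} The principal difficulty is the Bertini-type step in the reduction: producing a geometrically irreducible projective integral curve passing through a prescribed finite set of closed points on an integral (not necessarily regular) projective scheme over a local, hence non-algebraically-closed, base field. Once this geometric input is secured, the curve case runs smoothly, since alterations together with the Saito--Yoshida theorem place us in a situation where the degree formula directly produces both the divisible summand and the bounded-exponent torsion quotient.
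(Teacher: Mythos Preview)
Your reduction to curves is essentially the paper's Lemma~\ref{lem:red-to-curve}; the Bertini input you flag as the main obstacle is exactly what the paper imports from Altman--Kleiman \cite{AK}, so that step is fine.

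The gap is in the curve case. You invoke a pullback $\phi^*$ on $SK_1$ and the degree formula $\phi_*\circ\phi^* = e\cdot{\rm id}$ for the alteration $\phi\colon Y\to X$, but $X$ is an arbitrary (possibly singular) integral curve, and $\phi$ is \emph{not flat} over the singular points of $X$. Flatness of a dominant map to an integral curve requires the target to be regular (this is exactly what the paper uses before Lemma~\ref{lem:coker-alt-V}). Without flatness, there is no well-behaved $\phi^*$ on $SK_1$, and even pointwise the fibre degrees $\sum_{y\mapsto x}[k(y):k(x)]$ can exceed $e$ (already for a nodal curve and its normalisation, where $e=1$ but the fibre over the node has total degree $2$). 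So the assertion that $\operatorname{coker}(\phi_*)$ is killed by $e$ is unjustified and in general false for that value of $e$.

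The paper avoids this by factoring $\phi$ as $C'\xrightarrow{\phi'} C^N\xrightarrow{\phi} C$: the normalisation step $C^N\to C$ is handled by a direct birational argument on the finitely many singular fibres (Lemma~\ref{lem:coker-nor-V}), with no pullback or degree formula; only for $\phi'\colon C'\to C^N$, a map between \emph{regular} curves and hence automatically finite flat, does one get the uniform fibre-degree bound $[k(z_0):k(x)]\le n$ needed for the pointwise torsion estimate (Lemma~\ref{lem:coker-alt-V}). Your argument becomes correct once you insert this factorisation, but as written the single-step degree formula does not go through.
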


We recall here that a normal integral scheme over a field is geometrically connected if and only if it is geometrically irreducible.  If $X$ is a geometrically connected smooth projective scheme over a local field, the above theorem follows from \cite[Proposition~2.3]{Sato05} 
(also see \cite[Theorem~1.3]{Yos03} and \cite[Proposition~2.1]{Forre-Crelle}). Note that if $d=0$, then $X$ is the spectrum of a local field $l$ containing $k$ such that $l/k$ is a finite purely inseparable field extension. In this case, the norm map $SK_1(X) = l^{\times} \to k^{\times}$ is the power $p^r$-map, where $p = {{\rm Char}}(k)$ and $r\geq 0$. In particular, $V(X/k) = {\Ker}( l^{\times} \to k^{\times})=0$. We can therefore assume that $d \geq 1$. 

\subsection{Reduction to the case of  curves} \label{sec:Red-to-curve}

In this section, we assume that Theorem \ref{thm:B} is known for geometrically connected projective curves over local fields and prove the theorem under this assumption. 

\begin{lem} \label{lem:red-to-curve}
    Assume that Theorem \ref{thm:B} holds for all geometrically irreducible projective integral curves. Then the theorem holds for all geometrically irreducible projective integral schemes over local fields.
\end{lem}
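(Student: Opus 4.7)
The plan is to reduce to the curve case via a Bertini-type moving argument, exploiting the general fact that any divisible subgroup of an abelian group is a direct summand. With that fact in hand, it suffices to exhibit a divisible subgroup $D \subset V(X/k)$ such that $V(X/k) = V(X/k)_{\tor} + D$; the complement $V(X/k)/D$ is then automatically torsion, giving the required decomposition $V(X/k) = T \oplus D$.

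Given $\alpha \in V(X/k)$, represent it by a finite sum $\sum_i a_{x_i}$ with $a_{x_i} \in k(x_i)^{\times}$ and $x_i \in X_{(0)}$. The geometric heart of the argument is to find a geometrically integral closed curve $C \subset X$ containing the finite set $S = \{x_1,\dots,x_m\}$. For $d=1$ one takes $C=X$; for $d \geq 2$, I would embed $X \hookrightarrow \P^N_k$ via a sufficiently ample complete linear system and apply Bertini's theorem for geometric irreducibility to a general member of the sublinear system of hypersurface sections vanishing on $S$, iterating until the dimension drops to one. Since $k$ is infinite and $X$ is geometrically irreducible, this produces the desired geometrically integral curve $C$ through $S$. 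Once $C$ is in hand, the natural inclusion of indexing sets gives a homomorphism $\psi_C \colon SK_1(C) \to SK_1(X)$ (the $K_2(k(C))$-relation defining $SK_1(C)$ is precisely the relation at the generic point of $C$ in the presentation of $SK_1(X)$), and $\psi_C$ sends the tautological lift $\tilde\alpha := \sum_i a_{x_i} \in SK_1(C)$ to $\alpha$.

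By the reciprocity law for norms, $\psi_C$ restricts to a map $V(C/k) \to V(X/k)$ and $\tilde\alpha \in V(C/k)$. Invoking the assumed curve case of Theorem~\ref{thm:B}, write $V(C/k) = T_C \oplus D_C$ and decompose $\tilde\alpha = \tilde t + \tilde d$, so that $\alpha = \psi_C(\tilde t) + \psi_C(\tilde d) \in V(X/k)_{\tor} + \psi_C(D_C)$. Taking $D := \sum_C \psi_C(D_C)$, where $C$ ranges over all geometrically integral closed curves in $X$, yields a divisible subgroup of $V(X/k)$ (a sum of divisible subgroups is divisible) satisfying $V(X/k) = V(X/k)_{\tor} + D$, which completes the reduction. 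The main obstacle is the Bertini step producing a geometrically integral curve through a prescribed finite set of points when $X$ need not be normal or smooth; the rest of the argument is purely formal once the curve case is granted.
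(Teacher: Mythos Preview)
Your proposal is correct and follows essentially the same route as the paper: define $D$ as the sum of the images of the divisible parts $D_C$ of $V(C/k)$ over all geometrically irreducible integral curves $C\subset X$, then show $V(X/k)/D$ is torsion by lifting any $\alpha\in V(X/k)$ to some $V(C_0/k)$ via a curve $C_0$ through the support of a representative. The Bertini step you flag as the main obstacle is exactly what the paper dispatches by citing Altman--Kleiman \cite[Theorem~1]{AK}, which gives a geometrically irreducible hypersurface section through a prescribed finite closed subscheme for any geometrically irreducible projective scheme of dimension $\geq 2$ over an infinite field; your sketch via iterated hypersurface sections is the right idea and is precisely the content of that reference.
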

\begin{proof}
    Let $X$ be a geometrically irreducible projective integral scheme of dimension $d \geq 1$ over a local field $k$. If $d=1$, then the lemma for $X$ follows from its assumption. We can therefore assume that $d\geq 2$. 
    We let $\mathcal{C}_X$ denote the set of dimension one geometrically irreducible integral closed subschemes of $X$. For $C \in \mathcal{C}_X$,  Theorem \ref{thm:B} holds for
    $C \to \Spec(k)$ and 
    \begin{equation} \label{eqn:red-curve*-1}
        V(C/k) = D_C \oplus T_C,
    \end{equation} 
    where $D_C$ is a divisible group and $T_C$ is a torsion group. By Definition \eqref{eqn:def-SK_1*-1}, there exists a push forward map $(\iota_C)_* \colon SK_1(C) \to SK_1(X)$ for all $C \in \mathcal{C}_X$ such that the following diagram commutes. 
    \begin{equation}\label{eqn:red-curve*-2}
        \xymatrix@C2pc{
        SK_1(C) \ar[r]^-{{\rm Nm_{C/k}}} \ar[d]^-{(\iota_C)_*} & k^{\times} \ar@{=}[d]\\
        SK_1(X) \ar[r]^-{{\rm Nm_{X/k}}} & k^{\times}.
        }
    \end{equation}
    In particular, we have a well-defined map 
    \begin{equation}\label{eqn:red-curve*-3}
       \Phi\colon  \oplus_{C \in\mathcal{C}_X} V(C/k) \to V(X/k).
    \end{equation}
    We let $D \subset V(X/k)$ denote the image $\Phi(\oplus_{C \in \mathcal{C}_X} D_C)$ of the direct sum of the divisible part of $V(C/k)$ in \eqref{eqn:red-curve*-1}. Since the direct sum and the image of divisible groups are again divisible, the subgroup $D$ is a divisible group. It therefore suffices to show that the quotient group  $V(X/k)/D$ is a torsion group. 

    Let $\alpha \in V(X/k)$. By Definition \eqref{eqn:def-SK_1*-1} of $SK_1(X)$, there exist finitely many closed points $x_1, \dots, x_r \in X_{(0)}$ and $a_i \in k(x_i)^{\times}$ such that $\alpha = \sum_i a_i$ and $\prod_i {\rm Nm}_{k}^{k(x_i))} (a_i) =1$. Since $d \geq 2$ and $X$ is geometrically irreducible, it follows from \cite[Theorem~1]{AK} that there exists a geometrically irreducible integral curve $C_0 \subset X$ such that $\{x_1, \dots, x_r\} \subset C_0$. It follows from \eqref{eqn:def-SK_1*-1} for $SK_1(C_0)$ and \eqref{eqn:red-curve*-2} that there exists $\beta = \sum_i a_i \in V(C_0/k)$ such that 
    $\Phi(\beta) = \alpha$. Writing $\beta = \beta_1 + \beta_2$ such that $\beta_1 \in D_{C_{0}}$ and $\beta_2 \in T_{C_{0}}$, we have 
    \begin{equation}
        \overline{\alpha} = \overline{\Phi(\beta_1 + \beta_2)} = \overline{\Phi(\beta_2)} \in V(X/k)/D.
    \end{equation}
    But then the above element is a torsion element as $T_{C_{0}}$ is a torsion group. This proves the claim that  $V(X/k)/D$ is a torsion group and hence completes the proof of the lemma. 
\end{proof}

\subsection{Theorem \ref{thm:B} for geometrically irreducible curves}

In this section, we prove Theorem \ref{thm:B} for geometrically irreducible curves over local fields, and hence by the virtue of Lemma \ref{lem:red-to-curve}, we complete the proof of Theorem \ref{thm:B}. We start by proving that the normalization of a geometrically irreducible scheme is again geometrically irreducible. Recall that a morphism $W \to \Spec(l)$ is said to be geometrically irreducible over $l$ if its base change $W \times_l {\overline{l}}$ is irreducible, where $\overline{l}$ is an algebraic closure of $l$. In the above case, we say $W$ is geometrically irreducible over $l$ (and we suppress morphism from the notations).

\begin{lem} \label{lem:geom-irr-normalization}
    Let $f \colon W \to \Spec(l)$ be an integral geometrically irreducible scheme over a field $l$. 
    Let $\phi \colon W^N \to W$ be the normalization of $W$. Then $W^N$ is also geometrically irreducible over $l$. 
\end{lem}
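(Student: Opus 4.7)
The plan is to exploit the standard characterization that an integral $l$-scheme $V$ is geometrically irreducible over $l$ if and only if $l$ is separably algebraically closed inside the function field $k(V)$ (see, e.g., \cite[Tag 037O]{StackP}). Since normalization preserves the function field, this yields a one-line reduction.

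In detail, because $W$ is integral, the normalization $\phi\colon W^N \to W$ is a finite birational morphism from an integral scheme: locally $W^N$ is the spectrum of the integral closure of $\mathcal{O}_W$ inside $k(W)$, which is a domain, so $W^N$ is irreducible and reduced. Birationality of $\phi$ forces $k(W^N) = k(W)$. By hypothesis $l$ is separably closed in $k(W)$, hence in $k(W^N)$, and the characterization above gives that $W^N$ is geometrically irreducible over $l$.

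As an alternative that avoids invoking the function-field criterion, one can base change to a separable closure $l^{\mathrm{sep}}$ of $l$ sitting inside $\overline{l}$. Since $l^{\mathrm{sep}}/l$ is separable algebraic, $W \times_l l^{\mathrm{sep}}$ is reduced; it is also irreducible by geometric irreducibility of $W$ (combined with the fact that the further purely inseparable extension $\overline{l}/l^{\mathrm{sep}}$ induces a universal homeomorphism on spectra), hence integral. Normalization commutes with the base change $l \to l^{\mathrm{sep}}$ because it is a filtered colimit of finite \'etale morphisms, so $W^N \times_l l^{\mathrm{sep}} \cong (W \times_l l^{\mathrm{sep}})^N$ is integral, in particular irreducible, and then $W^N \times_l \overline{l}$ is irreducible as well, since passing along $\overline{l}/l^{\mathrm{sep}}$ is again a universal homeomorphism. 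The only technical point on this route is the commutation of normalization with the base change $l \to l^{\mathrm{sep}}$, which is classical; with the function-field approach there is essentially no obstacle beyond citing the criterion.
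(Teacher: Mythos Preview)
Your first argument is correct and is essentially the paper's proof: the paper observes that normalization is birational for integral schemes and that geometric irreducibility is a birational invariant (citing \cite[Lemma~054Q]{StackP}), and your function-field criterion from \cite[Tag~037O]{StackP} is exactly the mechanism behind that invariance. Your alternative via base change to $l^{\mathrm{sep}}$ is also valid but unnecessarily elaborate here.
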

\begin{proof}
    The lemma follows because the normalization morphism is a birational morphism 
    for integral schemes and being geometrically irreducible is a birational invariant (see \cite[Lemma~054Q]{StackP}). 
\end{proof}

Let $k$ be a local field and let $f \colon C \to \Spec(k)$ be a geometrically irreducible integral projective curve. We let $\phi \colon C^N \to C$ be the normalization of $C$ and let $g = f \circ \phi \colon C^N \to \Spec(k)$ be the composition map. By Lemma \ref{lem:geom-irr-normalization}, it follows that $C^N$ is a geometrically irreducible (and therefore geometrically connected regular) scheme over $k$. By \cite[Lemma 0BY4]{StackP}, there exists a finite purely inseparable field extension $k'/k$ such that the normalization $C'$ of the integral curve $(C^N \otimes_k k')_{{\rm red}}$ is geometrically irreducible smooth projective curve over $k'$. 
We let $\phi' \colon C' \to C^N$ be the induced finite morphism. We then have the following commutative diagram. 
\begin{equation} \label{eqn:dia-curves}
    \xymatrix@C.8pc
    {C' \ar[r]^-{f'} \ar[d]^-{\phi'}  & \Spec(k') \ar[d]\\
   % C^N_l \ar[r]^-{g_l}  \ar[d]^-{\phi_1}& \Spec(l)\ar[d]\\
    C^N \ar[r]^-{g} \ar[d]^{\phi} & \Spec(k)\ar@{=}[d] \\
    C \ar[r]^-{f} & \Spec(k). }
\end{equation}
By the definition of $SK_1(-)$, we have the following commutative diagram such that the rows are exact sequences, and the middle and the right vertical arrows are the induced push-forward maps.
\begin{equation} \label{eqn:dia-curves*-1}
    \xymatrix@C.8pc{
   0 \ar[r] &  V(C'/k') \ar[d]^-{\psi'} \ar[r] & SK_1(C') \ar[r]^-{f'_*}  \ar[d]^-{\phi'_*} & (k')^{\times} \ar[d]^-{{\rm Nm}^{k'}_k }\\
 %  0 \ar[r] &  V(C^N_l/l) \ar[r] \ar[d]^-{\psi_1} & SK_1(C^N_l) \ar[r]^-{(g_l)_*}  \ar[d]^-{(\phi_1)_*}& l^{\times} \ar[d]^-{{\rm Nm}^{l}_{k}}\\
   0 \ar[r] &  V(C^N/k) \ar[r] \ar[d]^-{\psi} & SK_1(C^N) \ar[r]^-{g_*} \ar[d]^{\phi_*} & k^{\times} \ar@{=}[d] \\
    0 \ar[r] &  V(C/k) \ar[r] & SK_1(C) \ar[r]^-{f_*} & k^{\times}. }
\end{equation}
Since $C'$ is a geometrically connected smooth projective curve over $k'$, we know from \cite[Theorem~1.3]{Yos03} that $V(C'/k') = D' + T'$, where $D'$ is a divisible group and $T'$ is a torsion group (in fact, it can be assumed to be finite). We therefore have to understand the structure of ${\rm coker}(\psi \circ \psi')$. We do it step-wise and understand the cokernel of each of the maps (in the left vertical column in \eqref{eqn:dia-curves*-1}). 

\begin{lem} \label{lem:coker-nor-V}
    With notations as in \eqref{eqn:dia-curves*-1}, the group 
    ${\rm coker}(\psi)$ is a torsion group of finite exponent.  
\end{lem}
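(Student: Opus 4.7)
The plan is to apply the snake lemma to the right portion of diagram~\eqref{eqn:dia-curves*-1} to reduce the statement to a corresponding one for $\Coker(\phi_*\colon SK_1(C^N) \to SK_1(C))$, and then compute that cokernel directly from the presentation of $SK_1$ in \eqref{eqn:def-SK_1*-1}.

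First I would set $I := {\rm Image}(f_*\colon SK_1(C) \to k^{\times})$ and $I^N := {\rm Image}((f\circ\phi)_*\colon SK_1(C^N) \to k^{\times})$. Since $(f\circ\phi)_* = f_*\circ\phi_*$, we have $I^N \subseteq I$, and one obtains a morphism of short exact sequences $0 \to V(C^N/k) \to SK_1(C^N) \to I^N \to 0$ over $0 \to V(C/k) \to SK_1(C) \to I \to 0$ whose right vertical map is the inclusion $I^N \inj I$. The snake lemma then produces an injection $\Coker(\psi) \inj \Coker(\phi_*)$, so it suffices to show that $\Coker(\phi_*)$ is a torsion group of finite exponent.

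Next, since $\phi\colon C^N \to C$ is birational, both schemes share the same function field $K = k(C)$. The presentations $SK_1(C^N) = (\oplus_{y\in C^N_{(0)}} k(y)^{\times})/\partial_{C^N} K_2(K)$ and $SK_1(C) = (\oplus_{x\in C_{(0)}} k(x)^{\times})/\partial_{C} K_2(K)$, together with the compatibility $\partial_{C} = (\sum_{\phi(y)=x} {\rm Nm}_{k(y)/k(x)}) \circ \partial_{C^N}$ built into \eqref{eqn:def-SK_1*-2}, yield the identification $\Coker(\phi_*) \cong \oplus_{x \in C_{(0)}} \, k(x)^{\times}/\sum_{\phi(y)=x} {\rm Nm}_{k(y)/k(x)}(k(y)^{\times})$. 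For $x$ in the smooth locus of $C$ there is a unique $y$ above $x$ with $k(y)=k(x)$ and the norm is the identity, so that summand vanishes; only the finitely many singular points of $C$ contribute nontrivially.

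Finally, at each singular $x$ and each $y$ above $x$, the projection formula ${\rm Nm}_{k(y)/k(x)} \circ \iota = [k(y):k(x)]\cdot{\rm id}_{k(x)^{\times}}$ yields $(k(x)^{\times})^{[k(y):k(x)]} \subseteq {\rm Nm}_{k(y)/k(x)}(k(y)^{\times})$; hence each summand has exponent dividing $[k(y):k(x)]$, and since there are only finitely many $y$'s over finitely many singular $x$'s, the LCM of these degrees annihilates $\Coker(\phi_*)$. The delicate step I expect to require care is the cokernel identification in the third paragraph — verifying that $\phi_*$ is induced by the entry-wise norm on the $\oplus k(-)^{\times}$ level compatibly with the $K_2$-relations — but this should be immediate from the finite-morphism formula for pushforward combined with the definition in \eqref{eqn:def-SK_1*-2}.
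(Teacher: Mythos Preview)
Your proof is correct and follows essentially the same route as the paper's: reduce to $\Coker(\phi_*)$ via a diagram chase on~\eqref{eqn:dia-curves*-1}, then bound the exponent using that $\phi$ is an isomorphism away from finitely many singular points and that ${\rm Nm}_{k(y)/k(x)}\circ\iota$ is multiplication by $[k(y):k(x)]$. Your exact identification $\Coker(\phi_*)\cong\oplus_x k(x)^{\times}/\sum_y {\rm Nm}(k(y)^{\times})$ (via $\partial_C = {\rm Nm}\circ\partial_{C^N}$) is a touch sharper than what the paper records---it only uses the surjection from the direct-sum-level cokernel---but the argument is otherwise identical.
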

\begin{proof}
    By the bottom commutative diagram in \eqref{eqn:dia-curves*-1}, we have ${\rm coker}(\psi)\inj {\rm coker}(\phi_*)$. Since $C$ is integral, the normalization map $\phi \colon C^N \to C$ is a birational surjective map. Let $E \subset C$ be the finite subset such that the induced morphism $\phi \colon \phi^{-1}(C \setminus E) \to C\setminus E$ is an isomorphism. If $E= \emptyset$, then $\phi$ (and hence $\phi_*$) is an isomorphism and the cokernel of $\phi_*$ is therefore zero. We now assume that $E = \{y_1, \dots, y_r\}$ and choose a point $x_i \in \phi^{-1}(y_i)= E_i$ for each $i=1, \dots, r$. Then $k(x_i)/k(y_i)$ is a finite field extension of degree $n_i$. 
    We consider the following commutative diagram. 
    \begin{equation} \label{eqn:coker-nor-V*-1}
        \xymatrix@C.8pc{
      (\oplus_{x \in (C^N \setminus \phi^{-1}(E))} k(x)^{\times})  \oplus 
      (\oplus_{1\leq i \leq r} \oplus_{x \in  E_i } k(x)^{\times})
        \ar@{->>}[r] \ar[d] & SK_1(C^N) \ar[d]^-{\phi_*} \\
        (\oplus_{y \in (C \setminus E)} k(y)^{\times}) \oplus (\oplus_{1\leq i \leq r} k(y_i)^{\times}) 
        \ar@{->>}[r] & SK_1(C),
        }
    \end{equation}
    where the left vertical arrow is given by various norm maps. 
    Since the horizontal arrows are surjective, it suffices to show that the cokernel of the left vertical arrow is a torsion group of finite exponent. Since $E$ is finite and $\phi$ is an isomorphism on $C \setminus E$, it suffices to show that the cokernel of the map $\oplus_{x \in E_i} k(x)^{\times} \to k(y_i)^{\times}$ is a torsion group of finite exponent for each $i = 1, \dots, r$. But this follows because the composition of the following maps is the same as the multiplication by $n_i$.
    \begin{equation}  \label{eqn:coker-nor-V*-2}
       k(x_i)^{\times} \inj \oplus_{x \in E_i} k(x)^{\times} \xrightarrow{\sum_{x\in E_i} {\rm Nm}^{k(x)}_{k(y_i)}} k(y_i)^{\times}. 
    \end{equation}
    This completes the proof.  
\end{proof}

We now study ${\rm coker}(\psi')$. We note that $\phi'$ is a dominant and generically finite proper morphism between regular integral curves. Since any dominant map to a regular integral curve is flat (see \cite[III, Proposition 9.7]{Hart}), it follows that $\phi'$ is flat. But then it is a quasi-finite as well (because the dimension of the fibers doesn't change for a flat morphism). Since $\phi'$ is also proper, it is a finite flat morphism between integral regular curves.

\begin{lem}\label{lem:coker-alt-V}
   With notations as in \eqref{eqn:dia-curves*-1}, the group 
    ${\rm coker}(\psi')$ is a torsion group of finite exponent.
\end{lem}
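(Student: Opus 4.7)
The plan has three main parts: first, establishing that the degree $e$ of $\phi'$ is a $p$-power and bounding ${\rm coker}(\phi'_*)$ by $e$ via a transfer argument; second, observing that the norm ${\rm Nm}^{k'}_k$ is injective because $k'/k$ is purely inseparable in positive characteristic; and finally, reducing ${\rm coker}(\psi')$ to ${\rm coker}(\phi'_*)$ via the snake lemma.

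For the first part, I would observe that since $C'$ is the normalization of $(C^N \otimes_k k')_{{\rm red}}$ and $[k':k] = p^{s_0}$ (as $k'/k$ is purely inseparable), the morphism $\phi'$ factors, after the birational normalization/reduction step, through the finite flat base-change morphism $C^N \otimes_k k' \to C^N$; hence $\deg(\phi') = e = p^{s_1}$ for some $0 \leq s_1 \leq s_0$. If ${\rm Char}(k) = 0$, then $k' = k$ forces $\phi'$ to be an isomorphism and ${\rm coker}(\psi') = 0$, so we may assume ${\rm Char}(k) = p > 0$. By the standard transfer formalism on Quillen/Milnor $K$-theory (the $SK_1$-analogue of \cite[XVII, Proposition~6.2.5]{SGA4}), there is a flat pull-back $(\phi')^{*} \colon SK_1(C^N) \to SK_1(C')$ with $\phi'_* \circ (\phi')^{*} = e \cdot {\rm id}$, so ${\rm coker}(\phi'_*)$ is annihilated by $e$. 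For the norm, since the unique $k$-embedding of $k'$ into $\overline{k}$ is the identity and the inseparability degree is $p^{s_0}$, the classical formula yields ${\rm Nm}^{k'}_k(a) = a^{p^{s_0}}$ for all $a \in (k')^\times$; in a local field of characteristic $p$, the group $(k')^\times$ has no nontrivial $p$-torsion because $x^{p^{s_0}} = 1$ forces $x = 1$ by Frobenius injectivity, so ${\rm Nm}^{k'}_k$ is injective.

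Finally, I would apply the snake lemma to the commutative diagram of short exact sequences
\begin{equation*}
\xymatrix@C.8pc{
0 \ar[r] & V(C'/k') \ar[r] \ar[d]^-{\psi'} & SK_1(C') \ar[r]^-{f'_*} \ar[d]^-{\phi'_*} & I' \ar[r] \ar[d]^-{{\rm Nm}|_{I'}} & 0 \\
0 \ar[r] & V(C^N/k) \ar[r] & SK_1(C^N) \ar[r]^-{g_*} & I \ar[r] & 0,
}
\end{equation*}
where $I' = {\rm image}(f'_*) \subset (k')^\times$ and $I = {\rm image}(g_*) \subset k^\times$, and the right vertical arrow is well-defined because the identity ${\rm Nm}^{k'}_k \circ f'_* = g_* \circ \phi'_*$ (extracted from \eqref{eqn:dia-curves*-1}) forces ${\rm Nm}^{k'}_k(I') \subset I$. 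Since ${\rm Nm}^{k'}_k|_{I'}$ remains injective, the snake lemma yields ${\rm coker}(\psi') \hookrightarrow {\rm coker}(\phi'_*)$, whence ${\rm coker}(\psi')$ is annihilated by $e = p^{s_1}$. The main technical point that I expect to require care is the construction and the composition-equals-degree identity for $(\phi')^{*}$ at the level of $SK_1$ (since the SGA~4 reference cited earlier is phrased for \'etale cohomology); this should follow from the identity $\sum_{x' \in (\phi')^{-1}(x)} [k(x'):k(x)] = e$ at each closed point combined with compatibility of flat pull-back with the Gersten differentials in \eqref{eqn:def-SK_1*-1}.
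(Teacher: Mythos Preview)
Your strategy is sound and parallels the paper's in reducing $\mathrm{coker}(\psi')$ to $\mathrm{coker}(\phi'_*)$ via injectivity of $\mathrm{Nm}^{k'}_k$; your snake-lemma setup with the images $I', I$ makes explicit what the paper asserts directly from the diagram. The two proofs diverge in how they bound $\mathrm{coker}(\phi'_*)$. The paper argues pointwise: for each $x \in (C^N)_{(0)}$ it picks a single preimage $z_0$, uses that $k(x)^\times \hookrightarrow k(z_0)^\times \xrightarrow{\mathrm{Nm}} k(x)^\times$ is multiplication by $[k(z_0):k(x)] \le n := \deg(\phi')$, and then takes $n!$ as a uniform exponent. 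You instead construct a global flat pull-back $(\phi')^*$ on $SK_1$ with $\phi'_* \circ (\phi')^* = e \cdot \mathrm{id}$, obtaining the sharper bound $e$; this is cleaner but needs slightly more input (Rost's cycle-module functoriality, or the identification $SK_1 \cong H^1_{\mathrm{Zar}}(-,\mathcal{K}_2)$ on regular curves together with the projection formula). One correction is needed: the identity $\sum_{x' \in (\phi')^{-1}(x)} [k(x'):k(x)] = e$ that you invoke holds only where $\phi'$ is unramified; the correct degree formula is $\sum_{x'} e_{x'}[k(x'):k(x)] = e$, and since $\phi'$ factors through a purely inseparable base change it will typically be ramified. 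The flat pull-back on the Gersten complex must therefore carry these ramification multiplicities for $\phi'_* \circ (\phi')^* = e$ to hold; with that fix your argument goes through.
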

\begin{proof}
    We first note that $k'/k$ is a purely inseparable extension. We let $[k':k] = p^j$, where $p \geq 1$ is the exponential characteristic of $k$. Recall that in this case the norm map ${\rm Nm}^{k'}_{k} \colon (k')^{\times}\to k^{\times}$ is the power $p^j$-map and it is therefore injective.  By the  top commutative diagram in \eqref{eqn:dia-curves*-1}, we have 
    ${\rm coker}(\psi')\inj {\rm coker}(\phi'_*)$. As shown above, $\phi' \colon C' \to C^N$ is a finite flat surjective morphism between regular integral curves. 
    Let $n$ denote the degree of the map $\phi'$. We now consider the following commutative diagram. 
     \begin{equation} \label{eqn:coker-alt-V*-1}
        \xymatrix@C.8pc{
       \oplus_{z \in  (C')_{(0)}} k(z)^{\times} 
        \ar@{->>}[r] \ar[d] & SK_1(C') \ar[d]^-{\phi'_*} \\
        \oplus_{x \in (C^N)_{(0)}} k(x)^{\times}  
        \ar@{->>}[r] & SK_1(C^N). 
        }
    \end{equation}
    By the above diagram \eqref{eqn:coker-alt-V*-1}, it suffices to show that the cokernel of the left vertical arrow is a torsion group of finite exponent. In other words, it suffices to show that for all $x \in (C^N)_{(0)}$, the cokernel of the induced map $\oplus_{z \in (\phi')^{-1}(x)} k(z)^{\times} \to k(x)^{\times}$ is a torsion group of a fixed exponent $m$ (that does not depend on $x$). We claim it is a torsion group of exponent $ n!$. To see this, we observe that for a $z_0\in (\phi')^{-1}(x)$, the composition 
    \begin{equation}  \label{eqn:coker-alt-V*-2}
        k(x)^{\times} \to k(z_0)^{\times} \inj \oplus_{z \in (\phi')^{-1}(x)} k(z)^{\times} \xrightarrow{\sum {\rm Nm}^{k(z)}_{k(x)}} k(x)^{\times}
    \end{equation}
    is the multiplication by degree $[k(z_0):k(x)]$. In particular, the cokernel of the 
    map $\oplus_{z \in (\phi')^{-1}(x)} k(z)^{\times} \to k(x)^{\times}$ is a torsion group of exponent $[k(z_0):k(x)]$. 
    Since $\phi' \colon C' \to C^N$ is a finite surjective morphism between regular integral curves, it follows from \cite[Example A.3.3]{Fulton} 
    that $[k(z_0):k(x)] \leq n$. (Indeed, by the formula $n = \sum_i e_i f_i$, we have $f_i \leq n$.) This proves the claim that the cokernel of the 
    map $\oplus_{z \in (\phi')^{-1}(x)} k(z)^{\times} \to k(x)^{\times}$ is a torsion group of exponent $n !$ and hence completes the proof of the lemma. 
\end{proof}

We now prove the main result of this section. 

\begin{thm} \label{thm:B-curve}
    Let $C$ be a geometrically irreducible 
     projective integral curve over a local field $k$. Then 
    \begin{equation} \label{eqn:thm-B-curve*-0}
        V(C/k) = D \oplus T,
    \end{equation}
    where  $D$ is a divisible group and $T$ is a torsion group of finite exponent. 
\end{thm}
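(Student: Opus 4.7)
The plan is to propagate Yoshida's structure theorem for the smooth curve $C'$ down the tower $C' \xrightarrow{\phi'} C^N \xrightarrow{\phi} C$ using the finite-exponent cokernel bounds already established. First, since $C'$ is geometrically connected smooth projective over $k'$, \cite[Theorem~1.3]{Yos03} gives a decomposition $V(C'/k') = D' \oplus T'$ with $D'$ divisible and $T'$ a finite (hence of finite exponent) torsion group. Writing $\rho := \psi \circ \psi' \colon V(C'/k') \to V(C/k)$ for the composite in the left column of \eqref{eqn:dia-curves*-1}, I would set
\begin{equation*}
    D := \rho(D') \subseteq V(C/k),
\end{equation*}
which is divisible as the homomorphic image of a divisible group.

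Next, I would control the quotient $V(C/k)/D$ by combining the two cokernel lemmas. By Lemmas~\ref{lem:coker-nor-V} and~\ref{lem:coker-alt-V}, both $\mathrm{coker}(\psi)$ and $\mathrm{coker}(\psi')$ are torsion of finite exponent, so $\mathrm{coker}(\rho)$ is killed by some integer $N_1 \geq 1$. Let $N_2$ be the exponent of $T'$; then $\rho(T')$ is killed by $N_2$. The image of $\rho$ equals $D + \rho(T')$, so $\mathrm{Im}(\rho)/D$ is annihilated by $N_2$ while $V(C/k)/\mathrm{Im}(\rho)$ is annihilated by $N_1$. Piecing these together shows that $V(C/k)/D$ is a torsion group of exponent dividing $N_1 N_2$.

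Finally, I would invoke the standard fact that every divisible subgroup of an abelian group is a direct summand (divisible abelian groups are injective in the category of abelian groups). Applied to $D \subseteq V(C/k)$, this yields a splitting $V(C/k) = D \oplus T$ with $T \cong V(C/k)/D$, and by the previous step $T$ is torsion of finite exponent, as desired. There is no real obstacle beyond keeping track of exponents: all the conceptual work has been done in Yoshida's smooth case and in the two cokernel lemmas, and the argument is essentially a diagram-chase followed by the splitting principle for divisible subgroups.
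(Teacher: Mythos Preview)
Your proposal is correct and follows essentially the same route as the paper: define $D$ as the image of the divisible summand of $V(C'/k')$ under $\psi\circ\psi'$, use Lemmas~\ref{lem:coker-nor-V} and~\ref{lem:coker-alt-V} to bound the exponent of $V(C/k)/D$, and split off $D$. You are in fact slightly more explicit than the paper in justifying the final splitting via injectivity of divisible abelian groups.
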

\begin{proof}
    With notations as in \eqref{eqn:dia-curves*-1}, we have the following exact sequence. 
    \begin{equation} \label{eqn:thm-B-curve*-1}
        {\rm coker}(\psi') \to  {\rm coker}(\psi\circ \psi') \to  {\rm coker}(\psi). 
    \end{equation}
    In Lemmas \ref{lem:coker-nor-V} and \ref{lem:coker-alt-V}, we proved that ${\rm coker}(\psi)$ and ${\rm coker}(\psi')$ are torsion groups of finite exponents. The above exact sequences therefore yield that the cokernel of the map $\psi \circ \psi' \colon V(C'/k') \to V(C/k)$ is also a torsion group of finite exponent, say $n_1$. By \cite[Theorem~1.3]{Yos03}, we have $V(C'/k') = D' \oplus T'$, where $D'$ is a divisible group and $T'$ is a torsion group of exponent $n_2$. Let $D\subset V(C/k)$ be the image of $D'$ under the above map $\psi \circ \psi'$. Then it follows that $D$ is a divisible group and $V(C/k)/D$ is a torsion group of exponent $n_1 n_2$. But then $V(C/k) = D \oplus T$, where $T$ a torsion group of exponent $n_1 n_2$. This completes the proof. 
\end{proof}

Recall that we restated Theorem \ref{thm:B'} as Theorem \ref{thm:B}. We now provide a proof of the theorem. 

\subsection{Proof of Theorem \ref{thm:B'}}
Combine Lemma \ref{lem:red-to-curve} and Theorem \ref{thm:B-curve}. \qed

\section{Application: Unramified Class field theory for regular curves} \label{sec:CFT}

In this section, we give an application of our Theorems \ref{thm:A} and \ref{thm:B-curve}. We let $k$ denote a local field of positive characteristic $p>0$ and let $C$ be a regular projective curve over $k$. We then have a reciprocity map 
\begin{equation} \label{eqn:Rec-map}
    \rho_C \colon SK_1(C) \to \pi_1^{\ab}(C). 
\end{equation}
For the existence of the above reciprocity map see \cite{Saito85}, 
\cite[Section 1.5]{KS83} or \cite[Proof of Lemma~7.2]{TCFT-arXiv}. 
We start by understanding the kernel of the induced modulo $n$-map $\rho_{C,n} \colon SK_1(C)/n \to \pi_1^{\ab}(C)/n$ for $n\geq 1$. Recall that the map $\rho_{C,n}$ is injective if $C$ is smooth projective curve over $k$ by \cite[II, Lemma 5.3]{Saito85} (prime to $p$-case) and \cite[Proposition 3]{KS-UCFT} ($p$-primary case). 

Note that, in these references, the result is proven under the additional assumption that $C$ is a geometrically connected smooth projective curve over $k$. 
But if $C$ is a smooth projective curve over $k$, it is a geometrically connected smooth projective curve over $k'=H^0(X, \mathcal{O}_X)$ which is a 
 finite separable field extension over $k$ (because $C$ is geometrically reduced over $k$). Finally, we note that the claim that $\rho_{C,n}$ is injective is absolute and does not depend on the base field, and hence it also holds for smooth projective curves by the above references (see the proof of \cite[Lemma 11.15]{TCFT-arXiv}). 

We prove that the map $\rho_{C,n}$ is still injective when $C$ is a regular projective curve over a local field $k$ of characteristic $p>0$ and $(n,p)=1$. When $n=p^r$, we need certain additional assumptions on $C$. The main ingredients in both cases are duality theorems. 
We first start with prime to $p$-duality.

\subsection{Duality in prime to $p$-case} \label{sec:duality-I}

Let $k$ be a local field of characteristic $p>0$ and $(n,p)=1$. If $X$ is a smooth projective curve over $k$, then by Poincare duality for $X \otimes_k \overline{k}$ and Tate duality for Galois cohomology of $k$, there exist an isomorphism ${\rm Tr}_X\colon H^4(X, \Z/n(2)) \xrightarrow{\cong} \Z/n$
and for each $ i\geq 0$ and $j \in \Z$, a perfect pairing 
\begin{equation} \label{eqn:PP-Smooth}
    H^i(X, \Z/n(j)) \times H^{4-i}(X, \Z/n(2-j)) \to H^4(X, \Z/n(2)) \xrightarrow[\cong]{{\rm Tr}_X} \Z/n. 
\end{equation}
In this section, we prove that the same holds for a regular projective curve over $k$. 
We start with constructing the trace map. Recall that if $l$ is local field of characteristic $p$ and $(n,p)=1$, there exists an isomorphism (for example, see 
\cite[Theorem~III]{Kato80})
\begin{equation} \label{eqn:Tr-iso-LF}
    \Tr_l \colon H^2(l, \Z/n(1))  \xrightarrow{\cong} \Z/n.
\end{equation}

\begin{lem} \label{lem:Tr-reg-cur}
    Let $C$ be a regular projective integral curve over a local field $k$. Then there exists a surjective homomorphism $\Tr_C \colon  H^4(C, \Z/n(2)) \surj \Z/n$ such that for $x \in C_{(0)}$, the following diagram commutes. 
    \begin{equation} \label{eqn:Tr-reg-cur*-0}
        \xymatrix@C.8pc{
        H^4_x(C, \Z/n(2)) \ar[r] & H^4(C, \Z/n(2)) \ar[d]^-{\Tr_C} 
        \\ 
        H^2(k(x), \Z/n(1)) \ar[r]^-{\Tr_{k(x)}} \ar[u]^-{\cong} & \Z/n,
        }
    \end{equation}
    where the left vertical arrow is Gabber's purity isomorphism (see \cite{Fuji02}) for the closed embedding $\Spec(k(x)) \to C$ of regular schemes. 
\end{lem}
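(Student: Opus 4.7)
The plan is to define $\Tr_C$ by pulling back a trace map from a smooth alteration of $C$ and rescaling by the degree. Since $C$ is integral regular projective, it is already normal, so in the setup of \eqref{eqn:dia-curves} the normalization step is trivial and we obtain a finite surjective morphism $\phi'\colon C' \to C$ with $C'$ a geometrically integral smooth projective curve over a finite purely inseparable extension $k'/k$. Write $d := \deg(\phi')$; since $k'/k$ is purely inseparable, $d$ is a power of $p$, hence a unit in $\Z/n$ under the standing assumption $\gcd(n,p)=1$.

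For the smooth curve $C'/k'$, the trace isomorphism $\Tr_{C'}\colon H^4(C', \Z/n(2)) \xrightarrow{\cong} \Z/n$ is classical: it is assembled from Poincar\'e duality on $C' \otimes_{k'} (k')^{{\rm sep}}$, the Hochschild--Serre edge map for $C' \to \Spec(k')$, and the local trace $\Tr_{k'}$ of \eqref{eqn:Tr-iso-LF}. In particular $\Tr_{C'}$ already satisfies the analogue of \eqref{eqn:Tr-reg-cur*-0} at every closed point of $C'$. We then set
\[
   \Tr_C \;:=\; d^{-1} \cdot \Tr_{C'} \circ (\phi')^* \colon H^4(C, \Z/n(2)) \longrightarrow \Z/n,
\]
which is well-defined because $d$ is invertible in $\Z/n$.

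To verify the commutativity of \eqref{eqn:Tr-reg-cur*-0} at a closed point $x \in C_{(0)}$, let $(\phi')^{-1}(x) = \{z_1, \dots, z_r\}$, and denote by $e_i$ the ramification index and by $f_i := [k(z_i):k(x)]$ the residue degree; finite flatness of $\phi'$ gives $d = \sum_i e_i f_i$. Naturality of the localization sequence along $\phi'$ makes the pullback $H^4_x(C, \Z/n(2)) \to \bigoplus_i H^4_{z_i}(C', \Z/n(2))$ compatible with Gabber's purity, and transports it to the map $H^2(k(x), \Z/n(1)) \to \bigoplus_i H^2(k(z_i), \Z/n(1))$ whose $i$-th component is $e_i \cdot {\rm res}_{k(x)}^{k(z_i)}$, the factor $e_i$ being the length of the scheme-theoretic fiber of $\phi'$ at $z_i$. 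Combining this with the smooth-case compatibility on $C'$ and the classical identity $\Tr_{k(z_i)} \circ {\rm res}_{k(x)}^{k(z_i)} = f_i \cdot \Tr_{k(x)}$ for finite extensions of local fields, we obtain
\[
   \Tr_{C'} \circ (\phi')^* \big|_{H^4_x(C, \Z/n(2))} \;=\; \Big(\sum_i e_i f_i\Big) \cdot \Tr_{k(x)} \;=\; d \cdot \Tr_{k(x)},
\]
and dividing by $d$ yields \eqref{eqn:Tr-reg-cur*-0}. Surjectivity of $\Tr_C$ then follows at once from surjectivity of $\Tr_{k(x)}$.

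The main technical obstacle is justifying the multiplicity formula: that the $z_i$-component of $(\phi')^*$ on support cohomology, transported through Gabber's purity, equals $e_i$ times restriction on Galois cohomology of residue fields. This is the \'etale-cohomological manifestation of the cycle-level identity $(\phi')^*[x] = \sum_i e_i [z_i]$ for a finite flat map of regular curves; it sits within Gabber's absolute purity formalism and is standard in spirit, but requires genuine care because $\phi'$ can be ramified at $x$. Once this compatibility is established, the remainder of the argument is a routine bookkeeping with restrictions and local traces on Brauer groups of local fields.
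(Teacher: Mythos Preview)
Your argument is correct but takes a genuinely different route from the paper's. The paper constructs $\Tr_C$ intrinsically: the localization sequence together with the vanishing $H^4(K,\Z/n(2))=0$ (cohomological dimension of $K$ is $\leq 3$) presents $H^4(C,\Z/n(2))$ as the cokernel of $H^3(K,\Z/n(2))\to\bigoplus_x H^4_x(C,\Z/n(2))$; Gabber's purity identifies each summand with $H^2(k(x),\Z/n(1))$, and the sum $\sum_x \Tr_{k(x)}$ descends to this cokernel by the two-dimensional reciprocity law of Kato--Saito \cite[\S 5]{KS83}. The diagram \eqref{eqn:Tr-reg-cur*-0} then commutes by construction, and no alteration enters at this stage (the paper first invokes an alteration only in Lemma~\ref{lem:Tr-reg-iso}, to upgrade surjectivity to an isomorphism). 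Your extrinsic approach imports $\Tr_C$ from the smooth case via $(\phi')^*$; this has the merit of replacing the reciprocity input by the classical smooth-case duality package, at the price of the ramification-multiplicity compatibility you correctly flag as the crux. One small slip: the diagram \eqref{eqn:dia-curves} you invoke is set up only for geometrically irreducible $C$, which Lemma~\ref{lem:Tr-reg-cur} does not assume; you should instead appeal directly to a de Jong--Gabber alteration of $p$-power degree, exactly as the paper does in the proof of Lemma~\ref{lem:Tr-reg-iso}.
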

\begin{proof}
    Let $K = k(\eta)$, where $\eta$ is the generic point of $C$. Since \'etale cohomological dimension of $K$ is $\leq 3$ by \cite[Lemma~0F0T]{StackP}, we have $H^4(K, \Z/n(2)) =0$. 
    The localization sequence then yields the following exact sequence. 
    \begin{equation} \label{Tr-reg-cur*-1}
        H^3(K, \Z/n(2)) \to \oplus_{x\in C_{(0)}} H^4_x(C, \Z/n(2)) \to  H^4(C, \Z/n(2)) \to 0. 
    \end{equation}
    Since $C$ is regular curve, for $x\in C_{(0)}$, Gabber's purity theorem yields the isomorphism 
    $H^4_x(C, \Z/n(2)) \xleftarrow{\cong} H^2(k(x), \Z/n(1))$. In particular, the isomorphisms $\Tr_{k(x)}$ of \eqref{eqn:Tr-iso-LF} yield isomorphisms
    $\Tr_x \colon H^4_x(C, \Z/n(2)) \xrightarrow{\cong} \Z/n$ for $x\in C_{(0)}$
    such that the following diagram commutes. 
    \begin{equation}\label{Tr-reg-cur*-2}
    \xymatrix@C2pc{
        H^3(K, \Z/n(2)) \ar[r] \ar@{=}[dd]& \oplus_{x\in C_{(0)}} H^4_x(C, \Z/n(2)) 
        \ar[r]^-{\sum_x \Tr_x} & \Z/n\ar@{=}[d] \\
        & \oplus_{x\in C_{(0)}} H^2(k(x), \Z/n(1)) \ar[u]^-{\cong}
         \ar[r]^-{\sum_x \Tr_{k(x)}} & \Z/n \ar@{=}[d] \\
        H^3(K, \Z/n(2)) \ar[r] & \oplus_{x\in C_{(0)}}H^3(K_x, \Z/n(2)) 
        \ar[u]^-{\partial}_-{\cong} \ar[r]^-{\sum_x \Tr_{K_x}}& \Z/n,
        }
    \end{equation}
    where $K_x$ is the function field of the completion of $\mathcal{O}_{X, x}$ and the isomorphisms $\Tr_{K_x}$ are defined so that the bottom right square commutes. 
    By exact sequence \eqref{Tr-reg-cur*-1}, it suffices to show that the composition of the arrows in the top row in \eqref{Tr-reg-cur*-2} is zero. Equivalently, it suffices to show that the composition of the arrows in the bottom row in \eqref{Tr-reg-cur*-2} is zero. 
    But this follows from \cite[Section~5, Page~120]{KS83}. This completes the proof. 
    \end{proof}

We now prove that the map $\Tr_C$ in Lemma~\ref{lem:Tr-reg-cur} is an isomorphism. Instead of directly proving that the surjective map $\Tr_C$ is also injective, we prove that $H^4(C, \Z/n(2))$ injects into $\Z/n$. But this is sufficient because if there is a surjection $\lambda\colon H \surj \Z/n$ such that $H$ is a subgroup of $\Z/n$, then $H = \Z/n$ and the map $\lambda$ is an isomorphism.

\begin{lem}\label{lem:Tr-reg-iso}
    Let notations be as in Lemma~\ref{lem:Tr-reg-cur}. Then the map $\Tr_C \colon  H^4(C, \Z/n(2)) \surj \Z/n$ is an isomorphism. 
\end{lem}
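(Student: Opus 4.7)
My plan is to show that $H^4(C, \Z/n(2))$ injects into $\Z/n$; combined with the surjection $\Tr_C$ produced in Lemma~\ref{lem:Tr-reg-cur}, this is enough because any subgroup of $\Z/n$ admitting a surjection onto $\Z/n$ must equal $\Z/n$, forcing the surjection to be an isomorphism. To produce the injection I would reduce to the smooth case via an alteration and invoke the known Poincar\'e--Tate duality there.

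Concretely, I would invoke Gabber's alteration theorem \cite{Tem17} to obtain a morphism $\pi \colon C' \to C$ of generic degree $e = p^r$ with $C'$ smooth projective over a finite extension $k'/k$; following the adjustments in Section~\ref{sec:SK_1(X)} (passing to a connected component of a further base change by a finite separable extension, as done for the curve $C'$ in the diagram~\eqref{eqn:dia-curves}), one may assume that $C'$ is a geometrically integral smooth projective curve over $k'$. A proper generically finite morphism onto a one-dimensional target has zero-dimensional fibers, so $\pi$ is automatically finite; and since $C$ is regular of dimension one while $C'$ is integral, $\pi$ is also flat by \cite[III, Proposition~9.7]{Hart}. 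Thus $\pi$ is a finite flat surjective morphism of degree $p^r$.

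By \cite[XVII, Proposition~6.2.5]{SGA4}, there is a push-forward $\pi_* \colon H^4(C', \Z/n(2)) \to H^4(C, \Z/n(2))$ satisfying $\pi_* \circ \pi^* = p^r \cdot \mathrm{id}$. Since $(n,p)=1$, the integer $p^r$ is a unit modulo $n$, so $\pi^*$ is injective. On the other hand, because $C'$ is a geometrically integral smooth projective curve over the local field $k'$, Poincar\'e duality for $C' \otimes_{k'} \overline{k'}$ composed with Tate duality for $k'$ gives an isomorphism $H^4(C', \Z/n(2)) \cong \Z/n$ (this is the smooth case of \eqref{eqn:PP-Smooth} with $k$ replaced by $k'$). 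Composing these, $H^4(C, \Z/n(2))$ injects into $\Z/n$, as desired.

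The main technical point to verify is that Gabber's alteration can indeed be arranged to have degree a power of $p$ with $C'$ geometrically integral smooth over a finite extension $k'/k$; this is essentially the construction already carried out in Section~\ref{sec:SK_1(X)}. A secondary concern is compatibility of the two trace maps on $C'$, namely the one from Lemma~\ref{lem:Tr-reg-cur} and the Poincar\'e--Tate trace, but the injection argument above uses only the fact that $H^4(C', \Z/n(2))$ has order exactly $n$, so this compatibility is not strictly needed for the conclusion.
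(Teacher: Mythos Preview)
Your proof is correct and follows essentially the same route as the paper: reduce to exhibiting an injection $H^4(C,\Z/n(2))\hookrightarrow\Z/n$ via a $p$-power alteration $\pi$ (which is automatically finite flat over the regular curve $C$), use $\pi_*\circ\pi^* = p^r$ with $(n,p)=1$ to get injectivity of $\pi^*$, and invoke the smooth Poincar\'e--Tate isomorphism~\eqref{eqn:PP-Smooth} on the source. The only cosmetic difference is that you take extra care to arrange the alteration to be geometrically integral over $k'$, whereas the paper simply uses that any smooth projective curve over a local field satisfies~\eqref{eqn:PP-Smooth}; either way the argument goes through.
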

\begin{proof}
    As mentioned above, it suffices to prove the existence of an injective homomorphism  $H^4(C, \Z/n(2)) \inj \Z/n$. To see this, we let $\pi\colon W \to C$ be an alteration of $C$ of degree $p^r$ for $r \geq 0$. 
    Recall that a dominant and generically finite proper morphism between regular integral curves is a finite flat morphism (see discussion before Lemma~\ref{lem:coker-alt-V}). 
    We therefore conclude that $\pi$ is a finite flat morphism of degree $p^r$  between regular projective curves and $W$ is smooth projective over a finite field extension $l$ over $k$. By \eqref{eqn:PP-Smooth}, it follows that ${\Tr}_W \colon H^4(W, \Z/n(2)) \xrightarrow{\cong} \Z/n$. It therefore suffices to show that the the pull-back map $ \pi^* \colon H^4(C, \Z/n(2))  \to H^4(W, \Z/n(2)) $ is injective. To see this, we observe that $\pi$ is finite and we therefore have a push-forward map $\pi_* \colon H^4(W, \Z/n(2)) \to H^4(C, \Z/n(2)) $ such that $\pi_* \circ \pi^* = p^r \colon  H^4(C, \Z/n(2)) \to  H^4(C, \Z/n(2))$. Since $n$ is co-prime to $p$, the multiplication $p^r$-map is an isomorphism. This implies that the map $\pi^*$ is injective and completes the proof. 
\end{proof}

The cup-product of \'etale cohomology groups and the isomorphism in Lemma \ref{lem:Tr-reg-iso} yield the following pairing for $i \geq 0$ and $j \in \Z$.  
\begin{equation} \label{eqn:Pairing-reg}
    H^i(C, \Z/n(j)) \times H^{4-i}(C, \Z/n(2-j)) \to H^4(C, \Z/n(2)) \xrightarrow[\cong]{{\rm Tr}_C} \Z/n. 
\end{equation}
We next prove that the above pairing is perfect. Since all the above cohomology groups are finite groups of exponent $n$ (see \cite[Remark 3.5 (c)]{Jannsen}), it suffices to show that for all $i \geq 0$ and $j \in \Z$, the induced map $H^i(C, \Z/n(j)) \to
{\rm Hom}(H^{4-i}(C, \Z/n(2-j)), H^4(C, \Z/n(2)))$ is injective. This is the main step in proving the following lemma. 

\begin{thm} \label{thm:PP-reg}
    Let $C$ be a regular projective integral curve over a local field $k$. Then the pairing \eqref{eqn:Pairing-reg} is a perfect pairing of finite abelian groups. 
\end{thm}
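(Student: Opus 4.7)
My plan is to deduce the perfect pairing from Poincaré--Tate duality on the alteration $\pi\colon W \to C$ constructed in the proof of Lemma~\ref{lem:Tr-reg-iso}. Since each $H^i(C, \Z/n(j))$ is finite and annihilated by $n$, as noted in the hint preceding the statement, the pairing is perfect precisely when, for every pair $(i,j)$, the induced map
\[
\alpha_{i,j}\colon H^i(C, \Z/n(j)) \to \mathrm{Hom}\bigl(H^{4-i}(C, \Z/n(2-j)),\ \Z/n\bigr)
\]
is injective; applying this injectivity simultaneously to $(i,j)$ and $(4-i, 2-j)$ forces the two maps into isomorphisms by a cardinality comparison. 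So it suffices to prove injectivity of $\alpha_{i,j}$ for every $(i,j)$.

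Recall that $\pi\colon W \to C$ is a finite flat morphism of degree $p^r$ between regular projective curves, with $W$ smooth projective over a finite extension $l/k$. Applying the projection formula, $\pi_* \circ \pi^* = p^r$ on $H^i(C, \Z/n(j))$, and since $(n,p)=1$ this is multiplication by a unit; hence $\pi^*$ is injective (and $\pi_*$ surjective) on every étale cohomology group with $\Z/n(j)$-coefficients. On the smooth curve $W/l$, the pairing \eqref{eqn:PP-Smooth} with $\Tr_W\colon H^4(W, \Z/n(2)) \xrightarrow{\cong} \Z/n$ is perfect.

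The main tool is the projection formula $\pi_*(\pi^*\alpha \cup \beta) = \alpha \cup \pi_*\beta$, together with the compatibility of trace maps under the finite flat morphism $\pi$: namely, $\Tr_C \circ \pi_* = u \cdot \Tr_W$ as maps $H^4(W, \Z/n(2)) \to \Z/n$, for some unit $u \in (\Z/n)^\times$. This identification holds because both sides are surjections from a cyclic group of order $n$ onto $\Z/n$, so must differ by a group automorphism; it can also be checked directly using Lemma~\ref{lem:Tr-reg-cur} by localizing at closed points, where it reduces to the compatibility $\Tr_{k(x)} \circ \mathrm{Nm}_{k(z)/k(x)} = \Tr_{k(z)}$ for $z \in \pi^{-1}(x)$, a standard property of Tate traces of local fields. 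I expect this compatibility step to be the most delicate, though it is essentially formal given the explicit description of $\Tr_C$ already established.

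Assembling the argument, suppose $\alpha \in H^i(C, \Z/n(j))$ satisfies $\Tr_C(\alpha \cup \gamma) = 0$ for every $\gamma \in H^{4-i}(C, \Z/n(2-j))$. For each $\beta \in H^{4-i}(W, \Z/n(2-j))$, we compute
\[
u \cdot \Tr_W(\pi^*\alpha \cup \beta) = \Tr_C\bigl(\pi_*(\pi^*\alpha \cup \beta)\bigr) = \Tr_C(\alpha \cup \pi_*\beta) = 0,
\]
so $\pi^*\alpha$ pairs trivially with every class in $H^{4-i}(W, \Z/n(2-j))$. By perfectness of the pairing on $W$, we obtain $\pi^*\alpha = 0$, and injectivity of $\pi^*$ forces $\alpha = 0$. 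Thus $\alpha_{i,j}$ is injective for every $(i,j)$, which, by the reduction of the first paragraph, completes the proof of the theorem.
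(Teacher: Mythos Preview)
Your proof is correct and follows essentially the same route as the paper: both use the finite flat alteration $\pi\colon W\to C$ of degree $p^r$, the projection formula, the injectivity of $\pi^*$ from $\pi_*\circ\pi^*=p^r$, and perfectness on $W$ to show each $\alpha_{i,j}$ is injective, then conclude by a cardinality count. Your trace-compatibility step $\Tr_C\circ\pi_*=u\cdot\Tr_W$ is exactly the paper's observation that $\pi_*\colon H^4(W,\Z/n(2))\to H^4(C,\Z/n(2))$ is an isomorphism between two copies of $\Z/n$, and your explicit computation is the ``diagram chase'' the paper invokes.
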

\begin{proof}
    We let $\pi \colon W \to C $ be an alteration of $C$ of degree $p^r$ for $r \geq 0$. Since $\pi$ is a finite flat morphism, the projection formula for $\pi_*$ yields the following commutative diagram. 
    \begin{equation} \label{eqn:PP-reg*-1}
       \xymatrix@C1.5pc{
        H^i(W, \Z/n(j)) \times H^{4-i}(W, \Z/n(2-j)) \ar[r]
        \ar@<10ex>@{->>}[d]^-{\pi_*}&  H^4(W, \Z/n(2))  \ar[d]^-{\pi_*}_-{\cong}
        \ar[r]_-{\cong}^-{{\rm Tr}_W} & \Z/n\\
        H^i(C, \Z/n(j)) \times H^{4-i}(C, \Z/n(2-j)) \ar[r] 
        \ar@<10ex>@{^{(}->}[u]^-{\pi^*}&  H^4(C, \Z/n(2)) \ar[r]_-{\cong}^-{{\rm Tr}_C} & \Z/n, 
       }
    \end{equation}
    where $i \geq 0$ and $j \in \Z$. We have seen in the proof of Lemma \ref{lem:Tr-reg-iso} that the right vertical arrow in \eqref{eqn:PP-reg*-1} is a surjective map and both the groups are isomorphic to $\Z/n$. This implies that the right vertical arrow is an isomorphism. Since $\pi_* \circ \pi^* = p^r$ and $n$ is co-primes to $p$, the left vertical arrow in \eqref{eqn:PP-reg*-1} is injective and the middle vertical arrow is surjective. Since $W$ is a smooth projective curve over a finite extension $k'$ over $k$, the top row in \eqref{eqn:PP-reg*-1} is a perfect pairing by \eqref{eqn:PP-Smooth}. 
    A simple diagram chase now shows that the induced map 
    $H^i(C, \Z/n(j)) \inj {\rm Hom}(H^{4-i}(C, \Z/n(2-j)), H^4(C, \Z/n(2)))$ is injective for each $i\geq 0$ and $j \in \Z$. By a cardinality argument, we obtain that the above map is an isomorphism for each $i\geq 0$ and $j \in \Z$. This completes the proof. 
\end{proof}

\begin{cor} \label{cor:H-3==pi-1}
    Let $C$ be a regular projective integral curve over a local field $k$. Then there exists an isomorphism $\theta_C \colon H^3(C, \Z/n(2) \xrightarrow{\cong} {\rm Hom}(H^1(C, \Z/n), \Z/n) \cong \pi^{\ab}_1(C)/n$ such that for $x\in C_{(0)}$, its composition with the arrows
    \begin{equation} \label{eqn:H-3==pi-1*1}
        k(x)^{\times}/n \xrightarrow{\psi_{x,n}}   H^1(k(x), \Z/n(1)) \xrightarrow[\cong]{(\iota_x)_*}  H^3_x(C, \Z/n(2)) \to  H^3(C, \Z/n(2))
    \end{equation}
    is the same as the composition $k(x)^{\times}/n \xrightarrow{\rho_x} \pi_1^{\ab}(k(x))/n \to \pi_1^{\ab}(C)/n$, where $\rho_x$ is the reciprocity map for the local field $k(x)$ and the map $\psi_{x,n}$ is the Norm residue map. 
\end{cor}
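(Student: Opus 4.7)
The plan is to construct $\theta_C$ as the composition of two canonical isomorphisms. Theorem~\ref{thm:PP-reg} at $(i,j)=(3,2)$ yields a perfect pairing of finite abelian groups
\[
H^1(C, \Z/n) \times H^3(C, \Z/n(2)) \to \Z/n,
\]
and hence an isomorphism $H^3(C, \Z/n(2)) \xrightarrow{\cong} {\rm Hom}(H^1(C, \Z/n), \Z/n)$. Combining it with the identification ${\rm Hom}(H^1(C, \Z/n), \Z/n) \cong \pi_1^{\ab}(C)/n$ coming from $H^1(C, \Z/n) = {\rm Hom}_{{\rm cont}}(\pi_1^{\ab}(C), \Z/n)$ and finite Pontryagin duality, applied to the finite group $\pi_1^{\ab}(C)/n$ (finiteness follows from Theorem~\ref{thm:A} together with Proposition~\ref{prop:Struc-geom-*1} and the finiteness of $G_k^{\ab}/n$), produces the desired $\theta_C$.

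To verify the compatibility asserted in~\eqref{eqn:H-3==pi-1*1}, I would dualize the claim: fix $a \in k(x)^{\times}$ and a test character $\chi \in H^1(C, \Z/n)$, and check that the two prescriptions produce the same value in $\Z/n$ when evaluated against $\chi$. By the construction of $\theta_C$, the image of $a$ under the top composition in~\eqref{eqn:H-3==pi-1*1} followed by $\theta_C$ pairs with $\chi$ to
\[
{\rm Tr}_C\bigl(\chi \cup (\iota_x)_* \psi_{x,n}(a)\bigr),
\]
where $(\iota_x)_{*}$ abbreviates Gabber's purity isomorphism followed by the natural map $H^3_x(C, \Z/n(2)) \to H^3(C, \Z/n(2))$. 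The projection formula for the Gysin push-forward at the regular closed immersion $\iota_x$ rewrites this as ${\rm Tr}_C \circ (\iota_x)_{*}\bigl(\iota_x^{*}(\chi) \cup \psi_{x,n}(a)\bigr)$, and the trace compatibility of Lemma~\ref{lem:Tr-reg-cur} (commutativity of~\eqref{eqn:Tr-reg-cur*-0}) collapses this to ${\rm Tr}_{k(x)}\bigl(\iota_x^{*}(\chi) \cup \psi_{x,n}(a)\bigr)$. On the other hand, the composition $k(x)^{\times}/n \xrightarrow{\rho_x} \pi_1^{\ab}(k(x))/n \to \pi_1^{\ab}(C)/n$ sends $a$ to a class whose pairing with $\chi$ equals $\rho_x(a)(\iota_x^{*}\chi)$, and by the very definition of the local reciprocity map as the Pontryagin dual of the Tate duality pairing, $\rho_x(a)(\iota_x^{*}\chi) = {\rm Tr}_{k(x)}\bigl(\psi_{x,n}(a) \cup \iota_x^{*}(\chi)\bigr)$. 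The two expressions agree.

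The main obstacle is the projection-formula step: what is used is that under Gabber's purity $H^1(k(x), \Z/n(1)) \xrightarrow{\cong} H^3_x(C, \Z/n(2))$, cup product with a class pulled back from $C$ corresponds to the ordinary cup product on $\Spec(k(x))$ followed by the Gysin push-forward. This is a standard module property of Gabber's purity isomorphism \cite{Fuji02}, but some care is required to pin down sign and twist conventions; once it is granted, the remaining identification is a mechanical comparison that parallels the argument in diagram~\eqref{Tr-reg-cur*-2} and the classical characterization of local reciprocity via the Tate pairing.
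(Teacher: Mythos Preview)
Your proof is correct and follows essentially the same route as the paper's: define $\theta_C$ via the perfect pairing of Theorem~\ref{thm:PP-reg}, then verify the compatibility by combining the $H^*(C,\Z/n)$-linearity of the Gysin push-forward (the paper cites \cite[VI, Proposition~6.5]{Milne}, adapted through Gabber's purity isomorphism, for precisely the projection-formula step you single out) with the trace compatibility of Lemma~\ref{lem:Tr-reg-cur} and the identification of local reciprocity with the Tate pairing. One minor wrinkle: your appeal to Theorem~\ref{thm:A} and Proposition~\ref{prop:Struc-geom-*1} for the finiteness of $\pi_1^{\ab}(C)/n$ is not directly available, since $C$ is not assumed geometrically connected in this corollary; the finiteness you need follows more simply from that of $H^1(C,\Z/n)$ (recorded just before Theorem~\ref{thm:PP-reg}, citing \cite{Jannsen}) together with Pontryagin duality for profinite groups.
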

\begin{proof}
    For $x\in C_{(0)}$, we consider the following diagram of pairings.  
    \begin{equation}\label{eqn:H-3==pi-1*2}
        \xymatrix@C2pc{
        H^3(C, \Z/n(2)) \times H^{1}(C, \Z/n) \ar[r] 
         \ar@<10ex>[d]^-{\iota_x^*}
        &  H^4(C, \Z/n(2)) \ar[r]_-{\cong}^-{{\rm Tr}_C} & \Z/n \ar@{=}[d]\\
      %  H^3_x(C, \Z/n(2)) \hspace{5pc} & H^4_x(C, \Z/n(2)) \\
        H^1(k(x), \Z/n(1)) \times H^{1}(k(x), \Z/n) \ar[r] 
        \ar@<7ex>[u]^-{(\iota_x)_*}
        &  H^2(k(x), \Z/n(1)) \ar[r]_-{\cong}^-{{\rm Tr}_{k(x)}} 
         \ar[u]^-{(\iota_x)_*} & \Z/n,  }
    \end{equation}
    where $\iota_x \colon \Spec(k(x)) \to C$ is the inclusion of the closed point. Note that the left and the right vertical arrows are respective Gysin maps. Recall from Lemma~\ref{lem:Tr-reg-cur} that the right square in \eqref{eqn:H-3==pi-1*2} commutes by the definition of $\Tr_C$. Moreover, the left diagram commutes because the Gysin map is $H^*(C, \Z/n)$-linear with its action on $H^*(k(x), \Z/n)$ via the pull-back map by the proof of \cite[VI, Proposition~6.5]{Milne}. Indeed, in loc. cit., the same result is proven when for closed immersion of smooth schemes (over a separable field) but they only use that the Gysin map on cohomology groups is induced by an isomorphism $\mathcal{H}^2_x(\Z/n) \cong \Z/n(-1)$ which is known by \cite{Fuji02} (see the diagram on \cite[VI, Pages~250-251]{Milne}). 
    Since ${\rm Hom}(H^1(C, \Z/n), \Z/n) \cong \pi_1^{\ab}(C)/n$, the commutative diagram \eqref{eqn:H-3==pi-1*2} of perfect pairings gives the following commutative square. 
    \begin{equation}\label{eqn:H-3==pi-1*3}
        \xymatrix@C2pc{
       & H^3(C, \Z/n(2)) \ar[r]^-{\theta_C}_-{\cong}& \pi^{\ab}_1(C)/n\\
      k(x)^{\times}/n \ar[r]^-{\psi_{x,n}} &  H^1(k(x), \Z/n(1)) \ar[r]^-{\theta_x} \ar[u]^-{(\iota_x)_*} & \pi^{\ab}_1(k(x))/n \ar[u]^-{(\iota_x)_*}. 
        }
    \end{equation}
  The lemma now follows because the composition of the arrows in the bottom row in \eqref{eqn:H-3==pi-1*3} is the same as the reciprocity map for the local field $k(x)$. 
\end{proof}

\begin{thm} \label{thm:Res-inj-mod-n}
    Let $C$ be a regular projective integral curve over a local field $k$ of positive characteristic $p>0$ and let $n \geq 0$ be an integer such that $(p,n)=1$. The induced map $\rho_{C,n} \colon SK_1(C)/n \to \pi_1^{\ab}(C)/n$ is injective. 
\end{thm}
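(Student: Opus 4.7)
The plan is to transport $\rho_{C,n}$ into a statement about \'etale cohomology via Corollary~\ref{cor:H-3==pi-1}, and then deduce injectivity from the \'etale localization exact sequence on $C$ together with the Merkurjev--Suslin theorem. Concretely, by Corollary~\ref{cor:H-3==pi-1}, there is an isomorphism $\theta_C \colon H^3(C, \Z/n(2)) \xrightarrow{\cong} \pi_1^{\ab}(C)/n$ such that, for every $x \in C_{(0)}$, the composite $\theta_C \circ (\iota_x)_* \circ \psi_{x,n}$ agrees with the push-forward to $C$ of the local reciprocity map $k(x)^\times/n \to \pi_1^{\ab}(k(x))/n$. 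Summing over closed points, $\theta_C^{-1} \circ \rho_{C,n}$ is identified with the map $SK_1(C)/n \to H^3(C, \Z/n(2))$ induced by $\sum_{x \in C_{(0)}} (\iota_x)_* \circ \psi_{x,n}$.

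Since $(n,p)=1$, each norm residue $\psi_{x,n} \colon k(x)^\times/n \xrightarrow{\cong} H^1(k(x), \Z/n(1))$ is an isomorphism (Hilbert~90), so the injectivity of $\rho_{C,n}$ reduces to showing that the kernel of $\sum_x (\iota_x)_* \colon \bigoplus_x H^1(k(x), \Z/n(1)) \to H^3(C, \Z/n(2))$, pulled back to $\bigoplus_x k(x)^\times/n$ via the $\psi_{x,n}^{-1}$, coincides with the image of the tame-symbol boundary $\partial^K \colon K_2(K)/n \to \bigoplus_x k(x)^\times/n$, where $K = k(C)$. To see this I would invoke the localization exact sequence
\begin{equation*}
H^2(K, \Z/n(2)) \xrightarrow{\partial^{\mathrm{et}}} \bigoplus_{x \in C_{(0)}} H^3_x(C, \Z/n(2)) \to H^3(C, \Z/n(2)),
\end{equation*}
identify $H^3_x(C, \Z/n(2)) \cong H^1(k(x), \Z/n(1))$ via Gabber's purity (exactly as in the proof of Lemma~\ref{lem:Tr-reg-cur}), and identify $H^2(K, \Z/n(2)) \cong K_2(K)/n$ via the Merkurjev--Suslin theorem (applicable since ${{\rm Char}}(K) = p$ is coprime to $n$, and $K$ has finite cohomological dimension).

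What remains is the compatibility of $\partial^{\mathrm{et}}$ with the $K$-theoretic tame symbol $\partial^K$ under the above isomorphisms. This is precisely the compatibility already implicitly used at the bottom of the diagram in the proof of Lemma~\ref{lem:Tr-reg-cur} (via \cite[Section~5, Page~120]{KS83}); once granted, the localization sequence immediately gives the injection $SK_1(C)/n \hookrightarrow H^3(C, \Z/n(2))$, and by the preceding paragraph this coincides with $\theta_C^{-1} \circ \rho_{C,n}$. I expect the main obstacle to be making this boundary-map compatibility precise with the correct signs and normalizations, because the Gysin isomorphism at each $x \in C_{(0)}$ is supplied by Gabber's purity rather than by the more familiar smooth purity; however, since $C$ is regular and each $\Spec k(x) \hookrightarrow C$ is a regular closed immersion, this is a standard (if delicate) verification.
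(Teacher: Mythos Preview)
Your proposal is correct and follows essentially the same route as the paper: both arguments assemble the localization exact sequence $H^2(K,\Z/n(2)) \to \bigoplus_x H^3_x(C,\Z/n(2)) \to H^3(C,\Z/n(2))$, identify the local terms via Gabber purity and Kummer/Hilbert~90, invoke Merkurjev--Suslin for the generic-point term, and then use Corollary~\ref{cor:H-3==pi-1} to match the resulting injection with $\rho_{C,n}$. The paper simply asserts that ``the left outer square commutes'' for the boundary-map compatibility you flag as the delicate point; your honest acknowledgment that this verification (tame symbol versus \'etale boundary under the purity isomorphism) requires care is, if anything, more careful than the paper's treatment.
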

\begin{proof}
   As before, we let $K$ denote the field of function of $C$. We consider the following diagram of exact sequences. 
    \begin{equation} \label{eqn:Res-inj-mod-n*-1}
    \xymatrix@C.8pc{
        K_2(K)/n \ar[r] \ar[d]^-{\psi_{K,n}}_-{\cong}& \oplus_{x\in C_{(0)}} k(x)^{\times}/n \ar[r] \ar[d]^-{\psi_{x,n}}_-{\cong} &  SK_1(C)/n \ar[r] 
        \ar@{-->}[dd]& 0\\
        H^2(K, \Z/n(2)) \ar[r] \ar@{=}[d]& \oplus_{x\in C_{(0)}} H^1(k(x), \Z/n(1)) \ar[d]^-{(\iota_x)_*}_-{\cong}& \\
         H^2(K, \Z/n(2)) \ar[r] & \oplus_{x\in C_{(0)}} H^3_x(C, \Z/n(2)) \ar[r] & H^3(C, \Z/n(2))  \ar[d]^-{\theta_C}_-{\cong} & \\
         & &  \pi_1^{\ab}(C)/n,
        }
    \end{equation}
    where the top left and middle vertical arrows are Norm residue maps. It is known that the left outer square commutes. The Kummer sequence implies that the map $\psi_{x,n}$ is an isomorphism, and the map $(i_{x})_{\ast}$ is an isomorphism by the Lemma \ref{lem:Tr-reg-cur}. This shows that the middle vertical arrows are isomorphisms, and the left top vertical arrow is an isomorphism by the Merkurjev-Suslin Theorem \cite{Merkurjev-Suslin}. An easy diagram chase tells us that the dotted arrow exists and is injective. But then we are done because by 
    Corollary~\ref{cor:H-3==pi-1} the composition of the dotted arrow with $\theta_C$ is the same as the reciprocity map $\rho_{C,n} \colon SK_1(C)/n \to \pi_1^{\ab}(C)/n$. This completes the proof of the theorem. 
\end{proof}

\subsection{Duality in $p$-primary case} \label{sec:duality-II}

Let $C$ and other notations be as in Section \ref{sec:duality-I}. Assume further that $C$ is geometrically connected and is generically smooth (for example, if $C$ is the normalization of a compactification of a geometrically connected smooth quasi-projective curve over $k$). Then for $i, j \geq 0$ and $m \geq 1$, \cite[Theorem~4.7]{KRS23} gives us the following perfect pairing. 
\begin{equation} \label{eqn:PP-II*-1}
    H^i(C, W_m \Omega^{j}_{C, \log}) \times H^{2-i}(C, W_m \Omega^{2-j}_{C, \log}) \to H^{2}(C, W_m \Omega^{2}_{C, \log}) \xrightarrow[\cong]{{\rm Tr}_C} \Z/p^m. 
\end{equation}
As a corollary, we get
\begin{cor} \label{cor:PP-II}
    There exists an isomorphism 
    \[
    \theta'_C \colon H^1(C, W_m \Omega^2_{C, \log}) \xrightarrow{\cong} {\rm Hom}(H^1(C, \Z/p^m), \Z/p^m) \cong \pi^{\ab}_1(C)/p^m
    \]
    such that for $x\in C_{(0)}$, its composition with the arrows
    \begin{equation} \label{eqn:PP-II*-2}
        k(x)^{\times}/p^m \xrightarrow{{\rm d log}}   H^0(k(x), W_m \Omega^1_{k(x), \log}) \xrightarrow[\cong]{(\iota_x)_*}   H^1_x(C, W_m \Omega^2_{C, \log}) \to   H^1(C, W_m \Omega^2_{C, \log})    \end{equation}
    is the same as the composition $k(x)^{\times}/p^m \xrightarrow{\rho_x} \pi_1^{\ab}(k(x))/p^m \to \pi_1^{\ab}(C)/p^m$, where $\rho_x$ is the reciprocity map for the local field $k(x)$ and the middle arrow  is the isomorphism given by \cite{Shiho}. 
\end{cor}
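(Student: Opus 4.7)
The plan is to follow the proof of Corollary~\ref{cor:H-3==pi-1} essentially verbatim, with the \'etale duality pairing of Theorem~\ref{thm:PP-reg} replaced by the logarithmic de Rham--Witt pairing \eqref{eqn:PP-II*-1}. First, I would specialize \eqref{eqn:PP-II*-1} to bidegree $(i,j)=(1,2)$, and use the identification $W_m\Omega^0_{C,\log} = \Z/p^m$ to extract a perfect pairing
\begin{equation*}
H^1(C, W_m\Omega^2_{C,\log}) \times H^1(C, \Z/p^m) \to H^2(C, W_m\Omega^2_{C,\log}) \xrightarrow[\cong]{\Tr_C} \Z/p^m
\end{equation*}
of finite abelian groups. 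Passing to the adjoint and invoking ${\rm Hom}(H^1(C, \Z/p^m), \Z/p^m) \cong \pi_1^{\ab}(C)/p^m$ then produces the desired isomorphism $\theta'_C$.

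For the compatibility at a closed point $x \in C_{(0)}$, I would mimic the diagram of pairings \eqref{eqn:H-3==pi-1*2}. The top row is the global pairing above, the bottom row is the local Tate pairing for $k(x)$ valued in $H^1(k(x), W_m\Omega^1_{k(x),\log})$, the left vertical arrow is $\iota_x^* \colon H^1(C, \Z/p^m) \to H^1(k(x), \Z/p^m)$, and the right vertical (going bottom to top) is the logarithmic Gysin map $(\iota_x)_* \colon H^1(k(x), W_m\Omega^1_{k(x),\log}) \to H^2(C, W_m\Omega^2_{C,\log})$ arising from Shiho's logarithmic purity. The commutativity of the two squares reduces to two claims: (i) that $\Tr_C$ is compatible with the local Kato trace $\Tr_{k(x)}$ through the Gysin map, exactly as in Lemma~\ref{lem:Tr-reg-cur}; and (ii) the projection formula for the logarithmic Gysin map $(\iota_x)_*$, namely its $H^*(C, \Z/p^m)$-linearity with the action on the local cohomology pulled back along $\iota_x^*$.

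Granted (i) and (ii), adjoining the perfect pairings produces a commutative square which shows that the composition in \eqref{eqn:PP-II*-2} agrees with the composition $k(x)^{\times}/p^m \xrightarrow{\rho_x} \pi_1^{\ab}(k(x))/p^m \to \pi_1^{\ab}(C)/p^m$, because by the classical Bloch--Kato identification the local reciprocity at $k(x)$ factors as $d\log$ followed by the Tate duality pairing against $W_m\Omega^1_{k(x),\log}$. The main technical point to verify is (i): the compatibility of the global Trace $\Tr_C$ of \cite{KRS23} with the local Kato trace $\Tr_{k(x)}$ through the logarithmic Gysin map. This is the $p$-primary counterpart of the direct check we carried out via Gabber's purity in Lemma~\ref{lem:Tr-reg-cur}, and should amount to carefully citing the construction of $\Tr_C$ in \cite{KRS23} together with Shiho's purity isomorphism $\mathcal{H}^1_x(W_m\Omega^2_{C,\log}) \cong W_m\Omega^1_{k(x),\log}$; the projection formula (ii) is then a formal consequence of this same purity.
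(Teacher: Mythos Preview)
Your proposal is correct and matches the paper's proof essentially verbatim: the paper also sets up the analogue of diagram~\eqref{eqn:H-3==pi-1*2} with the logarithmic de~Rham--Witt pairing, cites \cite[Proposition~4.4]{KRS23} for your compatibility (i) and follows \cite[VI, Proposition~6.5]{Milne} via Shiho's purity isomorphism $\rho^{j+1,\log}_{\iota,m}$ for the projection formula (ii). The only refinement is that (i) is a direct citation rather than something to be rechecked.
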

\begin{proof}
      For $x\in C_{(0)}$, we consider the following diagram of pairings.  
    \begin{equation}\label{eqn:PP-II*-3}
        \xymatrix@C2pc{
        H^1(C, W_m \Omega^2_{C, \log}) \times H^{1}(C, \Z/p^m) \ar[r] 
         \ar@<10ex>[d]^-{\iota_x^*}
        &  H^2(C, W_m \Omega^2_{C, \log}) \ar[r]_-{\cong}^-{{\rm Tr}_C} & \Z/p^m \ar@{=}[d]\\
      %  H^3_x(C, \Z/n(2)) \hspace{5pc} & H^4_x(C, \Z/n(2)) \\
        H^0(k(x), W_m \Omega^1_{k(x), \log}) \times H^{1}(k(x), \Z/p^m) \ar[r] 
        \ar@<7ex>[u]^-{(\iota_x)_*}
        & H^1 (k(x), W_m \Omega^1_{k(x), \log}) \ar[r]_-{\cong}^-{{\rm Tr}_{k(x)}} 
         \ar[u]^-{(\iota_x)_*} & \Z/p^m.}
    \end{equation}
    By \cite[Proposition~4.4]{KRS23}, the right square in \eqref{eqn:PP-II*-3} is commutative. Note that the leftmost vertical arrow and the third from the left vertical arrow are defined using Gysin isomorphisms 
    $H^i(k(x), W_m \Omega^j_{k(x), \log}) \xrightarrow{\cong} H^{i+1}_x(C, W_m\Omega^{j+1}_{C,\log})$ for $i=0, 1$ and $j=1$ of \cite{Shiho}. By the loc. cit. (see Pages~587, 590), the above isomorphisms are induced by the isomorphism 
    $\rho^{j+1,\log}_{\iota, m}\colon W_m \Omega^j_{k(x), \log} \xrightarrow{\cong} \mathcal{H}_x^1(W_m \Omega^{j+1}_{C, \log})$. We can now follow the steps of the proof of \cite[VI, Propsition~6.5]{Milne} to conclude that the left diagram in \eqref{eqn:PP-II*-3} commutes. We therefore get the following commutative square. 
    %such that the horizontal arrows are isomorphisms. 
  %  of perfect pairing gives the following commutative square. 
    \begin{equation}\label{eqn:PP-II*-4}
        \xymatrix@C2pc{
       &H^1(C, W_m \Omega^2_{C, \log}) \ar[r]^-{\theta'_C}_-{\cong}& \pi^{\ab}_1(C)/p^m\\
      k(x)^{\times}/p^m \ar[r]^-{{\rm d log}} & H^0(k(x), W_m \Omega^1_{k(x), \log}) \ar[r]^-{\theta'_x} \ar[u]^-{(\iota_x)_*} & \pi^{\ab}_1(k(x))/p^m \ar[u]^-{(\iota_x)_*}. 
        }
    \end{equation}
  The lemma now follows because the composition of the arrows in the bottom row in \eqref{eqn:PP-II*-4} is the same as the reciprocity map for the local field $k(x)$. 
\end{proof}

\begin{thm} \label{thm:Res-inj-mod-p}
    Let $C$ be a regular geometrically connected projective integral curve over a local field $k$ of positive characteristic $p>0$ and let $m \geq 0$ be an integer. Assume that $C$ is generically smooth. Then the induced map $\rho_{C,p^m} \colon SK_1(C)/p^m \to \pi_1^{\ab}(C)/p^m$ is injective. 
\end{thm}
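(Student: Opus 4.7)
The plan is to imitate the proof of Theorem~\ref{thm:Res-inj-mod-n} step by step, substituting logarithmic de Rham--Witt cohomology $W_m\Omega^j_{\log}$ for \'etale cohomology with $\Z/n(j)$ coefficients and Corollary~\ref{cor:PP-II} for Corollary~\ref{cor:H-3==pi-1}. Let $K$ denote the function field of $C$. I would set up the diagram whose top row is the defining presentation
\[
K_2(K)/p^m \to \bigoplus_{x \in C_{(0)}} k(x)^{\times}/p^m \to SK_1(C)/p^m \to 0,
\]
and whose middle row is obtained by applying the $\mathrm{dlog}$ map vertically, landing in $H^0(K,W_m\Omega^2_{K,\log}) \to \bigoplus_{x\in C_{(0)}} H^0(k(x),W_m\Omega^1_{k(x),\log})$. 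By the Bloch--Gabber--Kato theorem, $\mathrm{dlog}\colon K_n^M(F)/p^m \xrightarrow{\cong} W_m\Omega^n_{F,\log}$ for every field $F$ of characteristic $p$; since $K_2 = K_2^M$ for fields, both vertical $\mathrm{dlog}$ arrows are isomorphisms.

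Next, I would splice in Shiho's Gysin isomorphism $H^0(k(x),W_m\Omega^1_{k(x),\log}) \xrightarrow{\cong} H^1_x(C,W_m\Omega^2_{C,\log})$ from Corollary~\ref{cor:PP-II} and then invoke the localization long exact sequence
\[
H^0(K,W_m\Omega^2_{K,\log}) \to \bigoplus_{x\in C_{(0)}} H^1_x(C,W_m\Omega^2_{C,\log}) \to H^1(C,W_m\Omega^2_{C,\log}).
\]
Combined with the isomorphism $\theta'_C \colon H^1(C,W_m\Omega^2_{C,\log}) \xrightarrow{\cong} \pi_1^{\ab}(C)/p^m$ of Corollary~\ref{cor:PP-II}, a diagram chase exactly as in the proof of Theorem~\ref{thm:Res-inj-mod-n} yields a unique dashed arrow $SK_1(C)/p^m \to H^1(C,W_m\Omega^2_{C,\log})$ that is \emph{injective}.

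Finally, to identify this injection with $\rho_{C,p^m}$, I would use the second assertion of Corollary~\ref{cor:PP-II}: for each $x\in C_{(0)}$, the composite $k(x)^{\times}/p^m \xrightarrow{\mathrm{dlog}} H^0(k(x),W_m\Omega^1_{k(x),\log}) \to H^1(C,W_m\Omega^2_{C,\log}) \xrightarrow{\theta'_C} \pi_1^{\ab}(C)/p^m$ coincides with the composition of the local reciprocity map $\rho_x$ with $\pi_1^{\ab}(k(x))/p^m \to \pi_1^{\ab}(C)/p^m$. Since $\rho_{C,p^m}$ is defined as the common extension of these local reciprocity maps, the dashed arrow composed with $\theta'_C$ is precisely $\rho_{C,p^m}$, hence $\rho_{C,p^m}$ is injective.

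The main obstacle I expect is the commutativity of the upper-left square, i.e.\ verifying that $\mathrm{dlog}$ intertwines the tame symbol $\partial\colon K_2(K)\to \bigoplus_x k(x)^{\times}$ with the residue map $H^0(K,W_m\Omega^2_{K,\log})\to \bigoplus_x H^0(k(x),W_m\Omega^1_{k(x),\log})$, as well as the compatibility of Shiho's Gysin isomorphism with the localization boundary. Both compatibilities are implicit in Kato's framework for logarithmic de Rham--Witt sheaves, but they need to be recorded carefully; the generic smoothness hypothesis on $C$ enters precisely to ensure that Shiho's purity isomorphism and the perfect pairing \eqref{eqn:PP-II*-1} from \cite{KRS23} are available at every closed point of $C$.
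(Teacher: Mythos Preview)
Your proposal is correct and follows essentially the same route as the paper: the same diagram, the same use of the Bloch--Gabber--Kato isomorphism for the $\mathrm{dlog}$ maps, Shiho's Gysin isomorphism, the localization sequence, and the identification with $\rho_{C,p^m}$ via Corollary~\ref{cor:PP-II}. The one place where the paper goes further than your sketch is precisely the obstacle you flag: rather than appealing to ``Kato's framework'' for the commutativity of the left square, the paper verifies it by hand, passing to the henselization $R_x$ of $\mathcal{O}_{C,x}$, using Shiho's explicit description of the Gysin map $u \mapsto \mathrm{dlog}(t)\wedge\mathrm{dlog}(\widetilde{u})$ (with $t$ a uniformizer), and checking on the generators $(a_1,a_2)$ and $(t,a)$ of $K_2(K_x)$ that the tame symbol and the localization boundary agree after $\mathrm{dlog}$.
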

\begin{proof}
   As before, we let $K$ denote the field of function of $C$. We consider the following diagram of exact sequences. 
    \begin{equation} \label{eqn:Res-inj-mod-p*-1}
    \xymatrix@C.8pc{
        K_2(K)/p^m \ar[r] \ar[d]^-{{\rm d log}}_-{\cong}& \oplus_{x\in C_{(0)}} k(x)^{\times}/p^m \ar[r] \ar[d]^-{{\rm dlog}}_-{\cong} &  SK_1(C)/p^m \ar[r] 
        \ar@{-->}[dd]& 0\\
        H^0(K, W_m \Omega^2_{K, \log})  \ar@{=}[d]& \oplus_{x\in C_{(0)}}H^0(k(x), W_m \Omega^1_{k(x), \log}) \ar[d]^-{(\iota_x)_*}_-{\cong}& \\
     H^0(K, W_m \Omega^2_{K, \log}) \ar[r] & \oplus_{x\in C_{(0)}} 
     H^1_x(C, W_m \Omega^2_{C, \log}) \ar[r] & H^1(C, W_m \Omega^2_{C, \log}) \ar[d]^-{\theta'_C}_-{\cong}\\
         & &  \pi_1^{\ab}(C)/p^m,
        }
    \end{equation}
    where the top left and the top middle vertical arrows are dlog-maps. Let us first assume that the left square in \eqref{eqn:Res-inj-mod-p*-1} commutes. By \cite[Corollary 2.8]{Bloch-Kato}, the left vertical and the top middle vertical arrows are isomorphisms. The bottom middle vertical arrow is an isomorphism by Corollary \ref{cor:PP-II} and hence a diagram chase tells us that the dotted arrow exists and is injective. 
 
    But then we are done because by Corollary~\ref{cor:PP-II}, the composition of the dotted arrow with $\theta'_C$ is the same as the reciprocity map $\rho_{C,p^m} \colon SK_1(C)/p^m \to \pi_1^{\ab}(C)/p^m$. We are therefore left to show that the left square in \eqref{eqn:Res-inj-mod-p*-1} commutes. To see this, let $x\in C_{(0)}$ be a closed point of $C$. We then have to show that the following diagram commutes. 
     \begin{equation} \label{eqn:Res-inj-mod-p*-2}
    \xymatrix@C.8pc{
        K_2(K_x)/p^m \ar[r] \ar[d]^-{{\rm d log}}&  k(x)^{\times}/p^m  \ar[d]^-{{\rm dlog}} \\
        H^0(K_x, W_m \Omega^2_{K_x, \log})  \ar@{->>}[d]& H^0(k(x), W_m \Omega^1_{k(x), \log}) \ar[d]^-{(\iota_x)_*}& \\
     \frac{H^0(K_x, W_m \Omega^2_{K_x, \log})}{H^0(R_x, W_m \Omega^2_{R_x, \log})} \ar[r]^-{\cong} &      H^1_x(C, W_m \Omega^2_{C, \log}),
        }
    \end{equation}
    where $R_x$ is the henselization of $\mathcal{O}_{C,x}$ and $K_x$ is the function field of $R_x$. But this follows from the definition of the Gysin map (the right bottom vertical arrow) by \cite[Page~587]{Shiho}. Indeed, by the loc. cit., the composition of the right vertical arrows with the inverse of the bottom horizontal arrow (that is an isomorphism) is the map $k(x)^{\times}/p^m  \to  \frac{H^0(K_x, W_m \Omega^2_{K_x, \log})}{H^0(R_x, W_m \Omega^2_{R_x, \log})}$ so that  ${u} \mapsto {\rm dlog}(t) \wedge {\rm dlog}(\widetilde{u})$, where  $\widetilde{u}\in R_x \subset K_x$ is a lift of $u\in k(x)^{\times}$ and $t$ is a unifomalizer of the regular local ring $R_x$. Moreover, the Milnor $K$-group $K_2(K_x)$ is generated by elements of the form $(a_1, a_2)$ and $(t, a)$, where $a_i, a \in R_x^{\times}$. Note that the elements of the form $(a_1, a_2)$ (with $a_i\in R_x^{\times}$) go to zero by both the top horizontal arrow and the composition of the left vertical arrows. By the definition of the left top vertical arrow (the ${\rm dlog}$-map), it therefore suffices to show that 
    the top horizontal arrow maps $(t, a)$ to $\overline{a} \in k(x)^*/p^m$ for all $a\in R_x^{\times}$. But this is a well-known property of the boundary map of Milnor $K$-groups (for example, see \cite[Chapter~III, Lemma~6.3]{Wei-Kbook}). This completes the proof. 
\end{proof}

\subsection{Proof of Theorem~\ref{thm:CF-Reg}} \label{sec:Proof-CFT}
We let the notations be as in Theorem~\ref{thm:CF-Reg}. 
\begin{enumerate}
    \item 
 We look at the commutative diagram
 	\begin{equation*} \label{eqn:Struc-geom-*1.7}
	\xymatrix@C.8pc{
0 \ar[r]  &	\Ker(\rho_X) \ar[r] \ar[d]^-{\alpha} & SK_{1}(X) \ar[r]^-{\rho_X} \ar[d]^-{\beta} & \pi^{\ab}_1(X) \ar[d]^-{\gamma}& \\
	0 \ar[r] &\varprojlim_{n \in \mathbb{L} } \Ker(\rho_{X,n}) \ar[r] &  \varprojlim_{n \in \mathbb{L}}  SK_{1}(X)/n  \ar[r]& \varprojlim_{n \in \mathbb{L} } \pi^{\ab}_1(X)/n,
	}
\end{equation*}
where the bottom row is induced from the top row by  $\mathbb{L}$-completion. Here, we denote by $\mathbb{L}$ the set of all natural numbers $n$ with $(n, p) = 1$. We apply \cite[Lemma 7.7]{Jannsen-Saito} with $A = SK_{1}(X)$ and $B_{n} = \pi^{\ab}_1(X)/n$, and use Theorem \ref{thm:A} and Theorem \ref{thm:Res-inj-mod-n} to conclude that the group $\Ker(\beta)$ is $p'$-divisible. Since $\Ker(\gamma)$ is uniquely $p'$-divisible, it follows that  $\Ker(\rho_X)$
is a $p'$-divisible group. 

Furthermore, if $X$ is generically smooth, we take $\mathbb{L} = \mathbb{N}$ and apply Theorems \ref{thm:Res-inj-mod-n}  and \ref{thm:Res-inj-mod-p} to conclude that $\Ker(\rho_X)$ is the maximal divisible subgroup of $SK_{1}(X)$. Indeed, the theorems imply that the map $\alpha$ is
zero. Since $\mathbb{L} = \mathbb{N}$, the map $\gamma$ is an isomorphism. In particular, by \cite[Lemma 7.7]{Jannsen-Saito}, the subgroup $\Ker(\rho_X) = \Ker(\alpha) = \Ker(\beta) = \bigcap_{n=1}^\infty n\ SK_{1}(X)$ is the maximal divisible subgroup of $SK_{1}(X)$.

\item Consider the map  $\rho_{X,0} \colon V(X/k) \to \pi^{\ab}_1(X)^{{\rm geom}}$.  Since any map from a divisible group to a profinite abelian group is zero, the result follows from combining Theorems \ref{thm:A} and \ref{thm:B}.

\item We now consider the following commutative diagram. 
\begin{equation*} \label{eqn:Struc-geom-*1.8}
	\xymatrix@C.8pc{
0 \ar[r]  &	V(X/k) \ar[r] \ar[d]^-{\rho_{X,0}} & SK_{1}(X) \ar[r]^-{N_X} \ar[d]^-{\rho_X} & k^{\times} \ar[d]^-{\rho_k}& \\
	0 \ar[r] & \pi^{\ab}_{1}(X)^{\geom} \ar[r] &  \pi^{\ab}_{1}(X)  \ar[r]& G^{\ab}_{k}.
	}
\end{equation*}
Since the reciprocity map $\rho_k$ is injective, we have the following short exact sequence. 
\begin{equation*}
0 \to \Coker(\rho_{X,0}) \to \Coker(\rho_{X}) \to \Coker(\rho_{k} \circ N_X) . 
\end{equation*}
 Since the map $N_X$ has finite index and the group $\Coker(\rho_k)$ has no torsion, it follows that the group $\Coker(\rho_{k} \circ N_X)_{\tor}$ is finite. It therefore suffices to show that the group $\Coker(\rho_{X,0})_{\tor}$ is finite. Since the image of the map $\rho_{X,0}$ is finite by part (2), we have a surjection $\pi^{\ab}_{1}(X)_{\tor}^{\geom} \twoheadrightarrow \Coker(\rho_{X,0})_{\tor}$. We are now done because $\pi^{\ab}_{1}(X)_{\tor}^{\geom} \hookrightarrow \pi^{\ab}_{1}(X)_{\tor} $ and the later group is finite by Theorem~\ref{thm:A}. 
\end{enumerate}
This completes the proof of the theorem. \qed

\section*{Acknowledgements}

Rathore would like to thank IMSc, Chennai for the invitation in February 2024. A significant part of this manuscript was completed during Gupta's visit to HRI, Prayagraj in June 2024. He extends his gratitude to the institute for their hospitality during his visit.

\end{document}